\renewcommand*{\backref}[1]{}
\renewcommand*{\backrefalt}[4]{%
    \ifcase #1 (Not cited.)%
    \or        (Cited on page~#2.)%
    \else      (Cited on pages~#2.)%
    \fi}
\definecolor{dkgreen}{rgb}{0,0.6,0}
\definecolor{gray}{rgb}{0.5,0.5,0.5}
\definecolor{mauve}{rgb}{0.58,0,0.82}
\tiny\color{gray},
\def\@tocline#1#2#3#4#5#6#7{\relax
  \ifnum #1>\c@tocdepth 
  \else
    \par \addpenalty\@secpenalty\addvspace{#2}%
    \begingroup \hyphenpenalty\@M
    \@ifempty{#4}{%
      \@tempdima\csname r@tocindent\number#1\endcsname\relax
    }{%
      \@tempdima#4\relax
    }%
    \parindent\z@ \leftskip#3\relax \advance\leftskip\@tempdima\relax
    \rightskip\@pnumwidth plus4em \parfillskip-\@pnumwidth
    #5\leavevmode\hskip-\@tempdima
      \ifcase #1
       \or\or \hskip 1em \or \hskip 2em \else \hskip 3em \fi%
      #6\nobreak\relax
    \hfill\hbox to\@pnumwidth{\@tocpagenum{#7}}\par
    \nobreak
    \endgroup
  \fi}
\DeclareMathOperator{\im}{im}
\DeclareMathOperator{\imaginary}{\mathfrak{Im}}
\newtheorem{lemma}{Lemma}[section]
\newtheorem{corollary}[lemma]{Corollary}
\newtheorem{theorem}[lemma]{Theorem}
\newtheorem{prop}[lemma]{Proposition}
\theoremstyle{definition}
\newtheorem{definition}[lemma]{Definition}
\newtheorem{example}[lemma]{Example}
\newtheorem{remark}[lemma]{Remark}
\numberwithin{equation}{section} 
\newcommand{\Z}{\mathbb{Z}}
\newcommand{\M}{\mathfrak{M}}
\renewcommand{\L}{\mathcal{L}}
\newcommand{\CP}{\mathbf{C}P}
\newcommand{\RP}{\mathbf{R}P}
\renewcommand{\L}{\mathcal{L}}
\newcommand{\C}{\mathbb{C}}
\newcommand{\e}{\mathrm{e}}
\newcommand{\R}{\mathbb{R}}
\newcommand{\D}{\slashed{\mathfrak D}}
\renewcommand{\S}{\mathbb{S}}
\renewcommand{\ss}{\mathfrak{s}}
\newcommand{\SO}{\mathrm{SO}}
\newcommand{\SU}{\mathrm{SU}}
\renewcommand{\H}{\mathcal{H}}
\title{A surgery formula for Seiberg-Witten invariants}
\author{Haochen Qiu}
\begin{document}

\maketitle

\begin{abstract}
    We prove a surgery formula for the ordinary Seiberg-Witten invariants of smooth $4$-manifolds with $b_1 =1$. Our formula expresses the Seiberg-Witten invariants of the manifold after the surgery, in terms of the original Seiberg-Witten moduli space cut down by a cohomology class in the configuration space. This formula can be used to find exotic smooth structures on nonsimply connected $4$-manifolds, and gives a lower bound of the genus of an embedding surface in nonsimply connected $4$-manifolds. In forthcoming work, we will extend these results to give a surgery formula for the families Seiberg-Witten invariants.
\end{abstract}

\tableofcontents
\section{Introduction}

Let $\gamma$ be a loop in a closed smooth $4$-manifold $X$. A surgery along $\gamma$ is removing a neighborhood of $\gamma$ with a framing (trivialization of the normal bundle), and gluing back a copy of $D^2\times S^2$. For example, a surgery along $ S^1\times\{pt\} \subset S^1\times S^3 $ would produce $S^4$, while a surgery along a trivial loop on $S^4$ may produce $S^2\times S^2$ or $\CP^2 \# \overline{\CP^2}$.  Such surgery establishes relations between lots of $4$-manifolds. This work describes how a surgery can preserve exotic phenomena.

The tool we use comes from the Seiberg-Witten equations introduced by Seiberg and Witten (\cite{Seiberg1994ElectricM}). The input of the equation for $X$ includes a $\text{Spin}^c$-structure (they are related to elements in $H^2(X;\Z)$), a $U(1)$-connection, and a ``spinor''. The set of equivalence classes of $U(1)$-connections and spinors under the ``gauge group'' $\text{Map}(X,S^1)$ is called the \textbf{configuration space} (denoted by $\mathcal{B}$), which is a fiber bundle with fiber $\CP^\infty$ and base a torus $T^{b_1(X)}$. The Seiberg-Witten equations depend on a tuple consisting of a metric and a perturbing $2$-form, which is called a \textbf{parameter}. The solution of this equation with a suitable parameter is a smooth compact manifold in the configuration space. This manifold is called the Seiberg-Witten \textbf{moduli space} (denoted by $\M$). Its dimension is computed by the Atiyah-Singer index theorem, and if it is even, we can integrate a poduct of $c_1(\CP^\infty)$ on the moduli space and get the so-called \textbf{Seiberg-Witten invariant} (when the dimension is $0$, the integral just counts the points with signs). This is an invariant under diffeomorphism. Many examples of exotic $4$-manifolds were found by computing this invariant for two homeomorphic manifolds.

In this paper, we generalize $SW$ to $1$-dimensional moduli space, such that the new invariant (we call it the cut-down invariant $SW^\Theta$) can detect exotic phenomena. Then we prove a surgery formula that shows how a surgery relates $SW^\Theta$ to $SW$.

\subsection{Main result}
For a $4$-manifold $X$ with $H^1(X;\Z)=\Z$, suppose $\ss$ is a $\text{Spin}^c$-structure such that $\dim \M(X,\ss)=1$. That is,
\[\label{equ:dimAssumption}
\tag{\textbf{Dim}}
\frac{1}{4}\left(\int_{X} c_1(\L)^2 - 2(\chi(X)+3\sigma(X))\right) =1,
\]
where $\L$ is the determinant line bundle of $\ss$, $\chi(X)$ is the Euler characteristic of $X$ and $\sigma(X)$ is the signature of $X$. The configuration space is homotopy equivalent to a bundle over $S^1$ with fiber $ \CP^\infty$. Let $\Theta$ be the pullback of a generator of $H^1(S^1;\Z)$. Define the cut-down Seiberg-Witten invariant $SW^\Theta(X,\ss)$ be the integral of $\Theta$ on $\M(X,\ss) $, that is,
\[
SW^\Theta(X,\ss) :=  \langle\Theta,[ \M(X,\ss) ] \rangle.
\]
We prove that this invariant detects exotic smooth structures.

Let $\gamma \subset X$ be a loop that represents a generator of $H_1(X;\Z)/\text{torsion} = \Z$. Suppose a surgery along $\gamma$ produces $X'$. We show that there is a unique $\text{spin}^c$ structure $\ss'$ of $X'$ that coincides with $\ss$ on their common part (see Section \ref{subsection:Spin}). Under the dimension assumption (\ref{equ:dimAssumption}), The SW moduli space $\M(X, \ss)$ of $X$ would be $1$-dimensional. Since the surgery kills the first cohomology group, $H^1(X';\Z)=0$ and therefore $\dim \M(X',\ss')=0$. Hence $SW(X',\ss')$ is defined by counting points in $ \M(X',\ss)$. The main theorem of this project is 
\begin{theorem}
$SW^\Theta(X,\ss)=SW(X',\ss')$.
\end{theorem}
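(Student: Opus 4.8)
The plan is to prove the theorem by stretching the neck of the surgery region and realizing both invariants as the same signed count of Seiberg--Witten solutions on the piece common to $X$ and $X'$. Write $N=S^1\times S^2$ and $Z = X\setminus(S^1\times D^3)=X'\setminus(D^2\times S^2)$, so that $X = Z\cup_N(S^1\times D^3)$ and $X' = Z\cup_N(D^2\times S^2)$, with $\gamma$ isotopic to the core $S^1\times\{0\}$. I would fix a product metric on $N$ built from a small round $S^2$, extend it to cylindrical-end metrics on $Z$, on $S^1\times D^3$ and on $D^2\times S^2$ (the caps can be taken to have positive scalar curvature, as the $S^2$-factor is small), and insert a long neck $[-T,T]\times N$. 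As $T\to\infty$, the standard compactness and gluing package for the Seiberg--Witten equations on cylindrical-end $4$-manifolds identifies, for a generic parameter,
\[
\M(X,\ss)\cong \M(Z)\times_{\M(N)}\M(S^1\times D^3),\qquad \M(X',\ss')\cong \M(Z)\times_{\M(N)}\M(D^2\times S^2),
\]
where $\M(Z)$, $\M(S^1\times D^3)$, $\M(D^2\times S^2)$ are moduli spaces of finite-energy solutions on the cylindrical-end pieces and the maps to $\M(N)$ record the asymptotic limit along the end.

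Next I would compute the three local pieces. Positive scalar curvature on $N$ and on the two caps forces, via the Weitzenb\"ock formula, every solution to be reducible, and since $c_1(\ss)$ pairs trivially with $[S^2]$ the reducible loci are Jacobian tori: $\M(N)\cong H^1(N;\R)/H^1(N;\Z)\cong S^1$, a Morse--Bott critical circle (the Dirac direction of the Hessian being killed by positive scalar curvature); $\M(S^1\times D^3)\cong H^1(S^1\times D^3;\R)/H^1(S^1\times D^3;\Z)\cong S^1$; and $\M(D^2\times S^2)$ is a single point because $H^1(D^2\times S^2;\R)=0$. Since the restriction $H^1(S^1\times D^3)\to H^1(N)$ is an isomorphism, the limit map $\M(S^1\times D^3)\to \M(N)$ is a diffeomorphism, so the first fiber product collapses to $\M(X,\ss)\cong\M(Z)$, and the resulting map $f\colon \M(X,\ss)\to S^1$ is, up to homotopy, the composite of $\M(X,\ss)\hookrightarrow\mathcal{B}$ with the bundle projection $\mathcal{B}\to S^1$ — both record the holonomy around $\gamma$. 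Hence $\Theta$ restricts on $\M(X,\ss)$ to $f^*$ of a generator of $H^1(S^1;\Z)$, so $\langle\Theta,[\M(X,\ss)]\rangle=\deg f$. On the other side, the circle $S^1\times\{\mathrm{pt}\}\subset N$ bounds a disk in $D^2\times S^2$, so $\M(D^2\times S^2)$ maps to the single point $p_0\in S^1$ of trivial holonomy, and the second fiber product identifies $\M(X',\ss')$ with $f^{-1}(p_0)$.

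Assembling, $SW^\Theta(X,\ss)=\deg f$, which for a regular value $p_0$ is the signed count of $f^{-1}(p_0)$, while $SW(X',\ss')$ is the signed count of $\M(X',\ss')=f^{-1}(p_0)$. The remaining point is to match orientations: the homology orientation of $X'$ is an orientation of $H^2_+(X';\R)$, which corresponds under the decomposition to one of $H^2_+(X;\R)$, and this together with the $\Theta$-direction in $H^1(X;\R)\cong\R$ is precisely the data orienting the $1$-manifold $\M(X,\ss)$; chasing these through the gluing diffeomorphism shows that the sign attached to a point of $f^{-1}(p_0)$ as a point of $\M(X',\ss')$ equals its local contribution to $\deg f$. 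This yields $SW^\Theta(X,\ss)=SW(X',\ss')$.

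I expect the main obstacle to be the gluing/compactness step. Because the limiting solutions on the neck form a positive-dimensional (Morse--Bott) critical circle rather than an isolated nondegenerate solution, the fiber-product descriptions require a Morse--Bott gluing theorem together with the exponential-decay estimates that prevent energy from leaking into the neck — here positive scalar curvature on $N$, which rules out nonconstant finite-energy solutions on $\R\times N$, is essential — and one must verify transversality for the cut-down pieces $\M(Z)$ using only parameters that extend over $X$ and $X'$. The orientation bookkeeping, while routine in principle, is the other place demanding genuine care, since it is what fixes the sign in the identity.
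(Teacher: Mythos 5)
Your outline follows the same route as the paper: decompose along $N=S^1\times S^2$, use Nicolaescu's cylindrical-end gluing theory to write $\M(X)\cong\M(X_0)\times_{\M(N)}\M(S^1\times D^3)$ and $\M(X')\cong\M(X_0)\times_{\M(N)}\M(D^2\times S^2)$, compute the caps via positive scalar curvature (circle of reducibles, resp.\ a point), and compare the winding degree of $\partial_\infty\colon\M(X_0)\to \M(N)\cong S^1$ against $\Theta$ with the signed count of a fiber.

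The one place your plan is too optimistic is the appeal to ``the standard compactness and gluing package'' yielding the fiber products ``for a generic parameter.'' That identification requires the obstruction spaces of the cylindrical-end pieces to vanish, and for the cap $D^2\times S^2$ they do \emph{not}: $H^2(F(\hat{\mathsf{C}}_2))$ is one-dimensional, because $L^2_{\mathrm{top}}\neq 0$ (the restriction $H^2(D^2\times S^2;\Z)\to H^2(S^1\times S^2;\Z)$ is an isomorphism), and no perturbation supported on a cap with positive scalar curvature can remove it, since all solutions there are forced to be reducible. The paper therefore has to trace this obstruction through Nicolaescu's asymptotic obstruction diagram \ref{equ:3O} and show that it is killed by the boundary difference map $\Delta_-^c$ against $L_1^-+L_2^-\cong T_{\mathsf{C}_\infty}\M(N)$, so that the \emph{global} obstruction on the glued $X'$ still vanishes for $r$ large; this is Proposition \ref{prop:trivialObs}, and it is the genuinely nonstandard step your plan does not anticipate. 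Your two acknowledged worries --- Morse--Bott degeneracy of the critical circle on $N$, and transversality on $X_0$ with perturbations supported there --- are real but are exactly the hypotheses the standard machinery (and Propositions \ref{prop:noncompactTransversality-general}--\ref{prop:noncompactTransversality}) is built to handle; the obstruction cancellation on the $D^2\times S^2$ side is where the extra argument lives.
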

Geometrically, this means that cutting a $1$-handle of a manifold, corresponds to cutting its Seiberg-Witten moduli space.

This is proved by applying the classical gluing result in Nicolaescu's book \cite{Nicolaescu2000NotesOS} twice. Let $S^1 \times D^3$ be a neighborhood of $\gamma$, and let $X_0 = X - S^1 \times D^3$. Then gluing $X_0$ with $S^1 \times D^3$ produces $X$, while gluing $X_0$ with $D^2 \times S^2$ produces $X'$. The classical gluing result says, if a certain ``obstruction space'' is trivial on $X_0$, then $\M(X)$ is the fiber product $\M(X_0)\times_{\M(S^1 \times S^2)} \M(S^1 \times D^3)$ while $\M(X')$ is the fiber product $\M(X_0)\times_{\M(S^1 \times S^2)} \M(D^2 \times S^2)$. We prove that since $\gamma$ is homologically nontrivial, for generic parameters such obstruction space is $0$-dimensional. Furthermore, we can choose suitable metrics such that $\M(S^1 \times D^3) \to \M(S^1 \times S^2)$ is the identity map of a circle, and $\M(D^2 \times S^2) \to \M(S^1 \times S^2)$ is the inclusion of one point into a circle. Hence if we cut $\M(X')$, we get $\M(X)$, and the theorem follows.

\subsection{Applications}
As lots of exotic smooth structures are detected by $SW$, we can now generalize those results to nonsimply connected manifolds, for example:
\begin{corollary}
$E(n) \# (S^1 \times S^3)$ admits infinitely many exotic smooth structures.
\end{corollary}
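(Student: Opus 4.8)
The plan is to feed Fintushel--Stern's knot surgery manifolds through the Theorem; I take $n \ge 2$ (the case $n = 1$ is identical using the $b^+ = 1$ small-perturbation invariant). Fix an infinite family of knots $K_1, K_2, \dots \subset S^3$ with pairwise distinct symmetrized Alexander polynomials $\Delta_{K_m}(t)$, for instance the twist knots, and let $Z_m := E(n)_{K_m}$ be the result of knot surgery on a regular fiber $F$ of the elliptic fibration. Each $Z_m$ is simply connected with the intersection form of $E(n)$, hence homeomorphic to $E(n)$ by Freedman's theorem, and its total Seiberg--Witten invariant $\sum_{\ss'} SW(Z_m,\ss')\exp(\ss') \in \Z[H^2(Z_m;\Z)]$ equals $SW_{E(n)} \cdot \Delta_{K_m}(\exp(2F))$; in particular, by Fintushel--Stern, the $Z_m$ are pairwise non-diffeomorphic and this is detected by their Seiberg--Witten invariants. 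Set $X_m := Z_m \# (S^1 \times S^3)$. The connected sum of a homeomorphism $Z_m \cong E(n)$ with $\mathrm{id}_{S^1 \times S^3}$ shows that every $X_m$ is homeomorphic to $E(n) \# (S^1 \times S^3)$, so it suffices to prove that the $X_m$ fall into infinitely many diffeomorphism classes.

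Next I would realize $Z_m$ as a surgery of $X_m$. Let $\gamma = S^1 \times \{pt\}$ lie in the $S^1 \times S^3$ summand; it represents a generator of $H_1(X_m;\Z)/\text{torsion} = \Z$, as required in the Theorem. Surgery along $\gamma$ with the product framing removes a copy of $S^1 \times D^3$ and glues in $D^2 \times S^2$, so the summand $S^1 \times S^3$ is replaced by $(S^1 \times D^3) \cup_{S^1 \times S^2} (D^2 \times S^2) = \partial(D^2 \times D^3) = S^4$, and the surgered manifold is $Z_m \# S^4 = Z_m$. Since $H^2(S^1 \times S^3;\Z) = 0$, the connected sum changes neither $H^2$ nor the intersection form, and restriction to $X_0 = X_m \setminus (S^1 \times D^3)$ identifies $\text{Spin}^c$ structures on $X_m$ with those on $Z_m$; hence the correspondence $\ss \mapsto \ss'$ of the Theorem is a bijection $\text{Spin}^c(X_m) \to \text{Spin}^c(Z_m)$, affine over an isometry $H^2(X_m;\Z) \cong H^2(Z_m;\Z)$. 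Moreover $\chi(X_m) = \chi(Z_m) - 2$ and $\sigma(X_m) = \sigma(Z_m)$, so the index formula (\ref{equ:dimAssumption}) gives $\dim \M(X_m,\ss) = \dim \M(Z_m,\ss') + 1$. All basic classes of $Z_m$ are multiples of $F$ and $Z_m$ has simple type, so $\dim \M(Z_m,\ss') = 0$ for every $\ss'$ supporting the Seiberg--Witten invariant, and the matching $\ss$ on $X_m$ satisfy the dimension assumption (\ref{equ:dimAssumption}); the Theorem then gives $SW^\Theta(X_m,\ss) = SW(Z_m,\ss')$ for all of them.

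Finally, argue by contradiction: if $\phi \colon X_m \to X_{m'}$ were a diffeomorphism, then diffeomorphism invariance of $SW^\Theta$ gives $SW^\Theta(X_{m'},\phi_*\ss) = SW^\Theta(X_m,\ss)$ for all $\ss$, with $\phi_*$ an isometry of intersection forms. Composing with the two bijections above and applying the Theorem on both sides produces an affine, form-preserving bijection $\text{Spin}^c(Z_m) \to \text{Spin}^c(Z_{m'})$ intertwining the functions $\ss' \mapsto SW(Z_m,\ss')$ and $\ss' \mapsto SW(Z_{m'},\ss')$; equivalently, the total Seiberg--Witten invariants of $Z_m$ and $Z_{m'}$ agree up to an automorphism of $\Z[H^2]$ induced by an isometry. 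By the Fintushel--Stern formula this forces $\Delta_{K_m} = \Delta_{K_{m'}}$ up to units, contradicting the choice of the $K_m$. Hence the $X_m$ are pairwise non-diffeomorphic and all homeomorphic to $E(n) \# (S^1 \times S^3)$, proving the corollary.

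The substantive inputs are imported --- Fintushel--Stern's knot surgery theorem, and from this paper the Theorem together with the diffeomorphism invariance of $SW^\Theta$ stated in the introduction --- so what one actually checks by hand is bookkeeping: that the product framing on $\gamma$ produces $Z_m \# S^4$, that the $\text{Spin}^c$ correspondence is an affine bijection shifting the index by exactly one, and that a homeomorphism of the simply connected summands survives connected sum with $S^1 \times S^3$. The only step I would expect to require genuine care is the $b_1 = 1$ chamber and reducibles subtlety behind the very definition of $SW^\Theta$; here it is harmless, since $b^+(X_m) = b^+(E(n)) = 2n - 1 \ge 3$, so there is a single chamber and no wall-crossing correction is needed.
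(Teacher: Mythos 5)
Your proposal is correct and takes essentially the same route as the paper: Fintushel--Stern knot surgery on a regular fiber of $E(n)$ produces infinitely many simply connected manifolds $Z_m$ homeomorphic to $E(n)$ with pairwise distinguishable $SW$ data, and the surgery formula $SW^\Theta(Z_m\#(S^1\times S^3),\ss)=SW(Z_m,\ss')$ together with the diffeomorphism invariance of $SW^\Theta$ shows the connected sums $Z_m\#(S^1\times S^3)$ are pairwise non-diffeomorphic. The only organizational difference is that the paper isolates the non-diffeomorphism criterion as a standalone statement (Theorem~\ref{thm:not-diffeomorphic}, phrased via the existence of a $\text{spin}^c$ structure $\ss_1$ on $X_1$ whose invariant is not attained on $X_2$) and then feeds the largest Alexander coefficient into it, whereas you run the contradiction directly at the level of the total $SW$ polynomial and an isometry of $H^2$; the latter needs the (standard but unspoken) support argument showing any such isometry must send $F\mapsto\pm F$ before symmetry of the Alexander polynomial finishes it, which the paper's formulation sidesteps by working with the scalar $\max_n SW(X_K,\ss+n[T])$.
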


The surgery formula has another kind of application. The generalized adjunction formula gives a lower bound of the genus of a surface in a smooth $4$-manifold $X$, by Seiberg-Witten basic classes. A characteristic element $K\in H^2(X;\Z)$ is a Seiberg-Witten basic class of $X$ if $SW(X,K)\neq 0$. We can generalize this concept and the adjunction formula to odd dimensional moduli space. 
\begin{corollary}
Suppose that $\Sigma$ is an embedded, oriented, connected, homologically nontrivial surface in $X$ with genus $g(\Sigma)$ and self-intersection $[\Sigma]^2 \ge 0$. Suppose that $H^1(X;\Z) \cong \Z$, and that $K\in H^2(X;\Z)$ is a characteristic element such that:
\begin{enumerate}
\item[(1)] $SW(X,K)\neq 0$ for $\dim \M(X,K) =2n$,
\item[(2)]  or $SW^\Theta(X,K)\neq 0$ for $\dim \M(X,K) =2n+1$.
\end{enumerate}
Then we have
\[
2g(\Sigma) -2 \ge [\Sigma]^2 + |K([\Sigma])|.
\]
\end{corollary}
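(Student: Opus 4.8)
The plan is to reduce the statement to the classical generalized adjunction inequality, which handles the case $b_1(X)=0$ (equivalently, the case where the moduli space has even dimension and $SW$ is the usual count), and then deduce the new odd-dimensional case via the surgery formula (Theorem 1.1). In the even-dimensional case (1), the argument is essentially the standard one: take the surface $\Sigma$ with $[\Sigma]^2\ge 0$, form the blow-up or directly use the fact that a neighborhood of $\Sigma$ can be removed and replaced, and exploit the nonvanishing of the Seiberg-Witten invariant together with a curvature/energy estimate on the solutions concentrated near $\Sigma$. Concretely, one stretches the metric along the boundary $\partial(\nu\Sigma)$ of a tubular neighborhood and uses that a nonzero $SW$ invariant forces solutions to exist on $X\setminus\nu\Sigma$; the Weitzenböck formula then bounds the integral of the scalar curvature from below by an expression involving $K([\Sigma])$ and $[\Sigma]^2$, giving the adjunction inequality exactly as in the work of Kronheimer–Mrowka, Morgan–Szabó–Taubes, and as presented in Nicolaescu's book.

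For the odd-dimensional case (2), I would first arrange that $\Sigma$ is disjoint from the loop $\gamma$ representing the generator of $H_1(X;\Z)/\mathrm{torsion}$: since $\gamma$ is one-dimensional and $\Sigma$ is a surface in a $4$-manifold, after a small isotopy they are disjoint, and moreover we may assume the tubular neighborhood $\nu\gamma = S^1\times D^3$ used to define the surgery is disjoint from $\nu\Sigma$. Performing the surgery along $\gamma$ then produces $X'$ with $H^1(X';\Z)=0$, and $\Sigma$ sits inside $X'$ with the same self-intersection number (the surgery is supported away from $\Sigma$), while $[\Sigma]$ and its pairing with the cohomology class are unchanged — here I must check that the class $K\in H^2(X;\Z)$ corresponds under the surgery to the class $K'$ determining $\ss'$, and that $K'([\Sigma])=K([\Sigma])$, which holds because $\Sigma$ lies in the common piece $X_0 = X\setminus \nu\gamma$ and the identification of $\mathrm{Spin}^c$-structures in Section \ref{subsection:Spin} restricts to the identity on $X_0$. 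The dimension assumption (\ref{equ:dimAssumption}) for $(X,K)$ with $\dim\M(X,K)=2n+1$ forces $\dim\M(X',K')=2n$, so $SW(X',K')$ is the ordinary even-dimensional invariant.

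Now apply Theorem 1.1 in the form $SW^\Theta(X,K)=SW(X',K')$: the hypothesis $SW^\Theta(X,K)\neq 0$ gives $SW(X',K')\neq 0$, so $K'$ is a Seiberg–Witten basic class of $X'$ in the even-dimensional sense, and case (1) — already established for $X'$ — yields $2g(\Sigma)-2\ge [\Sigma]^2 + |K'([\Sigma])| = [\Sigma]^2 + |K([\Sigma])|$, which is the desired bound. The main obstacle is the bookkeeping in the reduction: one must verify carefully that performing the surgery away from $\Sigma$ genuinely preserves all the relevant numerical data ($g(\Sigma)$, $[\Sigma]^2$, $K([\Sigma])$, and the characteristic/dimension conditions) and that the hypothesis $[\Sigma]^2\ge 0$ together with the homological nontriviality of $\Sigma$ survives in $X'$ — in particular that $[\Sigma]\neq 0$ in $H_2(X';\Z)$, which follows since the inclusion $X_0\hookrightarrow X'$ is used on the same chain and the surgered region contributes no relation killing $[\Sigma]$ (a Mayer–Vietoris check). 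A secondary subtlety is the case $n=0$ in part (2), where $\dim\M(X',K')=0$ and the classical adjunction inequality must be applied at the bottom dimension; this is already covered by the standard statement, so no extra work is needed once the reduction is in place.
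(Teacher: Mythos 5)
Your proposal is correct and follows essentially the same route as the paper's proof: case (1) is cited directly from the ordinary adjunction formula, and case (2) is reduced to it by surgering along $\gamma$ disjoint from $\Sigma$, identifying $K'$ via the Mayer--Vietoris isomorphism $H^2(X;\Z)\cong H^2(X';\Z)$, and applying Theorem \ref{main-thm} (together with Remark \ref{rem:high-dim-moduli}) to carry the nonvanishing of $SW^\Theta$ over to $SW(X',K')\neq 0$. One small imprecision in your write-up: case (1) does not require $b_1(X)=0$ --- the classical adjunction inequality is already valid for $b_1>0$ when the moduli space is even-dimensional and $SW$ is the usual cut-down count --- but this does not affect the argument, which is otherwise exactly the paper's.
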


This gives a sharper lower bound of the genus of an embedding surface in nonsimply connected $4$-manifolds than the adjunction formula, which has only case (1).

\subsection*{Acknowledgements}
The author wants to express gratitude to his advisor Daniel Ruberman for his suggestions and support throughout the preparation of this work.  The author also wants to thank Hokuto Konno for his suggestion regarding the references. This work is partially supported by NSF grant DMS-1952790.

\section{Setup for the $1$-surgery formula}
\subsection{$\text{Spin}^\C$ structure }\label{subsection:Spin}
The definition of the Seiberg-Witten moduli space depends on a choice of the $\text{Spin}^\C$ structure, so we first review the theory of the $\text{Spin}^\C$ structure. 
Definitions in this subsection can be found in section 1.4.2 and 2.4.1 of \cite{gompf19994}. We also provide some auxilary examples (Example \ref{example:spin} and Remark \ref{remark:spincline}). The main theorem in this subsection is Theorem \ref{thm:changeOfSpinc}. It deals with the change of $\text{Spin}^\C$ structures by a $1$-surgery.

To understand the $\text{Spin}^\C$ structure, we first review the theory of the $\text{spin}$ structure. 
\begin{definition}
    \[
        \text{Spin}(4) =  SU(2) \times SU(2) 
    \]
    is called the $\text{spin}$ group of dimension $4$.
\end{definition}
Note that, $\text{Spin}(4)$ is the connective double cover of $SO(4) = SU(2) \times SU(2) / \{\pm (I,I)\}$. 

\begin{remark}\label{remark:spin}
    Double covers of $X$ correspond to $H^1(X;\Z_2) =[X, \RP^\infty]$. The correspondence is given by the sphere bundle of pull back of the universal line 
    bundle (tautological line bundle over $\RP^\infty$). For $SO(4)$, $H^1(SO(4);\Z_2) =[SO(4), \RP^\infty] = \Z_2$. So the double covers of $SO(4)$ are charecterized by the homotopy class of the image of the nontrivial loop of $SO(4)$ in $\RP^\infty$. 
    If that loop is homotopic to a constant loop in $\RP^\infty$, then the corresponding double cover is $SO(4)\sqcup SO(4)$. 
    If that loop is homotopic to the $1$-cell of $\RP^\infty$, then the double cover is $\text{Spin}(4)$.
\end{remark}

\begin{definition}
    A $\text{spin}$ structure $\ss$ on a $4$-manifold $M$ is a principal $\text{Spin}(4)$-bundle 
    $P_{\text{Spin}(4)} \to M$, with a bundle map from $P_{\text{Spin}(4)}$ to the frame bundle $P_{SO(4)}$ of $M$, which restricts
    to the double cover $\rho: \text{Spin}(4) \to SO(4)$ on each fiber.
    \end{definition}

Note that $P_{\text{Spin}(4)}$ is a double cover of $P_{SO(4)}$, which restricts to the double cover $\rho: \text{Spin}(4) \to SO(4)$ on each fiber. By Remark \ref{remark:spin}, 
this corresponds to an element in $H^1(P_{SO(4)};\Z_2)=[P_{SO(4)}, \RP^\infty]$ which restricts to the nontrivial element in $H^1(SO(4);\Z_2)=[SO(4), \RP^\infty]$ on each fiber. 
From the Leray-Serre spectral sequence, we have the following exact sequence:
\[
    0\to H^1(M, \Z_2) \to H^1(P_{SO(4)}, \Z_2) \stackrel{i^*}{\to} H^1(SO(4), \Z_2) \stackrel{\delta}{\to} H^2(M, \Z_2).
\]
Here $\delta(1) = w_2(P_{SO(4)})$, and $i^*$ is the restriction map. By the discussion above, the set of $\text{spin}$ structures on $M$ 
is in one-to-one correspondence with $(i^*)^{-1}(1)$. When $\delta(1) = w_2(P_{SO(4)}) = 0$, $(i^*)^{-1}(1)$ is nonempty, 
and 
\begin{align*}
\#(i^*)^{-1}(1) &=\#(i^*)^{-1}(0)\\
& = \# \im (H^1(M, \Z_2) \to H^1(P_{SO(4)}, \Z_2)) \\
&= \# H^1(M, \Z_2) .
\end{align*}
So the set of $\text{spin}$ structures on $M$ 
is in noncanonical one-to-one correspondence with $H^1(M, \Z_2)$. 
When $\delta(1) = w_2(P_{SO(4)}) \neq 0$, $(i^*)^{-1}(1)$ is empty.

\begin{example}\label{example:spin}
    Let $M = \S^1 \times \R^3$. Then $w_2(TM) = 0$ and $H^1(M, \Z_2) = \Z_2$. Hence there are two $\text{spin}$ structures on $M$. 
    They are principal $\text{Spin}(4)$-bundles that cover the trivial bundle $P_{SO(4)} = M \times SO(4)$, and the covering maps are nontrivial on each fiber. 
    Namely, the preimage of the nontrivial loop of $SO(4)$ is $\S^1$, and the covering maps restrict to this preimage are both 
    \begin{align*}
    \S^1 &\stackrel{2}{\to} \S^1\\
     z &\mapsto z^2.
     \end{align*}
    These two $\text{spin}$ structures are distinguished by the covering maps on the $\S^1$ factor of $M$. They are nontrivial double cover 
    $\S^1 \stackrel{2}{\to} \S^1 \subset SO(4)$ and trivial double cover $\S^1\sqcup \S^1 {\to} \S^1 \subset SO(4)$, respectively. 
    
    We can construct these principal $\text{Spin}(4)$-bundles explicitly. Let $\{U_\alpha, U_\beta\}$ 
    be a good cover of $M$ such that $U_\alpha$ and $U_\beta$ are diffeomorphic to $\R \times \R^3$. Let $U_0 \sqcup U_1 = U_\alpha \cap U_\beta$. 
    Let $P_{SO(4)}$ be the frame bundle of $M$ with local trivialization on $\{U_\alpha, U_\beta\}$ and transition functions $g_i: U_i \to SO(4)$ for $i = 0,1$. 
    Fix $m_i \in U_i$ and a lift 
    \[\widetilde{g_i(m_i)} \in \text{Spin}(4)\] 
    for ${g_i}(m_i)$ respectively. Since $P_{\text{Spin}(4)} \to P_{SO(4)}$ is a fibration and $U_i$ is contractible, 
    we can lift $g_i$ to a map $\tilde{g_i}: U_i \to \text{Spin}(4)$ such that \[\tilde{g_i}(m_i) = \widetilde{g_i(m_i)}.\] 
    This gives the transition functions for a principal $\text{Spin}(4)$-bundle $P_{\text{Spin}(4)}$ over $M$ which is locally trivial on $\{U_\alpha, U_\beta\}$. 
    To construct another principal $\text{Spin}(4)$-bundle, we choose the same lift of $g_0(m_0)$ but a different lift of $g_1(m_1)$.

    For example, if 
    \[g_i(m) = [I,I] \in SO(4) = SU(2) \times SU(2) / \{\pm (I,I)\}\] 
    and 
    \[\tilde{g_i}(m) = (I,I)\in \text{Spin}(4) =  SU(2) \times SU(2)\]
    for any $i$ and $m\in U_i$, then the principal $\text{Spin}(4)$-bundle is trivial. For the loop $l = \S^1 \times \{0\} \times \{I\} \subset M \times  SO(4) =  P_{SO(4)}$, 
    the preimage of $l$ under the double cover $P_{\text{Spin}(4)} \to P_{SO(4)}$ is $\S^1\sqcup \S^1 \subset P_{\text{Spin}(4)}$. On the other hand, if
    \begin{align*}
        \tilde{g_0}(m) &= (I,I), m\in U_0\\
        \tilde{g_1}(m) &= (-I,-I), m\in U_1,
    \end{align*}
    then geometrically, when a particle runs along $l$, it's preimage under the double cover $P_{\text{Spin}(4)} \to P_{SO(4)}$ changes to another orbit when this particle passes $U_1$. Thus the preimage of $l$ is a single $\S^1 \subset P_{\text{Spin}(4)}$. 
    This example shows that the set of $\text{spin}$ structures on $M$ 
is in one-to-one correspondence with $H^1(M, \Z_2)$. Moreover, such correspondence is noncanonical: There is not a priori choice of the lift of $g_i(m_i)$.

\end{example}

Now we introduce the $\text{spin}^\C$ structure.

\begin{definition}
    \[
        \text{Spin}^\C(4) = \{(A,B)\in U(2) \times U(2) ; \text{det}(A) = \text{det}(B) \}
    \]
    is called the $\text{spin}^\C$ group of dimension $4$. 
\end{definition}

Note that, $\text{Spin}^\C(4)$ is isomorphic to $S^1 \times SU(2) \times SU(2) / \{\pm (1,I,I)\}$, while $SO(4)$ is isomorphic to 
$ SU(2) \times SU(2) / \{\pm (I,I)\}$. Hence we have an $S^1$-fiberation
\begin{align}\label{equ:rhoc}
    \rho^c:\text{Spin}^\C(4) &\to SO(4)\\
    [(z,A,B)] &\mapsto [(A,B)].
\end{align}

\begin{definition}\label{def:spinc}
A $\text{spin}^\C$ structure $\ss$ on a manifold $M$ is a principal $\text{Spin}^\C(4)$-bundle 
$P_{\text{Spin}^\C(4)} \to M$, with a bundle map from $P_{\text{Spin}^\C(4)}$ to the frame bundle $P_{SO(4)}$ of $M$, which restricts
to $\rho^c$ on each fiber.
\end{definition}

Looking at the definition of $\rho^c$, we find that a $\text{spin}^\C$ structure contains one more infomation than the frame bundle:
\begin{definition}
Let 
\begin{align}
    \det: \text{Spin}^\C(4) &\to S^1 \\
    \label{equ:determinant}[(z,A,B)] &\mapsto z^2.
\end{align}
The line bundle $\L = P_{\text{Spin}^\C(4)} \times_{\det} \C $ is called the determinant line bundle associated to the $\text{spin}^\C$ structure $\ss$. 
\end{definition}

A $\text{spin}^\C$ structure is actually a double cover of the frame bundle tensor the determinent line bundle. We have 
an exact sequence
\begin{align}\label{equ:doublespinc}
    1\to \Z_2 \to \text{Spin}^\C(4) &\stackrel{\rho'}{\to} S^1 \times SO(4) = SO(2) \times SO(4) \to 1\\
    [(z,A,B)] &\mapsto (z^2,[(A,B)]).
\end{align}
The double cover $\rho'$ can be extended to a double cover of $SO(6)$ (see page 56 of \cite{gompf19994}). Hence the $\text{spin}^\C$ structure exists if and only if
the second Stiefel-Whitney class $w_2(P_{\S^1\times SO(4)})$ vanishes, by the theory of the existence of spin structures metioned above. Namely, 
\begin{align}
    w_2(P_{\S^1\times SO(4)}) &= w_2(P_{\S^1}) + w_2(P_{SO(4)}) \\
    &= w_2(\L) + w_2(TM)\\
    &= 0 \in \Z/2.
\end{align}
Namely, $ w_2(TM) \equiv c_1(\L) \mod 2$. An integral cohomology class congruent to $w_2(TM)$ is called characteristic element. The set of characteristic elements is nonempty for any $4$-manifold (see Proposition 5.7.4 of \cite{gompf19994}). Thus the $\text{spin}^\C$ structure always exists.

\begin{remark}\label{remark:spincline}
Different choices of the double covers of $P_{\S^1\times SO(4)}$ (with the covering map $\rho'$ fiberwise) do not always give different $\text{spin}^\C$ structures. 
Indeed, the set of $\text{spin}^\C$ structures over $M$ is in (non-canonical) one-to-one correspondence with the isomorphism classes of complex line bundles over $M$. 
Recall that,
\begin{align}
    SO(4)  &= SU(2) \times SU(2) / \{\pm (I,I)\}\\
    \text{Spin}(4) &=  SU(2) \times SU(2) \\
    \text{Spin}^\C(4) &=  S^1 \times SU(2) \times SU(2) / \{\pm (1,I,I)\} = S^1 \times  \text{Spin}(4) / \{\pm (1,I)\}. \label{equ:spinc}
\end{align}
Thus the transition functions of a principal $\text{Spin}^\C(4)$-bundle over $M$ are given by $[z_{\alpha\beta},g_{\alpha\beta}]$ 
where $z_{\alpha\beta}: U_{\alpha\beta}\to S^1$ and $g_{\alpha\beta}: U_{\alpha\beta}\to \text{Spin}(4)$ for a good cover $\{U_{\alpha}\}$. 
Suppose we have two $\text{spin}^\C$ structures 
\begin{align*}
&P^{(1)}_{\text{Spin}^\C(4)} \to M\\ 
&P^{(2)}_{\text{Spin}^\C(4)} \to M
\end{align*}
with transition functions 
$[z_{\alpha\beta}^{(1)},g_{\alpha\beta}^{(1)}]$ and $[z_{\alpha\beta}^{(2)},g_{\alpha\beta}^{(2)}]$ respectively. 
Note that by the definition of the $\text{spin}^\C$ structure, 
\[
\rho^c([z_{\alpha\beta}^{(i)},g_{\alpha\beta}^{(i)}])=[g_{\alpha\beta}^{(i)}] \in SO(4)
\] 
would be the transition functions of the frame bundle $P_{SO(4)}$. 
Hence we have either 
\[g_{\alpha\beta}^{(1)} = g_{\alpha\beta}^{(2)}\in  \text{Spin}(4)\]
or 
\[g_{\alpha\beta}^{(1)} = -g_{\alpha\beta}^{(2)} \in  \text{Spin}(4).\] 
If it's the latter case, we can always choose a different representative of $[z_{\alpha\beta}^{(2)},g_{\alpha\beta}^{(2)}]$. Thus we can assume that 
$g_{\alpha\beta}^{(1)} = g_{\alpha\beta}^{(2)}$. Then 
\[
\theta_{\alpha\beta} =  z_{\alpha\beta}^{(2)}/z_{\alpha\beta}^{(1)}
\]
would give 
the transition functions of a complex line bundle $\L$ over $M$, such that 
\[
P^{(1)}_{\text{Spin}^\C(4)}\otimes \L \cong P^{(2)}_{\text{Spin}^\C(4)}.
\]
(This shows that the action of $H^2(M;\Z) = [M,\C P^\infty]$ on the set of $\text{spin}^c$-structures is transitive. Actually this action is also free.)

By the definition of the determinant line bundle, 
\[
\det (P^{(1)}_{\text{Spin}^\C(4)}\otimes \L) = \det(P^{(1)}_{\text{Spin}^\C(4)}) \otimes \L^2.
\]
Hence 
\[
c_1(\det (P^{(1)}_{\text{Spin}^\C(4)}\otimes \L)) = c_1( \det(P^{(1)}_{\text{Spin}^\C(4)})) + 2c_1(\L).
\]
When $H^2(M;\Z)$ has no $2$-torsion, $2c_1(\L) = 0$ iff $c_1(\L) = 0$, iff $\L$ is trivial. Hence $c_1\circ \det$ is injective. If $P^{(1)}_{\text{Spin}^\C(4)}$ and $P^{(2)}_{\text{Spin}^\C(4)}$ are two different choices of the double covers of $P_{\S^1\times SO(4)}$ (with the covering map $\rho'$ fiberwise), then the difference line bundle $\L$ has transition functions $\theta_{\alpha\beta} = \pm 1$ such that $\L^2$ is trivial. Hence 
\[
\det(P^{(2)}_{\text{Spin}^\C(4)}) = \det (P^{(1)}_{\text{Spin}^\C(4)}\otimes \L) = \det(P^{(1)}_{\text{Spin}^\C(4)}) \otimes \L^2 =  \det (P^{(1)}_{\text{Spin}^\C(4)})
\]
and therefore $c_1\circ \det$ sends them to the same element. Hence they are isomorphic $\text{spin}^\C$ structures.

In conclusion, although it seems that by (\ref{equ:doublespinc}) and (\ref{equ:spinc}) a $\text{spin}^\C$ structure encodes some infomation of the spin structure, and by Example \ref{example:spin}, each element of $H^1$ would produce a different spin structure, but that difference comes from the different choice of the lift of $\text{Spin}(4) \to SO(4)$, which can be passed to the difference of the complex line bundle in $P_{\text{Spin}^\C(4)}$.
\end{remark}

For a $1$-surgery along a nontrivial loop, all $\text{spin}^\C$ structures can be extended to the new manifold uniquely.
\begin{theorem}\label{thm:changeOfSpinc}
Let $X$ be any $4$-manifold with $H^1(X;\Z) = \Z$. Let $\alpha$ be a generator of $H^1(X;\Z)$. Let $\gamma$ be the loop we choose to do the surgery, with $ \langle \alpha, \gamma \rangle = 1$. Let $N = \S^1 \times D^3$ be a tubular neighborhood of $\gamma$ with radius sufficiently small.
    Let $X_0$ be the complement of $N$. Let $X'= X_0 \cup_{\S^1 \times \S^2} (D^2 \times \S^2)
    $ be the manifold obtained by doing the surgery on $X$ along $\gamma$. Let $\ss$ be any $\text{Spin}^\C$ structure over $X$ and $\L$ be the corresponding determinant line bundle. Let $\mathcal{S}(X')$ be the set of $\text{spin}^\C$ structures on $X'$, and define
    \[
    \mathcal{S}(X', \ss) := \{\Gamma \in \mathcal{S}(X') ; \left.\Gamma\right|_{X_0} =\left. \ss\right|_{X_0}\}.
    \]
    Then $ \mathcal{S}(X', \ss) $ contains a unique (up to an isomorphism) $\text{Spin}^\C$ structure $\ss'$ over $X'$,
    and the determinant line bundle $\L'$ associated to $\ss'$ satisfies
    \[
    \langle c_1(\L')^2,X' \rangle = \langle c_1(\L)^2 ,X \rangle.
    \]
In particular, above results do not depend on the framing of the $1$-surgery.
\end{theorem}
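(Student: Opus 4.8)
\emph{Plan.} I would organize the argument around two standard facts: on a connected space the isomorphism classes of $\text{Spin}^\C$ structures form (when nonempty) a torsor over $H^2(-;\Z)$, with restriction to a subspace equivariant for restriction on $H^2$; and $\text{Spin}^\C$ structures glue, so a $\text{Spin}^\C$ structure on $X'=X_0\cup_{\S^1\times\S^2}(D^2\times\S^2)$ is the same datum as $\text{Spin}^\C$ structures on $X_0$ and on $D^2\times\S^2$ together with an isomorphism of their restrictions to $\S^1\times\S^2$. From $N=\S^1\times D^3\simeq\S^1$ and $D^2\times\S^2\simeq\S^2$ one has $H^2(N;\Z)=0$ and an isomorphism $H^2(D^2\times\S^2;\Z)\xrightarrow{\sim}H^2(\S^1\times\S^2;\Z)$ (both $\cong\Z$, generated by the pullback of a generator of $H^2(\S^2)$ along the projection). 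The one genuinely geometric input, which I would establish first, is that \emph{the restriction $H^1(X_0;\Z)\to H^1(\S^1\times\S^2;\Z)$ is surjective}: since $\langle\alpha,\gamma\rangle=1$, the class $\alpha$ restricts to a generator of $H^1(N;\Z)\cong\Z$; the inclusion $\S^1\times\S^2=\partial N\hookrightarrow N$ induces an isomorphism on $H^1$ (being homotopic to the projection $\S^1\times\S^2\to\S^1$ followed by $\S^1\hookrightarrow N$); and $H^1(X)\to H^1(\S^1\times\S^2)$ factors through $H^1(X_0)$ because $\S^1\times\S^2\subset X_0\subset X$.

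\emph{Existence and uniqueness of $\ss'$.} Set $\ss_0=\ss|_{X_0}$. Equivariance over the isomorphism $H^2(D^2\times\S^2)\xrightarrow{\sim}H^2(\S^1\times\S^2)$ makes restriction $\mathcal{S}(D^2\times\S^2)\to\mathcal{S}(\S^1\times\S^2)$ a bijection (both torsors are nonempty, as $D^2\times\S^2$ retracts onto $\S^2$ and is spin), so I would take the unique $\tau\in\mathcal{S}(D^2\times\S^2)$ with $\tau|_{\S^1\times\S^2}\cong\ss_0|_{\S^1\times\S^2}$ and glue $\ss_0$ to $\tau$ along any isomorphism of boundary restrictions; this yields an element of $\mathcal{S}(X',\ss)$, giving existence. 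For uniqueness: any $\ss''\in\mathcal{S}(X',\ss)$ restricts to $\ss_0$ on $X_0$, hence to $\ss_0|_{\S^1\times\S^2}$ on the boundary, hence to $\tau$ on $D^2\times\S^2$ by the bijection; thus $\ss''$ is a gluing of $\ss_0$ and $\tau$. Two gluing isomorphisms differ by an automorphism of $\ss_0|_{\S^1\times\S^2}$, a gauge transformation whose homotopy class lies in $[\S^1\times\S^2,\S^1]=H^1(\S^1\times\S^2;\Z)$; by the surjectivity above every such class extends over $X_0$, and extending the gauge transformation gives an isomorphism between the two glued $\text{Spin}^\C$ structures, so $\ss''\cong\ss'$.

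\emph{The determinant square.} Write $a=c_1(\L)$, $a'=c_1(\L')$. Since $\ss'|_{X_0}=\ss|_{X_0}$, the determinant line bundles agree over $X_0$, so $a|_{X_0}=a'|_{X_0}=:\bar a$; as $a|_{\S^1\times\S^2}$ factors through $H^2(N)=0$ we get $\bar a|_{\S^1\times\S^2}=0$, hence $a'|_{\S^1\times\S^2}=0$, and injectivity of $H^2(D^2\times\S^2)\to H^2(\S^1\times\S^2)$ forces $a'|_{D^2\times\S^2}=0$. So $a$ lifts to $\hat a\in H^2(X,N)$ (as $H^2(N)=0$) and $a'$ lifts to $\hat a'\in H^2(X',D^2\times\S^2)$; the adjunction for the relative cup product then gives
\[
\langle c_1(\L)^2,X\rangle=\langle\hat a\smile a,[X,N]\rangle,\qquad \langle c_1(\L')^2,X'\rangle=\langle\hat a'\smile a',[X',D^2\times\S^2]\rangle,
\]
where $[X,N]$ and $[X',D^2\times\S^2]$ denote the images of the fundamental classes. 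Excision identifies $H^\ast(X,N)\cong H^\ast(X_0,\partial X_0)\cong H^\ast(X',D^2\times\S^2)$ compatibly with restriction from $X$ and $X'$, with the (relative, absolute) cup products, and with fundamental classes; it sends $\hat a,\hat a'\mapsto\beta,\beta'\in H^2(X_0,\partial X_0)$, $a,a'\mapsto\bar a\in H^2(X_0)$, and both relative fundamental classes to $[X_0,\partial X_0]$ (the orientations of $X_0$ induced from $X$ and from $X'$ coincide). Hence $\langle c_1(\L)^2,X\rangle=\langle\beta\smile\bar a,[X_0,\partial X_0]\rangle$ and $\langle c_1(\L')^2,X'\rangle=\langle\beta'\smile\bar a,[X_0,\partial X_0]\rangle$. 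Finally $j^\ast\beta=j^\ast\beta'=\bar a$ (naturality of the long exact sequences, $j^\ast\colon H^2(X_0,\partial X_0)\to H^2(X_0)$), so $\beta-\beta'=\delta\mu$ for some $\mu\in H^1(\partial X_0)$, and using the standard identity $\delta\mu\smile\nu=\pm\,\delta(\mu\smile i^\ast\nu)$ (signs immaterial here) together with $i^\ast\bar a=\bar a|_{\partial X_0}=0$,
\[
\langle\beta\smile\bar a,[X_0,\partial X_0]\rangle-\langle\beta'\smile\bar a,[X_0,\partial X_0]\rangle=\langle\delta\mu\smile\bar a,[X_0,\partial X_0]\rangle=\pm\,\langle\delta(\mu\smile i^\ast\bar a),[X_0,\partial X_0]\rangle=0,
\]
which gives $\langle c_1(\L')^2,X'\rangle=\langle c_1(\L)^2,X\rangle$.

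\emph{Framing independence and the main obstacle.} The framing of $\gamma$ enters only through the gluing diffeomorphism $\partial N\cong\partial(D^2\times\S^2)$; since every step above used only $\ss|_{X_0}$ and the homological facts listed, the conclusions — uniqueness of $\ss'$ and $\langle c_1(\L')^2,X'\rangle=\langle c_1(\L)^2,X\rangle$ — hold for any choice of framing. I expect the main obstacle to be the last step: making rigorous that the self-intersection number localizes to the common piece $X_0$ and is insensitive to the chosen relative lift $\beta$. The homotopy-theoretic inputs (surjectivity of $H^1(X_0)\to H^1(\S^1\times\S^2)$ and the torsor/gauge-extension argument) are routine; the real care lies in the naturality of excision for relative cup products and in the sign conventions for fundamental classes, where I would lean on a standard reference.
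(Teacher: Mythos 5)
Your proposal is correct, and it takes a genuinely different route in places, particularly for the determinant computation. For existence and uniqueness the paper runs the Mayer--Vietoris sequence for $X'=X_0\cup(D^2\times\S^2)$ to show $i^\ast\colon H^2(X')\to H^2(X_0)$ is an isomorphism, and then argues at the level of difference line bundles; you instead glue $\text{spin}^c$ structures directly across $\S^1\times\S^2$ and kill the gluing ambiguity by extending the boundary gauge transformation over $X_0$. These arguments are equivalent and both rest on the same geometric input, namely that $H^1(X_0;\Z)\to H^1(\S^1\times\S^2;\Z)$ is surjective (the paper phrases this by observing that $\{pt\}\times\S^2$ bounds in $X_0$, you by noting $\langle\alpha,\gamma\rangle=1$ forces $\alpha$ to restrict to a generator of $H^1(\partial X_0)$); your phrasing makes that dependence more transparent. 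The real divergence is in $\langle c_1(\L')^2,X'\rangle=\langle c_1(\L)^2,X\rangle$. The paper argues geometrically: represent $c_1(\L)$ by a surface $\Sigma$ disjoint from $\gamma$, note that since $\L'$ is the trivial extension of $\L|_{X_0}$ across $D^2\times\S^2$ the section of $\L$ extends to $X'$ with no new zeros, so both squares equal $[\Sigma]^2$. You prove the same identity purely cohomologically, lifting $c_1$ to relative classes, identifying everything in $H^2(X_0,\partial X_0)$ by excision, and absorbing the ambiguity $\beta-\beta'=\delta\mu$ via $\delta\mu\smile\bar a=\pm\,\delta(\mu\smile i^\ast\bar a)=0$, which works precisely because $c_1(\L)$ already vanishes on $\partial X_0$. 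Your version avoids transversality and Poincar\'e-dual representability and is cleaner to make rigorous; the paper's is shorter once one accepts those tools. The point you flag as delicate --- naturality of excision for relative cup products and orientation matching of $[X_0,\partial X_0]$ under the two gluings --- is exactly the genuine technical content, and your handling of it is correct.
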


\begin{proof}
We first show that $\mathcal{S}(X', \ss)$ is nonempty. Let $\ss'$ be any $\text{Spin}^\C$ structure over $X'$. By Remark \ref{remark:spincline}, 
the difference between $\ss'|_{X_0}$ and $\ss|_{X_0}$ is a complex line bundle $L_0$ over $X_0$, namely, $\ss'|_{X_0} \otimes L_0 = \ss|_{X_0}$. 

We claim that $L_0$ can be extended to a complex line bundle $L'$ over $X'$. Indeed, for the inclusions 
\begin{align*}
    i_\partial: \partial X_0 = \S^1 \times \S^2 &\to D^2 \times \S^2\\
    i: X_0 &\to X',
\end{align*} 
the induced homomorphisms
\begin{align*}
    i_\partial^*: H^2(D^2 \times \S^2) &\to H^2(\S^1 \times \S^2)\\  
    i^*: H^2(X') &\to H^2(X_0)
\end{align*}
are all isomorphisms. This follows from the following Mayer-Vietoris sequence (the last three terms form a split short exact sequence, because the dual of $\alpha$ in $X$ implies that $\{pt\} \times \S^2 \subset \S^1\times \S^2 = \partial X_0$ bounds a $3$-manifold in $X_0$, and therefore the inclusion $\S^1 \times \S^2 \to X_0$ induces a trivial map of $H^2$):
\begin{center}
\begin{tikzpicture}[commutative diagrams/every diagram]
\node (P0) at (0cm, 0cm) {$H^1(X_0)\oplus H^1(D^2 \times \S^2)$};
\node (P1) at (-1cm, -0.1cm) {};
\node (P2) at (3.3cm, 0cm) {$H^1(\S^1 \times \S^2)$} ;
\node (P6) at (3.3cm, -1cm) {$\Z$} ;
\node (P3) at (5.5cm, 0cm) {$H^2(X')$};
\node (P4) at (-1cm, -1cm) {$\Z$};
\node (P5) at (8.5cm, 0cm) {$H^2(X_0)\oplus H^2(D^2 \times \S^2)$};
\node (P7) at (8.7cm, -0.1cm) {};
\node (P8) at (8.7cm, -1cm) {$\Z$};
\node (P9) at (12cm, 0cm) {$H^2(\S^1 \times \S^2)$};
\node (P10) at (12cm, -1cm) {$\Z$};
\path[commutative diagrams/.cd, every arrow, every label]
(P0) edge node {} (P2)
(P2) edge node {$0$} (P3)
(P1) edge node {$\cong$} (P4)
(P3) edge node {} (P5)
(P2) edge node {$\cong$} (P6)
(P4) edge node {$\cong$} (P6)
(P7) edge node {$\cong$} (P8)
(P5) edge node {} (P9)
(P8) edge node {$ i_\partial^*$} (P10)
(P9) edge node {$\cong$} (P10);
\end{tikzpicture}
\end{center}
Topologically, the dual of $c_1(L_0)|_{\partial X_0}$ is some copies of $\S^1 \times \{pt\} \subset \S^1 \times \S^2 = \partial X_0$, and they can be extended to $D^2 \times \{pt\} \subset D^2 \times \S^2$. In conclusion, there exists a cohomology class in $H^2(X',\Z)= [X' , \CP^\infty]$ that restricts to $c_1(L_0) \in H^2(X_0,\Z)$, and by the property of the universal complex line bundle over $\CP^\infty$, the pullback $L'$ is a complex line bundle over $X'$ that restricts to $L_0$. Therefore, we have
\[
\left.(\ss'\otimes L')\right|_{X_0}  =\left.\ss'\right|_{X_0} \otimes L_0 =  \left.\ss\right|_{X_0}.
\]
So $\ss'\otimes L' \in \mathcal{S}(X', \ss)$.

Next, we prove that all elements in $\mathcal{S}(X', \ss)$ are isomorphic. Let $\ss'_{(1)}, \ss'_{(2)} \in \mathcal{S}(X', \ss)$. Let $L'$ be a complex line bundle on $X'$ such that 
\[
 \ss'_{(1)} \otimes L' = \ss'_{(2)}.
 \]
Then
\begin{align*}
\left.\ss'_{(2)}\right|_{X_0}  &=  \left.(\ss'_{(1)} \otimes L')\right|_{X_0}\\
 &= \left.\ss'_{(1)}\right|_{X_0} \otimes \left.L'\right|_{X_0} \\
 &= \left.\ss\right|_{X_0} \otimes \left.L'\right|_{X_0}\\
 &= \left.\ss'_{(2)}\right|_{X_0}\otimes \left.L'\right|_{X_0}.
\end{align*}
Remark \ref{remark:spincline} shows that the action of $H^2(X_0)= [X_0,\C P^\infty]$ on $\mathcal{S}(X_0)$ is free and transitive. Hence $c_1( L'|_{X_0})= 0\in H^2(X_0,\Z)$. Note that $i^*(c_1( L')) = c_1( L'|_{X_0})$ and $i^*$ is an isomorphism. Therefore $c_1( L')=0\in H^2(X',\Z)$. So $L'$ is trivial and $\ss'_{(1)}= \ss'_{(2)}$.

Lastly, we show that 
\[
    \langle c_1(\L')^2,X' \rangle = \langle c_1(\L)^2 ,X \rangle.
    \]
The intersection between a generic section of $\L$ and the zero section is a $2$-manifold $\Sigma \subset X$. For dimension reason we can assume $\gamma \cap \Sigma  = \emptyset$. By choosing a small enough neighborhood of $\gamma$ we can further assume $\Sigma \subset X_0$. $ \langle c_1(\L)^2 ,X \rangle$ is the self-intersection $[\Sigma]^2$ of $\Sigma$.

Since $\ss'|_{X_0} = \ss|_{X_0}$, $\L'|_{X_0} = \det(\ss')|_{X_0} =\det(\ss)|_{X_0} =\L|_{X_0}$. As a complex line bundle, $\L|_{\S^1\times D^3}$ must be trivial. Hence it's a trivial line bundle over $\partial X_0$. Since $ i^*: H^2(X') \to H^2(X_0)$ is an isomorphism, $\L'$ is the unique extension of $\L'|_{X_0}=\L|_{X_0}$, and therefore it must extend $\L|_{\partial X_0}$ trivially. Hence the generic section of $\L|_{X_0}$ mentioned above can be extended to $X'$ without additional zeros. Hence $\langle c_1(\L')^2,X' \rangle = [\Sigma]^2 = \langle c_1(\L)^2 ,X \rangle$.
\end{proof}

For a $1$-surgery along a homologically trivial loop, all $\text{spin}^\C$ structures can be extended to the new manifold. The extension is not unique. However, it would not change the index of the Dirac operator.
\begin{theorem}\label{thm:changeOfSpinc1}
Let $X$ be any $4$-manifold with $H^1(X;\Z) = 0$. Let $\gamma$ be a homologically trivial loop that we choose to do the surgery. Let $N = \S^1 \times D^3$ be a tubular neighborhood of $\gamma$ with radius sufficiently small.
    Let $X_0$ be the complement of $N$. Let $X'= X_0 \cup_{\S^1 \times \S^2} (D^2 \times \S^2)
    $ be the manifold obtained by doing the surgery on $X$ along $\gamma$. Let $\ss$ be any $\text{Spin}^\C$ structure over $X$ and $\L$ be the corresponding determinant line bundle. Let $\mathcal{S}(X')$ be the set of $\text{spin}^\C$ structures on $X'$, and
    \[
    \mathcal{S}(X', \ss) := \{\Gamma \in \mathcal{S}(X') ; \left.\Gamma\right|_{X_0} =\left. \ss\right|_{X_0}\}.
    \]
    Then $ \mathcal{S}(X', \ss) $ contains a $\Z$-family of $\text{Spin}^\C$ structures over $X'$,
    and for any $\ss' \in  \mathcal{S}(X', \ss) $ the determinant line bundle $\L'$ associated to $\ss'$ satisfies
    \begin{equation}\label{equ:c1-square}
    \langle c_1(\L')^2,X' \rangle = \langle c_1(\L)^2 ,X \rangle.
    \end{equation}
\end{theorem}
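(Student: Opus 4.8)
My plan is to follow the template of Theorem~\ref{thm:changeOfSpinc}, replacing its Mayer--Vietoris input by the one appropriate to a homologically trivial surgery curve. Write $N=\S^1\times D^3$ and $X'=X_0\cup_{\S^1\times\S^2}(D^2\times\S^2)$. First I would run Mayer--Vietoris twice. The inclusion $\S^1\times\S^2\hookrightarrow N$ is homotopic to the projection onto $\S^1$, so $H^1(N;\Z)\to H^1(\S^1\times\S^2;\Z)$ is an isomorphism, and together with $H^1(X;\Z)=0$ the Mayer--Vietoris sequence of $X=X_0\cup N$ then forces $H^1(X_0;\Z)=0$. Likewise $\S^1\times\S^2\hookrightarrow D^2\times\S^2$ is homotopic to the projection onto $\S^2$, so $H^2(D^2\times\S^2;\Z)\to H^2(\S^1\times\S^2;\Z)$ is an isomorphism $\Z\to\Z$. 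Feeding both facts into the Mayer--Vietoris sequence of $X'=X_0\cup(D^2\times\S^2)$ and chasing the diagram, I expect to obtain that
\[
i^*\colon H^2(X';\Z)\longrightarrow H^2(X_0;\Z)
\]
is surjective, that $\ker i^*\cong H^1(\S^1\times\S^2;\Z)\cong\Z$, and --- crucially --- that every class in $\ker i^*$ restricts to $0$ on $D^2\times\S^2$ as well. (In Theorem~\ref{thm:changeOfSpinc} this $i^*$ was an isomorphism; here only surjectivity survives, and the kernel accounts for the $\Z$ in the statement.)

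Given this, the statement about $\mathcal{S}(X',\ss)$ should come out exactly as in Theorem~\ref{thm:changeOfSpinc}. For nonemptiness: given any $\text{Spin}^\C$ structure on $X'$, its difference from $\ss$ over $X_0$ is a complex line bundle $L_0$ on $X_0$; since $i^*$ is surjective, $c_1(L_0)$ extends to $H^2(X';\Z)$, and twisting produces an element of $\mathcal{S}(X',\ss)$. For the size: two elements of $\mathcal{S}(X',\ss)$ differ by a complex line bundle on $X'$ that is trivial over $X_0$ --- by freeness of the $H^2(X_0;\Z)$-action on $\mathcal{S}(X_0)$ (Remark~\ref{remark:spincline}) --- i.e.\ by a class in $\ker i^*$, and conversely twisting by such a line bundle preserves $\mathcal{S}(X',\ss)$. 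Hence, since $H^2(X';\Z)$ acts freely and transitively on $\mathcal{S}(X')$, the set $\mathcal{S}(X',\ss)$ is a torsor over $\ker i^*\cong\Z$.

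For the $c_1^2$-identity I would fix $\ss'\in\mathcal{S}(X',\ss)$ with determinant line bundle $\L'$ and argue by a self-intersection count. Naturality of $\det$ gives $\L'|_{X_0}=\L|_{X_0}$; since $H^2(N;\Z)=0$, both $\L|_N$ and $\L|_{\S^1\times\S^2}$ are trivial, and then injectivity of $H^2(D^2\times\S^2)\to H^2(\S^1\times\S^2)$ forces $\L'|_{D^2\times\S^2}$ to be trivial. Take a generic (transverse) section $\sigma$ of $\L$ over $X$; its zero set is a closed oriented surface $\Sigma$ Poincaré dual to $c_1(\L)$, so $\langle c_1(\L)^2,X\rangle=\Sigma\cdot_X\Sigma$. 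Since $\dim\Sigma+\dim\gamma=3<4$, after isotoping $\Sigma$ off $\gamma$ and shrinking $N$ accordingly we may assume $\Sigma\subset X_0$ with $\Sigma$ disjoint from $N$, so that $\sigma$ is nowhere zero near $\partial X_0=\S^1\times\S^2$. Regarding $\sigma|_{X_0}$ as a section of $\L'|_{X_0}=\L|_{X_0}$ and trivializing $\L'|_{D^2\times\S^2}$, I would extend its nowhere-zero boundary value to a generic $\C$-valued function on $D^2\times\S^2$ whose zero set is a closed surface $\Sigma_1\subset D^2\times\S^2$; the resulting transverse section of $\L'$ over $X'$ has zero set $\Sigma\sqcup\Sigma_1$, whose class $[\Sigma]+[\Sigma_1]$ is Poincaré dual to $c_1(\L')$, so
\[
\langle c_1(\L')^2,X'\rangle=\Sigma\cdot_{X'}\Sigma+2\,\Sigma\cdot_{X'}\Sigma_1+\Sigma_1\cdot_{X'}\Sigma_1 .
\]
Here the middle term vanishes because $\Sigma$ and $\Sigma_1$ lie in disjoint pieces of $X'$; the last term vanishes because $[\Sigma_1]\in H_2(D^2\times\S^2;\Z)\cong\Z$ is a multiple of $[\{pt\}\times\S^2]$, whose self-intersection is $0$; and $\Sigma\cdot_{X'}\Sigma=\Sigma\cdot_X\Sigma$ since the normal bundle of $\Sigma\subset X_0$ does not see the rest of $X$ or of $X'$. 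This would give $\langle c_1(\L')^2,X'\rangle=\langle c_1(\L)^2,X\rangle$ for every $\ss'\in\mathcal{S}(X',\ss)$, which is~(\ref{equ:c1-square}).

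The main obstacle, relative to Theorem~\ref{thm:changeOfSpinc}, is the loss of injectivity of $i^*$: the extended structure $\ss'$ is pinned down only up to the $\Z$-action of $\ker i^*$, so the ``unique extension'' device of Theorem~\ref{thm:changeOfSpinc} is not available. What makes the count above go through uniformly over this $\Z$ is the last clause of the Mayer--Vietoris analysis --- that the ambiguity class restricts trivially to \emph{both} $X_0$ and $D^2\times\S^2$ --- so it enters the self-intersection only through multiples of the belt sphere $\{pt\}\times\S^2$, which has vanishing self-intersection and vanishing intersection with everything supported in $X_0$. Proving that clause (and the surjectivity of $i^*$) is the one genuinely new computation; the rest is a faithful adaptation of the proof of Theorem~\ref{thm:changeOfSpinc}.
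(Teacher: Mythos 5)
Your proposal is correct and follows essentially the same route as the paper: Mayer--Vietoris to show $i^*\colon H^2(X')\to H^2(X_0)$ is surjective with kernel $\Z$ (accounting for the $\Z$-family), followed by a self-intersection computation showing the belt-sphere contribution to $c_1(\L')^2$ vanishes. The only organizational difference is that you derive the key fact directly from exactness --- that a class in $\ker i^*$ must restrict trivially to $D^2\times\S^2$ as well, since $H^2(D^2\times\S^2)\to H^2(\S^1\times\S^2)$ is injective --- and then run the count by extending a transverse section of $\L'$ across $D^2\times\S^2$, whereas the paper identifies the generator of $\ker i^*$ more pictorially as the Poincar\'e dual of $\{pt\}\times\S^2$ via a capped-off Seifert surface for $\gamma$; both versions land on the same identity $\langle c_1(\L')^2,X'\rangle=[\Sigma]^2+0+0$.
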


\begin{proof}
We first show that $\mathcal{S}(X', \ss)$ is nonempty. Let $\ss'$ be any $\text{Spin}^\C$ structure over $X'$. By Remark \ref{remark:spincline}, 
the difference between $\ss'|_{X_0}$ and $\ss|_{X_0}$ is a complex line bundle $L_0$ over $X_0$, namely, $\ss'|_{X_0} \otimes L_0 = \ss|_{X_0}$. 

We still have that $L_0$ can be extended to a complex line bundle over $X'$, but now the extension is not unique. Now the induced homomorphisms
\[
    i^*: H^2(X') \to H^2(X_0)
\]
is surjective and has kernel $\Z$. This follows from the following Mayer-Vietoris sequence:

\begin{center}
\begin{tikzpicture}[commutative diagrams/every diagram]
\node (P0) at (0cm, 0cm) {$H^1(X_0)\oplus H^1(D^2 \times \S^2)$};
\node (P1) at (-1cm, -0.1cm) {};
\node (P2) at (3.3cm, 0cm) {$H^1(\S^1 \times \S^2)$} ;
\node (P6) at (3.3cm, -1cm) {$\Z$} ;
\node (P3) at (5.5cm, 0cm) {$H^2(X')$};
\node (P4) at (-1cm, -1cm) {?};
\node (P5) at (8.5cm, 0cm) {$H^2(X_0)\oplus H^2(D^2 \times \S^2)$};
\node (P7) at (8.7cm, -0.1cm) {};
\node (P8) at (8.7cm, -1cm) {$\Z$};
\node (P9) at (12cm, 0cm) {$H^2(\S^1 \times \S^2)$};
\node (P10) at (12cm, -1cm) {$\Z$};
\path[commutative diagrams/.cd, every arrow, every label]
(P0) edge node {$0$} (P2)
(P2) edge node {} (P3)
(P1) edge node {$\cong$} (P4)
(P3) edge node {} (P5)
(P2) edge node {$\cong$} (P6)
(P4) edge node {$0$} (P6)
(P7) edge node {$\cong$} (P8)
(P5) edge node {} (P9)
(P8) edge node {$ i_\partial^*$} (P10)
(P9) edge node {$\cong$} (P10);
\end{tikzpicture}
\end{center}
For the surjectivity of $i^*$, pick any $\sigma \in H^2(X_0)$. Then there exists a $\tau \in H^2(D^2 \times \S^2)$ such that 
\begin{align*}
 H^2(X_0) \oplus  H^2(D^2 \times \S^2) &\to  H^2(\S^1 \times \S^2)\\
 (\sigma,\tau) &\mapsto 0
\end{align*}
since $ i_\partial^*$ is an isomorphism. Hence $(\sigma,\tau)$ is in the image of $H^2(X')\to H^2(X_0) \oplus  H^2(D^2 \times \S^2)$ and $\sigma$ is in the image of $i^*$. Choose any complex line bundle $L'$ on $X'$ such that $i^*(c_1(L')) = c_1(L_0)$. Then $\left.(\ss' \otimes L')\right|_{X_0} = \ss$.


To prove the last statement, let $\ss'$ be in $\mathcal{S}(X', \ss) $ and $\L'$ be its determinant line bundle. We want to show (\ref{equ:c1-square}). Topologically, as in the proof of Theorem \ref{thm:changeOfSpinc}, the dual of $c_1(\L)$ is a surface $\Sigma \subset X_0$. But now in addition to $\Sigma$, the dual of $c_1(\L')$ may contain some spheres in $D^2 \times \S^2$ , which come from $\ker i^*$.

To understand the kernel of $i^*$, we have to understand the image of the connecting homomorphism $H^1(\S^1 \times \S^2) \to H^2(X')$. The loop $\gamma$ is homologically trivial, so $\gamma$ bounds a surface in $X_0$. The union of this surface and the core of $D^2 \times \S^2$ is a closed surface $F$ in $X'$. The generator of $\ker i^*$ sends $[F]$ to $1$. So its dual is represented by $\{pt\} \times \S^2 \subset D^2\times \S^2 \subset X'$. The self-intersection of this submanifold in $X'$ must be zero, whatever the framing of the surgery is (for example, the surgery on a homotopically trivial loop in a spin manifold may produce a connected summand $\S^2 {\times} \S^2$ or $\S^2\tilde{\times} \S^2$). Since $\{pt\} \times \S^2 \subset D^2\times \S^2$ doesn't intersect $\Sigma \subset X_0$,
\begin{equation*}
    \langle c_1(\L')^2,X' \rangle =[\Sigma] \cap [\Sigma] + [\{pt\} \times \S^2 ] \cap  [\{pt\} \times \S^2 ] =[\Sigma] \cap [\Sigma] =\langle c_1(\L)^2 ,X \rangle.
\end{equation*}

\end{proof}
\begin{example}
Let $X = \S^4$ and the surgery over a Lie framed circle produces $X' = \S^2\widetilde{\times} \S^2$. The Kirby diagram for $X_0 \cong D^2 \times \S^2$ is a $0$-framed circle. To glue a $D^2 \times \S^2$ to $X_0$ is to add a $1$-framed circle that links to the $0$-framed circle. Then above $F$ is the sphere as the union of the disk spanned by the $1$-framed circle and the core of the $2$-handle attaching to it. Above $\{pt\} \times \S^2$ is the sphere corresponding to the $0$-framed circle.

Note that $F$ is the exceptional sphere of $\C P^2$. Let $G$ be the exceptional sphere of $\overline{\C P}^2$. Let $x$ and $y$ be the dual classes of $F$ and $G$. The intersection form for the basis $\{x,y\}$ is 
\begin{equation*}
\begin{bmatrix}
1 & 0 \\[10pt]
0 & -1
\end{bmatrix}
\end{equation*}
$F$ and $G$ correspond to unlinked $1$-framed circle and $-1$-framed circle in the Kirby diagram of $\C P^2 \# \overline{\C P}^2$. The handle slide of the $-1$-framed circle around the $1$-framed circle produces the $0$-framed circle in the Kirby diagram of $\S^2\widetilde{\times} \S^2$ (see Figure \ref{fig:kirby}). Hence $x +y$ is represented by above $\{pt\} \times \S^2$, and is a generator of $\ker i^*$. $\ker i^*$ contains all complex line bundles of $X'$ that come from $X$.
\begin{figure}[ht!]
    \begin{Overpic}{\includegraphics[scale=0.7]{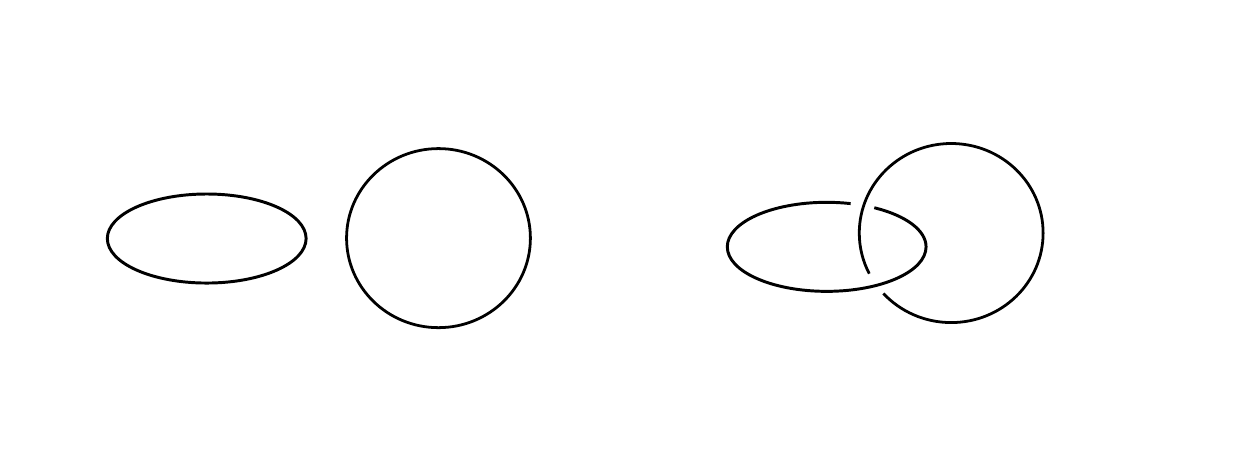}}
        \put(20,12){$1$}
        \put(38,9){$-1$}
        \put(50,17){$\to$}
        \put(64,11){$1$}
        \put(80,9){$0$}
    \end{Overpic}
    \caption{}
    \label{fig:kirby}
\end{figure}

Since $w_2(X') = x +y$, all $\text{spin}^\C$ structures on $X'$ have determinant line bundles $mx + ny$ with $m,n$ odd. Let $\ss$ be the unique $\text{spin}^\C$ structures on $X$. Then $\mathcal{S}(X', \ss)$ are $\text{spin}^\C$ structures on $X'$ with determinant line bundles $a(x + y)$ with $a$ odd. The following facts illustrate some points in the proof of Theorem \ref{thm:changeOfSpinc1}:
\begin{itemize}
\item The extension of $\ss$ to $X'$ is not unique;
\item $\mathcal{S}(X', \ss)$ doesn't contain all $\text{spin}^\C$ structures on $X'$;
\item For any $\ss' \in \mathcal{S}(X', \ss)$, $c_1(\ss')-c_1(\ss) $ are some copies of spheres $(x+y)$, and $(x+y)^2=0$;
\item These spheres would not intersect $c_1(\ss)$ even when $X$ is nontrivial, because $(x+y)$ only intersects $x$ and $y$, which are introduced by the surgery and not in $H^2(X)$.
\end{itemize}

\end{example}

In the gluing theory of Seiberg-Witten monopoles, the Seiberg-Witten equations and thus the $\text{spin}^c$ structure of the boundary $\partial X_0 = \partial N = \S^1\times\S^2$ would be considered. Hence one has to consider how to restrict the $\text{spin}^c$ structure of the $4$-manifold $X_0$ to the $3$-manifold $\S^1\times\S^2$.

Let $X$ be any $4$-manifold with boundary $\partial X$. Identify $TX|_{\partial X}$ with $T\partial X \oplus \nu$ where $\nu$ is the normal bundle of $\partial X\subset X$. 
Let $P_{SO(4)}$, $P_{SO(3)}$ be the frame bundles of $T\partial X \oplus \nu$ and $T\partial X$, Let $g^{(4)}\in SO(4)$, $g^{(3)}\in SO(3)$ be corresponding transition functions on a point $x\in \partial X$. The following diagram commutes:
\begin{center}
\begin{tikzpicture}[commutative diagrams/every diagram]
\node (P0) at (0cm, 0cm) {Fr(3)};
\node (P1) at (0cm,  -1.6cm) {Fr(3)};
\node (P2) at (1.7cm, 0cm) {Fr(4)};
\node (P3) at (1.7cm, -1.6cm) {Fr(4)};
\node (P4) at (0.4cm, -0.8cm) {$g^{(3)}$};
\node (P5) at (1.4cm, -0.8cm) {$g^{(4)}$};
\node (P5) at (0.9cm, -0.9cm) {$\mapsto$};
\node (P6) at (0.9cm, -0.7cm) {$i$};
\path[commutative diagrams/.cd, every arrow, every label]
(P0) edge node {} (P1)
(P0) edge node {} (P2)
(P1) edge node {} (P3)
(P2) edge node {} (P3);
\end{tikzpicture}
\end{center}
where the top and bottom horizontal arrows are given by adding an inner vector. Then the map $i$ between transition functions is given by the natural embedding of $SO(3)\to SO(4)$.

Let $\mathbb H$ be quaternions and $\SU(2)=\S^3$ be the group of unit quaternions. $q\in \SU(2)$ acts on $\imaginary\mathbb H$ by
\[
x \mapsto qxq^{-1},
\]
which gives the double cover $\rho_3:\SU(2) = \text{Spin}(3) \to \SO(3)$. $(p,q)\in \SU(2)  \times \SU(2) =\text{Spin}(4) $ acts on $\mathbb H$ by
\[
x \mapsto pxq^{-1},
\]
which gives the double cover $\rho :\text{Spin}(4) \to \SO(4)$. Regard the real axis of $\mathbb{H}$ as the normal space of $x\in \partial X$, then 
\begin{align*}
i: \text{Spin}(3)  &\to \text{Spin}(4) \\
q&\mapsto (q,q)
\end{align*}
covers the embedding $i:SO(3)\to SO(4)$. Similarly we have a map
\begin{align*}
i^c: \text{Spin}^c(3):=S^1 \times  \text{Spin}(3) / \{\pm (1,I)\} &\to \text{Spin}^c(4)\\
[z,q]&\mapsto [z,q,q]
\end{align*}
that covers $i:SO(3)\to SO(4)$. Hence a $\text{spin}$($\text{spin}^c$) structure of $X$ induces a $\text{spin}$($\text{spin}^c$) structure of $\partial X$. Moreover, from the definition of $i^c$, the restriction of a $\text{spin}^c$ structure is compatible with the restriction of its determinant line bundle.

\begin{prop}\label{prop:spincRestriction}
Use the notations in Theorem \ref{thm:changeOfSpinc}. Then $\ss|_{\partial X_0}$ is the only $\text{spin}^c$ structure of $ \S^1 \times \S^2 $ such that the first Chern class of the determinant line bundle is zero, and $\ss'|_{D^2 \times \S^2}$ is the only $\text{spin}^c$ structure of $ D^2 \times \S^2 $ such that the first Chern class of the determinant line bundle is zero.
\end{prop}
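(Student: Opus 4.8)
The plan is to reduce both assertions to two facts already in hand: first, that on a manifold whose second integral cohomology has no $2$-torsion the assignment $\ss \mapsto c_1(\L)$ (first Chern class of the determinant line bundle) is injective on the set of $\text{spin}^c$ structures --- this is part of Remark~\ref{remark:spincline}; and second, that the determinant line bundles occurring here are trivial. Since $H^2(\S^1\times\S^2;\Z)\cong\Z$ and $H^2(D^2\times\S^2;\Z)\cong H^2(\S^2;\Z)\cong\Z$ are torsion-free, the injectivity applies to both pieces, so each of them carries \emph{at most one} $\text{spin}^c$ structure for which $c_1$ of the determinant line bundle vanishes. It then suffices to check that $\ss|_{\partial X_0}$ and $\ss'|_{D^2\times\S^2}$ are such structures.

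For $\ss|_{\partial X_0}$: note that $\partial X_0 = \partial N$ with $N = \S^1\times D^3$. The $\text{spin}^c$ structure $\ss$ restricts to $N$, where its determinant line bundle is $\L|_N$; but $N$ deformation retracts onto $\S^1$, so $H^2(N;\Z) = 0$ and $\L|_N$ is trivial. Since restriction of a $\text{spin}^c$ structure is compatible with restriction of its determinant line bundle (via the map $i^c$ above), the determinant line bundle of $\ss|_{\partial X_0} = (\ss|_N)|_{\partial N}$ is $\L|_{\partial N}$, which is trivial; in particular its first Chern class vanishes. By the first paragraph, $\ss|_{\partial X_0}$ is the unique $\text{spin}^c$ structure of $\S^1\times\S^2$ with this property.

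For $\ss'|_{D^2\times\S^2}$: by construction $\ss'|_{X_0} = \ss|_{X_0}$, hence $\L'|_{X_0} = \L|_{X_0}$, and in particular $\L'|_{\partial X_0} = \L|_{\partial X_0}$, which is trivial by the previous paragraph. The Mayer--Vietoris computation in the proof of Theorem~\ref{thm:changeOfSpinc} shows that $i_\partial^*\colon H^2(D^2\times\S^2;\Z)\to H^2(\S^1\times\S^2;\Z)$ is an isomorphism, and $i_\partial^*\bigl(c_1(\L'|_{D^2\times\S^2})\bigr) = c_1(\L'|_{\partial X_0}) = 0$, so $c_1(\L'|_{D^2\times\S^2}) = 0$. (Alternatively, $c_1(\L')$ is Poincar\'e dual to the surface $\Sigma\subset X_0$ from that proof, which is disjoint from $D^2\times\S^2$.) Again by the first paragraph, $\ss'|_{D^2\times\S^2}$ is the unique $\text{spin}^c$ structure of $D^2\times\S^2$ with vanishing determinant Chern class.

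The only delicate point --- the ``main obstacle,'' such as it is --- is the appeal to injectivity of $\ss \mapsto c_1(\L)$: this genuinely requires the absence of $2$-torsion in the second cohomology, which is why the statement is tailored to the elementary pieces $\S^1\times\S^2$ and $D^2\times\S^2$ rather than to general $3$- and $4$-manifolds. Once that is granted, the rest is bookkeeping with restriction maps and the Mayer--Vietoris sequence already established in the proof of Theorem~\ref{thm:changeOfSpinc}.
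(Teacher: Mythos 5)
Your argument is correct and follows essentially the same route as the paper: show that the relevant determinant line bundle has vanishing first Chern class by restriction from a piece with $H^2=0$ (for $\S^1\times\S^2$) or via the isomorphism $i_\partial^*$ (for $D^2\times\S^2$), then invoke Remark~\ref{remark:spincline} to get injectivity of $\ss\mapsto c_1(\L)$. One small but real point in your favor: for the $\S^1\times\S^2$ case the paper's proof asserts $H^2(\partial X_0;\Z)=0$ and concludes that $\ss|_{\partial X_0}$ is the \emph{unique} $\text{spin}^c$ structure on $\S^1\times\S^2$, both of which are incorrect ($H^2(\S^1\times\S^2;\Z)\cong\Z$, and $\text{spin}^c$ structures on a $3$-manifold form an $H^2$-torsor, so there are $\Z$-many); your version correctly uses only the torsion-freeness of $H^2(\S^1\times\S^2;\Z)$ to get injectivity of $c_1\circ\det$, which is exactly what the proposition as stated needs.
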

\begin{proof}
$\det(\ss|_{\partial X_0}) = \det(\ss)|_{\partial X_0}$ is the restriction of the trivial line bundle $ \det(\ss)|_{\S^1 \times D^3}$.  So $\det(\ss|_{\partial X_0})$ is trivial. $H^2(\partial X_0;\Z) = 0$ so by Remark \ref{remark:spincline} $\ss|_{\partial X_0}$ is the only $\text{spin}^c$ structure of $ \S^1 \times \S^2 $.

$\det(\ss'|_{D^2 \times \S^2})|_{\partial (D^2 \times \S^2)}=\det(\ss)|_{\S^1 \times \S^2} $ is trivial. Since the restriction $H^2(D^2 \times \S^2) \cong H^2( \S^1 \times\S^2) $ is an isomorphism, $c_1(\det(\ss'|_{D^2 \times \S^2})) = 0$. 
$H^2(D^2 \times \S^2;\Z)$ has no torsion so by Remark \ref{remark:spincline} $c_1\circ \det$ is injective. Hence $\ss'|_{D^2 \times \S^2}$ is the only $\text{spin}^c$ structure of $D^2 \times \S^2$ such that the first Chern class of the determinant line bundle is zero.
\end{proof}




\subsection{Seiberg-Witten equation, transversality results, and ASD operator}

\subsection{Positive scalar curvature}\label{subsection:PSCmetric}

A positive scalar curvature will give two desired properties: First, by the Weitzenb{\"o}ck formula, a non-negative scalar curvature on $3$- or $4$-manifolds leads solely to reducible solutions of the Seiberg-Witten equation (see \cite{kronheimer_mrowka_2007} (4.22)). Second, by the Weitzenb{\"o}ck formula and integration by parts, we have (see page 105 of \cite{Nicolaescu2000NotesOS})
\[
\int_M|\D_A\psi|^2dv_g = \int_M (|\nabla^A\psi|^2  + \frac{s}{4}|\psi|^2 +\frac{1}{2}\langle {\mathbf{c}}(F_A^+)\psi,\psi\rangle ) dv_g 
\]
where $A$ is a connection, $\D_A$ is the twisted Dirac operator, $s$ is the scalar curvature, and $\mathbf{c}$ is Clifford multiplication. So if $s$ is everywhere positive and $A$ is flat, the twisted Dirac operator would have trivial kernel.

It turns out that we can construct bullet metrics on $\S^1 \times D^3$ and $D^2\times \S^2$ such that the corresponding Levi-Civita connections have positive scalar curvature everywhere.

To construct the bullet metric on $D^2 \times S^2$, embed it in $\R^3\times \R^3$ such that the component $\S^2$ is standard sphere, and $D^2$ is the union of a standard semi-sphere $\S^2_+$ and a cylinder $\partial D^2 \times I$, which is the collar neighborhood of $\partial D^2$. One can perturb this  embedding to make it smooth, and the metric $g$ of $D^2\times S^2$ induced by the standard metric of $\R^3\times \R^3$ is so-called bullet metric.

One can compute the scalar curvature of this metric using the following formula:
\[
s= \sum_{i\neq j}\mathsf{sec}(e_i,e_j)
\]
where $\mathsf{sec}$ is the sectional curvature and $\{e_i\}$ is a set of orthonormal basis. The sectional curvature of $\S^2$ and $\S^2_+$ is positive. If two vectors lie in different copies of $\R^3$ in $\R^3\times \R^3$, the sectional curvature of the plane identified by these vectors is zero. This means that
\[
s({D^2\times S^2})= s({D^2}) + s({S^2}).
\]
Therefore, the scalar curvature is everywhere positive.

For $\S^1 \times D^3$, embed it in  $\R^2\times \R^4$ such that $\S^1$ is standard circle and $D^3$ is the union of a standard semi-sphere $\S^3_+$ and a cylinder $\partial D^3 \times I$. By the same reasoning and the fact that $\partial D^3=\S^3$ also has positive scalar curvature, the scalar curvature of $\S^1 \times D^3$ is everywhere positive.

\section{Apply ordinary gluing theory to $1$-surgery}
In ordinary gluing theory, one obtain the union ${N}_r$ of two manifolds ${N}_1$ and ${N}_2$ by gluing along their boundaries $N$, and consider the relation between monopoles over ${N}_1$ and ${N}_2$ and monopoles over the union ${N}_r$.

Given a pair of monopoles on ${N}_1$ and ${N}_2$, respectively, if they are compatible over boundaries, one can glue them to obtain a point of configuration space over the union ${N}_r$. It turns out that there exists a genuine monopole of ${N}_r$ near this point. Moreover, the space of genuine monopoles over the union ${N}_r$ is actually isotopic to the manifold of configurations obtained by gluing in this way.
 
 The proof of the global gluing theorem is divided to four steps: The \textbf{linear gluing theorem} will give an approximation of the kernel of boundary difference map. The \textbf{local gluing theorem} will describe the set of genuine monopoles in a neighborhood of each glued configuration point. The \textbf{local surjectivity theorem} will prove that, the set of such neighborhoods is a cover of the manifold of genuine monopoles. The \textbf{global gluing theorem} will prove that, the moduli space of genuine monopoles is homeomorphic to the moduli space of glued configuration points, if the obstruction space is trivial.
 
 In this section, we will follow the strategy in Nicolaescu's book \cite{Nicolaescu2000NotesOS}.  In our case, i.e, $N = \S^1 \times \S^2$, $N_2 = \S^1 \times D^3$ or $D^2\times \S^2$, one can just apply the linear gluing theorem and the local surjectivity theorem in charpter 4 of \cite{Nicolaescu2000NotesOS}, and prove the condition of the local gluing theorem is satisfied. However, the global gluing theorem in this situation is slightly different from what Nicolaescu presented.

\subsection{Abstract linear gluing results}
In this subsection, we review the abstract linear gluing results in section 4.1 of \cite{Nicolaescu2000NotesOS}.

It's natural to expect that, a longer neck of ${N}_r$ will narrow the difference between genuine monopoles and configurations obtained by gluing, since there should be no difference when the length of the neck $r=\infty$. So we first consider manifolds with necks of infinite length, say, $\hat{N}_1= {N}_1\cup_N N\times [0,\infty)$ and  $\hat{N}_2= {N}_2\cup_N N\times [0,\infty)$. Such manifolds are called cylindrical manifolds.

Suppose $\beta(t)$ is a smooth cutoff function such that $\beta(t)=0$ on $(-\infty,1/2]$ and $\beta(t) = 1$ on $[1,\infty)$. Set $\alpha(t)=1-\beta(t)$. These functions will be used to glue a pair of sections.

Denote by $\hat{E}$ a cylindrical bundle over a cylindrical manifold $\hat{N}$, that is, a vector bundle $\hat{E} \to \hat{N}$ together with a vector bundle ${E} \to {N}$ and a bundle isomorphism
\[
\hat{E}|_{N\times [0,\infty)} \to \pi^*E,
\]
where $ \pi: N\times [0,\infty) \to N$ is the projection map. Let $L^p(\hat{E})$ be the space of $L^p$-sections of $\hat{E}$. 
Let $L^p_{loc}(\hat{E})$ be the space of measurable sections $u$ such that $u\varphi \in L^p(\hat{E})$ for any smooth, compactly supported function $\varphi$ on $\hat{N}$. 
Denote by $\hat{u}$ an $L^2_{loc}$-section of $\hat{E}$. If there exists an $L^2_{loc}$-cylindrical section $\hat{u}_0$ such that
\[
\hat{u}-\hat{u}_0 \in L^2(\hat{E}),
\]
then $\hat{u}$ is called \textbf{asymptotically cylindrical} (or \textbf{a-cylindrical}). Define the asymptotic value of $\hat{u}$ to be
\[
\partial_\infty \hat{u} := \partial_\infty \hat{u}_0.
\]
Let $ L^2_\mu(\hat{E}) = \{ u\in  L^2(\hat{E}); \| u|_{\hat{N} \setminus N\times [0,\infty)}\|_{L^2} + \| u|_{N\times [0,\infty)} \cdot \e^{\mu t} \|_{L^2} < \infty\}$. The supremum of all $\mu \geq 0$ such that 
\[
\hat{u} -\hat{u}_0 \in  L^2_\mu(\hat{E})
\] 
is called the \textbf{decay rate} of the a-cylindrical section $\hat{u}$.

The norm on the space of a-cylindrical sections is defined by
\[
\|\hat{u}\|_{ex} = \|\hat{u}-\hat{u}_0\|_{L^2} + \|\partial_\infty \hat{u}\|_{L^2}
\]
The resulting Hilbert space is called $L_{ex}^2$.


Given a pair of compatible cylindrical sections $\hat{u}_i$ of $\hat{E}_i$, i.e they share the same constant value over the neck, they can be glued to form a section $ \hat{u}_1  \#_r  \hat{u}_2$ of $\hat{E}_1  \#_r  \hat{E}_2$. If $\hat{u}_i$ are just compatible $L_{ex}^2$-sections, i.e they are a-cylindrical sections with identical asymptotic values $\partial_\infty \hat{u}_1 = \partial_\infty \hat{u}_2$, they should be modified by cutoff functions first. Let $\hat{u}_i(r)$ be the same section as $\hat{u}_i$ outside the neck, and on the neck
\begin{equation}\label{equ:cut}
\hat{u}_i(r)(t) = \alpha(t-r)\hat{u}_i + \beta(t-r)\partial_\infty \hat{u}_i.
\end{equation}
When $t<r$, $\hat{u}_i(r) = \hat{u}_i$, and when $t>r+1$, $\hat{u}_i(r)$ is just the asymptotic value of $ \hat{u}_i$. Thus $ \hat{u}_i(r)$ is an approximation of $ \hat{u}_i$ as $r\to \infty$. Now these genuine cylindrical sections can be glued along the neck, so we define
\begin{equation}\label{equ:paste}
\hat{u}_1  \#_r  \hat{u}_2 := \hat{u}_1(r)  \#_r  \hat{u}_2(r)
\end{equation}

In the following description, all verifications of smoothness, Fredholmness and exactness are obmitted. See Section 4.3 of Nicolaescu's book for details.

Let $L^{m,p}$ be the space of sections with finite Sobolev norm $\|\cdot\|_{m,p}$.
Let $\hat{\sigma}$ be a $\text{spin}^c$ structure of $\hat{N}$ such that it induces a $\text{spin}^c$ structure $\sigma$ of $N$. Denote by $\mathcal{C}_\sigma$ the space of configurations in $L^{2,2}$ over the $3$-manifold $N$, by
\[
\mathcal{Z}_\sigma \subset \mathcal{C}_\sigma
\]
the set of monopoles (solutions of Seiberg-Witten equations) on $N$, and by
\[
\mathfrak{M}_\sigma = \mathcal{Z}_\sigma/ \mathcal{G}_\sigma
\]
the moduli space of monopoles on $N$.

Define
\begin{equation}\label{equ:defCsw}
\hat{\mathcal{C}}_{\mu ,sw} := \partial_\infty^{-1}(\mathcal{Z}_\sigma)
\end{equation}
and 
\[
\hat{\mathcal{Y}}_\mu := L_\mu^{1,2}(\hat{S}_{\hat{\sigma}}^- \oplus \mathbf{i}\Lambda_+^2T^*\hat{N}).
\]
The Seiberg-Witten equations give the Seiberg-Witten map
\begin{align*}
\widehat{SW}: \hat{\mathcal{C}}_{\mu ,sw} &\to \hat{\mathcal{Y}}_\mu,\\
(\hat{\psi},\hat{A}) &\mapsto \D_{\hat{A}}\hat{\psi}\oplus (\sqrt{2}(F^+_{\hat{A}} - \frac{1}{2}\hat{\mathbf{c}}^{-1}(q(\hat{\psi}))),
\end{align*}
where $\D_{\hat{A}}$ is the Dirac operator twisted by the connection $\hat{A}$, and $ \hat{\mathbf{c}}$ is the Clifford multiplication on $\hat{N}$.

We will use the following notation:
\[
\widehat{\mathcal{G}}_{\mu,ex} := \{ \hat{u} \in L^{ 3,2}_{\mu,ex}(\hat{N},\C); | \hat{u}(p)|=1\text{  } \forall p\in \hat{N}\}
\]
\[
\widehat{\M}_\mu := \widehat{SW}^{-1}(0) / \widehat{\mathcal{G}}_{\mu,ex}.
\]

$\hat{\mathsf{C}}_0=(\hat{\psi}_0 , \hat{A}_0)$: A fixed smooth finite energy monopole on $\hat{N}$. $\hat{\mathsf{C}}_0$ modulo a gauge transformation is in $\hat{\mathcal{C}}_{\mu ,sw}$  (see section 4.2.4 of Nicolaescu's book \cite{Nicolaescu2000NotesOS}). So in this paper we always assume that $\hat{\mathsf{C}}_0 \in \hat{\mathcal{C}}_{\mu ,sw}$.

${\mathsf{C}}_\infty$: A fixed smooth finite energy monopole on ${N}$.

$\widehat{\underline{SW}}_{\hat{\mathsf{C}}_0}$: The linearization of $\widehat{SW}$ at $\hat{\mathsf{C}}_0$.

As a Lie group, the component of $\mathbf{1}$ of $\hat{\mathcal{G}}_{\mu,ex}$ consists of elements that can be written as $\e^{\mathbf{i}f}$ where $f\in  L_{\mu,ex}^{3,2}(\hat{N},\mathbf{i}\R)$. Recall that we have fixed $\hat{\mathsf{C}}_0$, so the gauge action gives a map
\begin{align*}
\hat{\mathcal{G}}_{\mu,ex} &\to \hat{\mathcal{C}}_{\mu,sw} \\
\hat{u} &\mapsto \hat{u}  \cdot \hat{\mathsf{C}}_0.
\end{align*}
Denote the stabilizer of $\hat{\mathsf{C}}_0$ under the gauge action by $\hat{G}_0$. The differential of the above map is 
\begin{align*}
\mathfrak{L}_{\hat{\mathsf{C}}_0} :T_{\mathbf{1}}\hat{\mathcal{G}}_{\mu,ex} &\to T_{\hat{\mathsf{C}}_0}\hat{\mathcal{C}}_{\mu,sw}\\
\mathbf{i}f &\mapsto (\mathbf{i}f\hat{\psi}_0,-2\mathbf{i} df)
\end{align*}

We have three differential complexes:
\[\label{equ:complexF}
0\to T_1 \hat{\mathcal{G}}_{\mu} 
\xrightarrow{\mathfrak{L}_{\hat{\mathsf{C}}_0}} T_{\hat{\mathsf{C}}_0} \partial_\infty^{-1}(\mathsf{C}_\infty) 
\xrightarrow{\widehat{\underline{SW}}_{\hat{\mathsf{C}}_0}} T_0\mathcal{Y}_\mu
\to 0
\tag{$F_{\hat{\mathsf{C}}_0}$}
\]
\[\label{equ:complexK}
0
\to T_1 \hat{\mathcal{G}}_{\mu,ex} 
\xrightarrow{\frac{1}{2}\mathfrak{L}_{\hat{\mathsf{C}}_0}} T_{\hat{\mathsf{C}}_0} \hat{\mathcal{C}}_{\mu,sw} 
\xrightarrow{\widehat{\underline{SW}}_{\hat{\mathsf{C}}_0}} T_0\mathcal{Y}_\mu
\to 0
\tag{$\widehat{\mathcal{K}}_{\hat{\mathsf{C}}_0}$}
\]
\[\label{equ:complexB}
0\to T_1\mathcal{G}_\sigma 
\xrightarrow{\frac{1}{2}\mathfrak{L}_{{\mathsf{C}}_\infty}} T_{\mathsf{C}_\infty}\mathcal{Z}_\sigma
\to 0
\to 0
\tag{$B_{\hat{\mathsf{C}}_0}$}
\]
In the category of differential complexes, it's easy to verify that
\begin{equation}\label{equ:shortExact}
0\to \text{\ref{equ:complexF}}
\stackrel{i}{\hookrightarrow} \text{\ref{equ:complexK}} 
\stackrel{\partial_\infty}{\twoheadrightarrow} \text{\ref{equ:complexB}} 
\to 0
\tag{\textbf E}
\end{equation}
is an exact sequence. Namely, each column of the diagram 
\begin{equation}\label{equ:diagram} 
\xymatrix{
&
0\ar[d] &
0\ar[d] & 
0 \ar[d]& 
\\
0\ar[r] &
T_1 \hat{\mathcal{G}}_{\mu}  \ar[d] \ar[r]^(0.4){\frac{1}{2}\mathfrak{L}_{\hat{\mathsf{C}}_0}} &
T_{\hat{\mathsf{C}}_0} \partial_\infty^{-1}(\mathsf{C}_\infty) \ar[d] \ar[r]^(0.6){\widehat{\underline{SW}}_{\hat{\mathsf{C}}_0}} &
T_0\mathcal{Y}_\mu \ar[d]^{=}\ar[r] & 
0\\
0\ar[r] &
T_1 \hat{\mathcal{G}}_{\mu,ex}  \ar[d] \ar[r]^(0.5){\frac{1}{2}\mathfrak{L}_{\hat{\mathsf{C}}_0}} &
T_{\hat{\mathsf{C}}_0} \hat{\mathcal{C}}_{\mu,sw}  \ar[d]^{\partial_\infty^0} \ar[r]^(0.5){\widehat{\underline{SW}}_{\hat{\mathsf{C}}_0}} &
T_0\mathcal{Y}_\mu \ar[d]\ar[r] &
 0 \\
0\ar[r] &
T_1\mathcal{G}_\sigma  \ar[d] \ar[r]^(0.4){\frac{1}{2}\mathfrak{L}_{{\mathsf{C}}_\infty}} &
T_{\mathsf{C}_\infty}\mathcal{Z}_\sigma \ar[d] \ar[r]&
0 \ar[d]\ar[r] & 
0 \\
&
0 &
0& 
0& 
\\
}
\tag{\textbf{D}}
\end{equation}
is exact.
Set 
\[
H^i_{\hat{\mathsf{C}}_0} := H^i( \widehat{\mathcal{K}}_{\hat{\mathsf{C}}_0}).
\]
For $i=0$, observe that 
\[
H^0_{\hat{\mathsf{C}}_0}  \cong T_1\hat{G}_0
\]
is the tangent space of the stabilizer of $\hat{\mathsf{C}}_0$ under gauge action. It is one dimensional if $\hat{\mathsf{C}}_0$ is reducible and trivial otherwise. For $i=1$, observe that $\dim_\R(H^1_{\hat{\mathsf{C}}_0})$ is the dimension of the formal tangent space of $\widehat{\M}_\mu$ at $[\hat{\mathsf{C}}_0]$. For $i=2$, $H^2_{\hat{\mathsf{C}}_0}$ is called the \textbf{obstruction space} at $\hat{\mathsf{C}}_0$.

From the diagram \ref{equ:diagram} 
we obtain a long exact sequece
\begin{equation}\label{equ:longExact}
\begin{tikzcd}
& H^0(F_{\hat{\mathsf{C}}_0}) \arrow[d]
& H^1(F_{\hat{\mathsf{C}}_0}) \arrow[d]
& H^2(F_{\hat{\mathsf{C}}_0}) \arrow[d] & \\
\arrow[r, phantom, ""{coordinate, name=Y}] & H^0_{\hat{\mathsf{C}}_0}  \arrow[d]\arrow[r, phantom, ""{coordinate, name=Z}]
&  H^1_{\hat{\mathsf{C}}_0}  \arrow[d]\arrow[r, phantom, ""{coordinate, name=T}]
& H^2_{\hat{\mathsf{C}}_0} \arrow[d] &\\
0   \arrow[ruu,
"",
rounded corners,
to path={
-| (Y) [near end]\tikztonodes
|-  (\tikztotarget)}]  
& H^0(B_{\hat{\mathsf{C}}_0})  \arrow[ruu,
"",
rounded corners,
to path={
-| (Z) [near end]\tikztonodes
|-  (\tikztotarget)}]  
& H^1(B_{\hat{\mathsf{C}}_0})\arrow[ruu,
"",
rounded corners,
to path={
-| (T) [near end]\tikztonodes
|-  (\tikztotarget)}]  
& 0 & 
\end{tikzcd}
\tag{\textbf L}
\end{equation}

$\hat{\mathsf{C}}_0$ is called \textbf{regular} if $H^2_{\hat{\mathsf{C}}_0}= 0$, and \textbf{strongly regular} if $H^2(F_{\hat{\mathsf{C}}_0}) = 0$. Note that by the long exact sequance, strong regularity implies regularity.

The integer
\[
d(\hat{\mathsf{C}}_0) := -\chi(\widehat{\mathcal{K}}_{\hat{\mathsf{C}}_0}) =- \dim_\R H^0_{\hat{\mathsf{C}}_0}+  \dim_\R H^1_{\hat{\mathsf{C}}_0} - \dim_\R H^2_{\hat{\mathsf{C}}_0}
\]
is called the \textbf{virtual dimension} at $[\hat{\mathsf{C}}_0]$ of the moduli space $\widehat{\M}_\mu$. If $\hat{\mathsf{C}}_0$ is regular irreducible, $\widehat{\M}_\mu$ is smooth at $\hat{\mathsf{C}}_0$, and
\[
d(\hat{\mathsf{C}}_0)=-0+ \dim_\R H^1_{\hat{\mathsf{C}}_0}-0
\]
is indeed the dimension of the tangent space of $\widehat{\M}_\mu$ at $[\hat{\mathsf{C}}_0]$. On the other hand, if $\hat{\mathsf{C}}_0$ is regular reducible, we have 
\[
d(\hat{\mathsf{C}}_0)=-1+ \dim_\R H^1_{\hat{\mathsf{C}}_0}-0
\]
So $\dim_\R H^1_{\hat{\mathsf{C}}_0} = d(\hat{\mathsf{C}}_0)+1$. The difference between irreducibles and reducibles, comes from the fact that the orbit of irreducible $\hat{\mathsf{C}}_0$ is $1$-dimensional in $\hat{\mathcal{C}}_{\mu,sw}$, given by the action of constant gauge, while the constant gauge acs on reducibles trivially.

The $L^2_\mu$-adjoint of $\mathfrak{L}_{\hat{\mathsf{C}}_0}$ is
\begin{equation}\label{equ:L*}
\mathfrak{L}^{*_\mu}_{\hat{\mathsf{C}}_0} : (\dot\psi ,\mathbf{i}\dot a) \mapsto -2
\mathbf{i} d^{*_\mu} \dot a - \mathbf{i} \imaginary \langle\psi,\dot \psi\rangle_\mu.
\end{equation}
Now define 
\[
\hat{\mathcal{T}}_{\hat{\mathsf{C}}_0,\mu} := \widehat{\underline{SW}}_{\hat{\mathsf{C}}_0} \oplus \frac{1}{2}\mathfrak{L}^{*_\mu}_{\hat{\mathsf{C}}_0} :  L_\mu^{2,2}(\hat{\S}_{\hat{\sigma}}^+ \oplus \mathbf{i}T^*\hat{N}) \to \hat{\mathcal{Y}}_\mu \oplus L_\mu^{1,2}(N,\mathbf{i}\R).
\]
We can deduce that (see the proof of Lemma 4.3.19 of Nicolaescu's book)
\begin{equation}\label{equ:Tinf}
\vec{\partial}_\infty\hat{\mathcal{T}}_{\hat{\mathsf{C}}_0,\mu} =
\mathcal{T}_{{\mathsf{C}}_\infty,\mu } =
\begin{bmatrix}
\underline{SW}_{{\mathsf{C}}_\infty}  & -\frac{1}{2}\mathfrak{L}_{{\mathsf{C}}_\infty} \\[10pt]
\frac{1}{2}\mathfrak{L}^{*}_{{\mathsf{C}}_\infty} & -2\mu
\end{bmatrix}
\end{equation}
It turns out that we can remove the dependence on the choice of $\mu$, such that everything is independant of $\mu$ (Page 387 of \cite{Nicolaescu2000NotesOS}). Set $\mu = 0$ formally:
\begin{equation}\label{defOfTWithoutMu}
\mathcal{T}_{\hat{\mathsf{C}}_0} := \widehat{\underline{SW}}_{\hat{\mathsf{C}}_0} \oplus \frac{1}{2}\mathfrak{L}^*_{\hat{\mathsf{C}}_0}
\end{equation}
From the description \ref{equ:Tinf} above of $\mathcal{T}_{{\mathsf{C}}_\infty,\mu }$ ($\mu=0$), we have a decomposition
\[
\ker\mathcal{T}_{{\mathsf{C}}_\infty} = T_{{\mathsf{C}}_\infty}\M_\sigma \oplus T_1G_\infty,
\]
where $G_\infty$ is the stabilizer of $\hat{\mathsf{C}}_\infty$ under gauge action.
Denote the two components of the boundary map
\[
\partial_\infty: \ker_{ex}\hat{\mathcal{T}}_{\hat{\mathsf{C}}_0} \to \ker\mathcal{T}_{{\mathsf{C}}_\infty} = T_{{\mathsf{C}}_\infty}\M_\sigma \oplus T_1G_\infty
\]
by
\begin{align*}
\partial_\infty^0:  \ker_{ex}\hat{\mathcal{T}}_{\hat{\mathsf{C}}_0} &\to T_1G_\infty \\
\partial_\infty^c:  \ker_{ex}\hat{\mathcal{T}}_{\hat{\mathsf{C}}_0} &\to  T_{{\mathsf{C}}_\infty}\M_\sigma.
\end{align*}
Explictly, for $(\hat{\psi},\hat{\alpha}) \in L^{2,2}_{ex}(\hat{\S}_{\hat{\sigma}}^+ \oplus \mathbf{i}T^*\hat{N})$, if $\hat \alpha =\mathbf i \alpha + \mathbf i fdt$ on the neck $\R \times {N}$, where $\alpha(t)$ is a $1$-form on $N$ for each $t$, then
\begin{align}
\partial_\infty^0(\hat{\psi},\hat{\alpha}) &= \mathbf i\partial_\infty f \in T_1G_\infty \label{equ:partial0}\\
\partial_\infty^c(\hat{\psi},\hat{\alpha}) &=  (\partial_\infty \hat{\psi},{\partial_\infty \alpha}) \in T_{{\mathsf{C}}_\infty}\M_\sigma.
\end{align}

\subsection{Local gluing theorem}
Now we discuss how to apply the results in section 4.5 of Nicolaescu's book \cite{Nicolaescu2000NotesOS} to our cases.

Let's define
\[
\mathfrak{X}^k_+:= L^{k,2}(\hat{\S}^+_{\hat \sigma} \oplus \mathbf{i} T^*\hat{N}(r)), 
\mathfrak{X}^k_-:= L^{k,2}(\hat{\S}^-_{\hat \sigma} \oplus \mathbf{i} \Lambda^2_+T^*\hat{N}(r)), 
\]
\[
\mathfrak{X}^k := \mathfrak{X}^k_+ \oplus \mathfrak{X}^k_-.
\]
Define
\[
\hat{L}_r:= 
\begin{bmatrix}
0 &\hat{\mathcal T}^*_r \\
\hat{\mathcal T}_r & 0
\end{bmatrix}
:\mathfrak{X}^0 \to  \mathfrak{X}^0.
\]
We want to use the eigenspace corresponds to very small eigenvalues to approximate the kernel of this operator. Let $\H_r$ be the subspace of $\mathfrak{X}^0$ spanned by
\[
\{ v; \hat{L}_r v = \lambda v, |\lambda| < r^{-2}\}.
\]
Let $\mathcal{Y}_r$ be the orthogonal complement of $\H_r$ in $\mathfrak{X}^0$. Let $\H_r^\pm$ be the orthogonal projection of $\H_r$ to $\mathfrak{X}^0_\pm$.  Let $\mathcal{Y}_r^\pm$ be the orthogonal projection of $\mathcal{Y}_r$ to $\mathfrak{X}^0_\pm$.

Each row and column of the following diagrams is asymptotically exact (see page 434 of Nicolaescu's book \cite{Nicolaescu2000NotesOS}).

Virtual tangent space diagram:
\begin{equation}\label{equ:3T} 
\xymatrix{
&
0\ar[d] &
0\ar[d] & 
0 \ar[d]& 
\\
0\ar[r] &
\ker\Delta_+^c \ar[d] \ar[r]^{S_r} &
H^1_{\hat{\mathsf{C}}_1}\oplus H^1_{\hat{\mathsf{C}}_2}\ar[d] \ar[r]^{\Delta_+^c} &
L_1^+ + L_2^+ \ar[d]\ar[r] & 
0\\
0\ar[r] &
{\H}_r^+ \ar[d] \ar[r]^(0.3){S_r} &
\ker_{ex}\hat{\mathcal{T}}_{\hat{\mathsf{C}}_1}\oplus \ker_{ex}\hat{\mathcal{T}}_{\hat{\mathsf{C}}_2} \ar[d]^{\partial_\infty^0} \ar[r]^(0.64){\Delta_+^c} &
\hat{L}_1^+ + \hat{L}_2^+ \ar[d]\ar[r] &
 0 \\
0\ar[r] &
\ker\Delta_+^0 \ar[d] \ar[r]^{S_r} &
\mathfrak{C}_1^+\oplus \mathfrak{C}_2^+ \ar[d] \ar[r]^{\Delta_+^0} &
\mathfrak{C}_1^+ + \mathfrak{C}_2^+ \ar[d]\ar[r] & 
0 \\
&
0 &
0& 
0& 
\\
}
\tag{\textbf{T}}
\end{equation}

Obstruction space diagram:
\begin{equation}\label{equ:3O} 
\xymatrix{
&
0\ar[d] &
0\ar[d] & 
0 \ar[d]& 
\\
0\ar[r] &
\ker\Delta_-^c \ar[d] \ar[r]^(0.35){S_r} &
H^2(F_{\hat{\mathsf{C}}_1}) \oplus H^2(F_{\hat{\mathsf{C}}_2}) \ar[d] \ar[r]^(0.62){\Delta_-^c} &
L_1^- + L_2^- \ar[d]\ar[r] & 
0\\
0\ar[r] &
{\H}_r^- \ar[d] \ar[r]^(0.3){S_r} &
\ker_{ex}\hat{\mathcal{T}}^*_{\hat{\mathsf{C}}_1}\oplus \ker_{ex}\hat{\mathcal{T}}^*_{\hat{\mathsf{C}}_2} \ar[d] \ar[r]^(0.64){\Delta_+^c} &
\hat{L}_1^- + \hat{L}_2^- \ar[d]\ar[r] &
 0 \\
0\ar[r] &
\ker\Delta_-^0 \ar[d] \ar[r]^{S_r} &
\mathfrak{C}_1^-\oplus \mathfrak{C}_2^- \ar[d] \ar[r]^{\Delta_-^0} &
\mathfrak{C}_1^- + \mathfrak{C}_2^- \ar[d]\ar[r] & 
0 \\
&
0 &
0& 
0& 
\\
}
\tag{\textbf{O}}
\end{equation}
where 
\[
L_i^+ := \partial^c_\infty \ker_{ex}\hat{\mathcal{T}}_{\hat{C}_i}\subset T_{C_\infty}\M_\sigma
\]
\[
 \mathfrak{C}_i^+ := \partial^0_\infty \ker_{ex}\hat{\mathcal{T}}_{\hat{C}_i}\subset T_1G_\infty
\]
\[
L_i^- := \partial^c_\infty \ker_{ex}\hat{\mathcal{T}}^*_{\hat{C}_i}\subset T_{C_\infty}\M_\sigma
\]
\[
 \mathfrak{C}_i^- := \partial^0_\infty \ker_{ex}\hat{\mathcal{T}}^*_{\hat{C}_i}\subset T_1G_\infty
\]

Here is a short explanation of the middle column of the diagram \ref{equ:3T}: We can first look at the beginning of the long exact sequence \ref{equ:longExact}:
\[
\cdots
\to H^0_{\hat{\mathsf{C}}_i} = T_1G_i
\stackrel{\partial_\infty}{\to} H^0(B_{\hat{\mathsf{C}}_0}) = T_1G_\infty
\stackrel{\delta}{\to} H^1(F_{\hat{\mathsf{C}}_0}) = \ker_{\mu}\hat{\mathcal{T}}_{\hat{\mathsf{C}}_i}
\stackrel{\phi}{\to} H^1_{\hat{\mathsf{C}}_i}
\to H^1(B_{\hat{\mathsf{C}}_0})
\to \cdots
\]
Consider $\ker_{ex}\hat{\mathcal{T}}_{\hat{\mathsf{C}}_i} \supset \ker_{\mu}\hat{\mathcal{T}}_{\hat{\mathsf{C}}_i}$. 
Intuitively, $\ker_{\mu}\hat{\mathcal{T}}_{\hat{\mathsf{C}}_i}$ is the tangent space of ``monopoles in $L_{\mu}$ modulo the action of the gauge group in $L_\mu$'', $\ker_{ex}\hat{\mathcal{T}}_{\hat{\mathsf{C}}_i} $ is the tangent space of ``monopoles in $L_{ex}$ modulo the action of the gauge group in $L_\mu$'', and $H^1_{\hat{\mathsf{C}}_i}$ is the tangent space of ``monopoles in $L_{ex}$ modulo the action of the gauge group in $L_{ex}$''. 
Thus the map from $\ker_{ex}\hat{\mathcal{T}}_{\hat{\mathsf{C}}_i}$ to $H^1_{\hat{\mathsf{C}}_i}$ is surjective with the same kernel as $\ker \phi = T_1(G_\infty/\partial_\infty \hat{G}_i)$ (see Lemma 4.3.25 of Nicolaescu's book \cite{Nicolaescu2000NotesOS} for details), and this kernel is $ \mathfrak{C}_i^+$ (see the proof of Propsition \ref{prop:trivialObs}).

\begin{remark}\label{rem:imL(kerLinfty)}
$\delta$ is nontrivial if and only if $\hat{\mathsf{C}}_i$ is irreducible and ${\mathsf{C}}_\infty$ is reducible. We assume this is the case. Then $\ker \phi =T_1(G_\infty/\partial_\infty \hat{G}_0) = \R$ is generated by constant function $\mathbf{i}f \in  T_1G_\infty$.

Now consider the definition of the connecting homomorphism $\delta$. We can choose the preimage of $\mathbf{i}f$ in $T_1 \hat{\mathcal{G}}_{\mu,ex} $ to be the constant function $\mathbf{i}\hat{f}$, or we can choose the preimage to be $ \mathbf{i} \beta(t-r)\hat f$. 
In first case, it's sent to $(\mathbf{i}\hat{f}\hat{\psi},0) \in T_{\hat{\mathsf{C}}_0} \partial_\infty^{-1}(\mathsf{C}_\infty) $, while in the second case, it's sent to $(\mathbf{i}\beta(t-r)\hat{f}\hat{\psi},2\mathbf{i} g dt)$, where $gdt =d (\beta(t-r)\hat{f})$ is a bump function aroud $t=r$. 
These two certainly represent the same class in $H^1(F)$, but only the first one is harmonic and hence in $\ker_{\mu}\hat{\mathcal{T}}_{\hat{\mathsf{C}}_i}$ (By (4.2.2) and Example 4.1.24 of Nicolaescu's book \cite{Nicolaescu2000NotesOS}, elements in $\ker_{\mu}\hat{\mathcal{T}}_{\hat{\mathsf{C}}_i}$ must be harmonic without any $dt$-terms). 
However, the second one, $(\mathbf{i}\beta(t-r)\hat{f}\hat{\psi},2\mathbf{i} g dt)$, shows explicitly that the map $\partial^0_\infty$ in \ref{equ:partial0} is the inverse of $\delta$.
\end{remark}

Here is a short explanation of the middle column of the diagram
\ref{equ:3O}:
$H^2(F_{\hat{\mathsf{C}}_i}) = \ker_{\mu}\hat{\mathcal{T}}^*_{\hat{\mathsf{C}}_i}$ since every self dual $2$-form on $\hat{N}_i$ is in $L_\mu$. On the other hand, the kernel of $\mathfrak{L}_{\hat{\mathsf{C}}_i} $ is exactly $T_1G_i$ which is not in $L_\mu$ (they are constant functions). Hence 
\[
\ker_{ex}\hat{\mathcal{T}}^*_{\hat{C}_i}= \ker_{ex} (\widehat{\underline{SW}}^*_{\hat{\mathsf{C}}_i} \oplus \frac{1}{2}\mathfrak{L}_{\hat{\mathsf{C}}_i})\]
decomposes to the direct sum of $H^2(F_{\hat{\mathsf{C}}_i}) $ and $\mathfrak{C}_i^- = T_1G_i$.

The virtual tangent space and obstruction space will give all monopoles of $\hat{N}(r)$ in a small neighborhood of $\hat{\mathsf{C}}_r$ in its slice:
\begin{theorem}[\cite{Nicolaescu2000NotesOS} Theorem 4.5.7]\label{thm:lgt}
For large enough $r$, the set
\[
\{\hat{\mathsf{C}}; \hat{\mathsf{C}} \text{ are monopoles on } \hat{N}(r), \mathfrak{L}^{*}_{\hat{\mathsf{C}}_r} (\hat{\mathsf{C}} - \hat{\mathsf{C}}_r) = 0, \|\hat{\mathsf{C}} - \hat{\mathsf{C}}_r\|_{2,2} \leq r^{-3}\}
\]
is in one-to-one correspondence with the set 
\[
\{
\hat{\mathsf{C}}_r + \underline{\hat{\mathsf{C}}}_0\oplus \underline{\hat{\mathsf{C}}}^\bot; \|\underline{\hat{\mathsf{C}}}_0\|_{2,2}\leq r^{-3}, \kappa_r (\underline{\hat{\mathsf{C}}}_0) = 0, \underline{\hat{\mathsf{C}}}^\bot = \Phi(\underline{\hat{\mathsf{C}}}_0)
\}
\]
where
\begin{align*}
\hat{\mathsf{C}}_r  &= \hat{\mathsf{C}}_1 \#_r \hat{\mathsf{C}}_2\\
\underline{\hat{\mathsf{C}}}_0 &\in \H_r^+ \\
\underline{\hat{\mathsf{C}}}^\bot &\in \mathcal{Y}_r^-\\
\kappa_r : B_0(r^{-3})\subset \H_r^+ &\to \H_r^- \\
\Phi :B_0(r^{-3})\subset \H_r^+ &\to \mathcal{Y}_r^-
\end{align*}
\end{theorem}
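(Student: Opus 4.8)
The plan is to prove this by a finite-dimensional (Lyapunov--Schmidt) reduction centred at the approximate monopole $\hat{\mathsf{C}}_r=\hat{\mathsf{C}}_1\#_r\hat{\mathsf{C}}_2$, implemented as a contraction-mapping argument in the Coulomb slice cut out by $\mathfrak{L}^{*}_{\hat{\mathsf{C}}_r}$. A slice perturbation $\underline{\hat{\mathsf{C}}}$ (so $\mathfrak{L}^{*}_{\hat{\mathsf{C}}_r}\underline{\hat{\mathsf{C}}}=0$) makes $\hat{\mathsf{C}}_r+\underline{\hat{\mathsf{C}}}$ a monopole exactly when
\[
\hat{\mathcal{T}}_r\,\underline{\hat{\mathsf{C}}}+N_r(\underline{\hat{\mathsf{C}}})=\mathfrak{e}_r,
\]
where $\hat{\mathcal{T}}_r=\mathcal{T}_{\hat{\mathsf{C}}_r}$ is the linearization of (\ref{defOfTWithoutMu}) based at $\hat{\mathsf{C}}_r$, $N_r$ collects the quadratic terms of the Seiberg--Witten map (the Clifford cross-term $\mathbf{c}(a)\psi$ and the quadratic spinor term $q(\psi)$), and $\mathfrak{e}_r=-\widehat{SW}(\hat{\mathsf{C}}_r)$ measures the failure of the spliced configuration to solve the equations. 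The goal is to solve this for $\|\underline{\hat{\mathsf{C}}}\|_{2,2}\le r^{-3}$ and to exhibit the solution set, for $r$ large, as the graph of $\Phi$ over the zero locus of $\kappa_r$.

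First I would record three estimates, all uniform in $r$. (i) $\mathfrak{e}_r$ decays faster than any power of $r$ as $r\to\infty$: the spliced configuration $\hat{\mathsf{C}}_r$ already solves the equations outside the length-$1$ band where the cutoffs of (\ref{equ:cut})--(\ref{equ:paste}) act, and on that band each $\hat{\mathsf{C}}_i$ differs from its common asymptotic limit $\mathsf{C}_\infty$ by a term of size $O(e^{-\mu r})$ --- the positive decay rate $\mu$ being available because, for the positive-scalar-curvature metrics of Section~\ref{subsection:PSCmetric}, $\mathsf{C}_\infty$ is a nondegenerate (Morse--Bott) monopole on $N=\S^1\times\S^2$ and the translation-invariant neck operator therefore has no small nonzero spectrum. (ii) The nonlinearity satisfies $\|N_r(x)-N_r(y)\|_{1,2}\le C\,(\|x\|_{2,2}+\|y\|_{2,2})\,\|x-y\|_{2,2}$ with $C$ independent of $r$, by the Sobolev multiplication $L^{2,2}\cdot L^{2,2}\hookrightarrow L^{1,2}$ on a $4$-manifold together with uniform control of the geometry along the stretched neck. (iii) On the orthogonal complement $\mathcal{Y}_r$ of $\H_r$ the operator $\hat{\mathcal{T}}_r$ (equivalently $\hat{L}_r$) is invertible with inverse of operator norm $\le Cr^{2}$ --- immediate from the definition of $\H_r$ as the span of the eigenvectors of $\hat{L}_r$ with $|\lambda|<r^{-2}$, once one knows that $\H_r$ has dimension bounded independently of $r$ and that $\H_r^{\pm}$ are, via the splicing map $S_r$, uniformly close to $H^1_{\hat{\mathsf{C}}_1}\oplus H^1_{\hat{\mathsf{C}}_2}$ and to $H^2(F_{\hat{\mathsf{C}}_1})\oplus H^2(F_{\hat{\mathsf{C}}_2})$ respectively; this last statement is exactly the asymptotic exactness of the diagrams (\ref{equ:3T}) and (\ref{equ:3O}).

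Granting these, I would run the reduction. Write $\underline{\hat{\mathsf{C}}}=\underline{\hat{\mathsf{C}}}_0\oplus\underline{\hat{\mathsf{C}}}^{\bot}$ with $\underline{\hat{\mathsf{C}}}_0\in\H_r^{+}$, and let $\Pi$ denote orthogonal projection onto $\H_r^{-}$. Applying $1-\Pi$ to the monopole equation and inverting $\hat{\mathcal{T}}_r$ on $\mathcal{Y}_r$ recasts its component off $\H_r^{-}$ as the fixed-point equation
\[
\underline{\hat{\mathsf{C}}}^{\bot}=\hat{\mathcal{T}}_r^{-1}(1-\Pi)\bigl(\mathfrak{e}_r-N_r(\underline{\hat{\mathsf{C}}}_0\oplus\underline{\hat{\mathsf{C}}}^{\bot})\bigr)=:\mathcal{F}_{\underline{\hat{\mathsf{C}}}_0}(\underline{\hat{\mathsf{C}}}^{\bot}).
\]
On the ball $\{\|\cdot\|_{2,2}\le r^{-3}\}$, estimates (i)--(iii) make $\mathcal{F}_{\underline{\hat{\mathsf{C}}}_0}$ a self-map and a contraction once $r$ is large --- the $Cr^{2}$ loss of the inverse being beaten both by the faster-than-polynomial decay of $\mathfrak{e}_r$ and by the $r^{-3}$ smallness of the ball --- uniformly for $\|\underline{\hat{\mathsf{C}}}_0\|_{2,2}\le r^{-3}$. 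Banach's fixed-point theorem then yields a unique solution $\underline{\hat{\mathsf{C}}}^{\bot}=\Phi(\underline{\hat{\mathsf{C}}}_0)$, depending smoothly on $\underline{\hat{\mathsf{C}}}_0$. Substituting back and applying $\Pi$ produces the obstruction map $\kappa_r:B_0(r^{-3})\subset\H_r^{+}\to\H_r^{-}$, so that $\hat{\mathsf{C}}_r+\underline{\hat{\mathsf{C}}}_0\oplus\Phi(\underline{\hat{\mathsf{C}}}_0)$ is a genuine monopole in the slice if and only if $\kappa_r(\underline{\hat{\mathsf{C}}}_0)=0$; conversely, any monopole $\hat{\mathsf{C}}$ in the slice with $\|\hat{\mathsf{C}}-\hat{\mathsf{C}}_r\|_{2,2}\le r^{-3}$ decomposes as $\hat{\mathsf{C}}_r+\underline{\hat{\mathsf{C}}}_0\oplus\underline{\hat{\mathsf{C}}}^{\bot}$ with $\underline{\hat{\mathsf{C}}}_0\in\H_r^{+}$, its $\underline{\hat{\mathsf{C}}}^{\bot}$ solves the same fixed-point equation, and uniqueness forces $\underline{\hat{\mathsf{C}}}^{\bot}=\Phi(\underline{\hat{\mathsf{C}}}_0)$ and $\kappa_r(\underline{\hat{\mathsf{C}}}_0)=0$. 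This is the asserted bijection.

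The hard part will be estimate (iii): the uniform-in-$r$ invertibility of $\hat{\mathcal{T}}_r$ on $\mathcal{Y}_r$ together with the control of $\H_r$ itself --- i.e.\ ruling out that as the neck is stretched some eigenvalue of $\hat{L}_r$, other than those forced by the genuine kernels and cokernels on $\hat{N}_1$ and $\hat{N}_2$, drifts into the window $(-r^{-2},r^{-2})$. This is precisely what the asymptotic exactness of (\ref{equ:3T}) and (\ref{equ:3O}) encodes, and it relies on $\mathsf{C}_\infty$ being nondegenerate on the positive-scalar-curvature $3$-manifold $N=\S^1\times\S^2$, so that the neck operator is bounded away from $0$ on the orthogonal complement of its (finite-dimensional) kernel, that kernel --- the constants spanning $T_1G_\infty$ together with $T_{\mathsf{C}_\infty}\M_\sigma$ --- being absorbed by passing to the extended weighted spaces $L_{ex}$. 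Once the abstract linear package of Section~4.1 of \cite{Nicolaescu2000NotesOS} (equivalently, the two diagrams above) is in place, the remainder is the bookkeeping just sketched; the statement itself is Theorem~4.5.7 of \cite{Nicolaescu2000NotesOS}, and in our setting the only point requiring separate verification is that the regularity hypothesis underlying those diagrams holds, which follows from the positive-scalar-curvature metrics constructed in Section~\ref{subsection:PSCmetric}.
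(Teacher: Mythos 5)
The paper does not prove this statement; it imports it by citation from Nicolaescu (Theorem 4.5.7), so there is no in-paper proof to compare against. Your Lyapunov--Schmidt reduction and contraction-mapping argument on the ball of radius $r^{-3}$, built on the three estimates you list (exponential smallness of the splicing error $\mathfrak{e}_r$ coming from the exponential decay of each $\hat{\mathsf{C}}_i$ to $\mathsf{C}_\infty$, quadratic control of the nonlinearity via Sobolev multiplication, and the $O(r^2)$ bound on the inverse of $\hat{\mathcal{T}}_r$ over the high-eigenvalue complement $\mathcal{Y}_r$) is precisely the argument given in that reference, and your identification of the Morse--Bott nondegeneracy of the reducible monopoles on $N=\S^1\times\S^2$ with the PSC metric of Section~\ref{subsection:PSCmetric} as the source of the decay rate and of the spectral gap is the correct point of contact with the present paper.
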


We can also prove that, in the slice of $\hat{\mathsf{C}}_r$, any pair of configurations in small enough neighborhood of $\hat{\mathsf{C}}_r$, are gauge inequivalent (see Lemma 4.5.9 of Nicolaescu's book \cite{Nicolaescu2000NotesOS}). Thus we have
\begin{theorem}[\cite{Nicolaescu2000NotesOS} Corollary 4.5.10]\label{thm:lgc}
For large enough $r$,
\[
\{
\hat{\mathsf{C}}_r + \underline{\hat{\mathsf{C}}}_0\oplus \underline{\hat{\mathsf{C}}}^\bot; \|\underline{\hat{\mathsf{C}}}_0\|_{2,2}\leq r^{-3}, \kappa_r (\underline{\hat{\mathsf{C}}}_0) = 0, \underline{\hat{\mathsf{C}}}^\bot = \Phi(\underline{\hat{\mathsf{C}}}_0), \mathfrak{L}^{*}_{\hat{\mathsf{C}}_r} (\underline{\hat{\mathsf{C}}}_0\oplus \underline{\hat{\mathsf{C}}}^\bot) = 0
\}
\]
is an open set of the moduli space $\M(\hat{N}_r, \hat{\sigma}_1\#\hat{\sigma}_2)$.
\end{theorem}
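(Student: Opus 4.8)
The plan is to regard this statement as the ``local surjectivity'' step of the gluing package: it says that the family of glued monopoles produced by Theorem~\ref{thm:lgt} sweeps out an \emph{open} piece of the genuine moduli space. I would derive it from Theorem~\ref{thm:lgt}, the slice theorem for the gauge action on the closed $4$-manifold $\hat N_r=\hat N(r)$, and the gauge--inequivalence statement of Lemma 4.5.9 of \cite{Nicolaescu2000NotesOS} quoted above. Write $\hat\sigma=\hat\sigma_1\#\hat\sigma_2$, let $\mathcal{C}$ be the $L^{2,2}$-configuration space over $\hat N_r$, $\mathcal{G}$ the $L^{3,2}$-gauge group, and $q\colon\mathcal{C}\to\mathcal{B}_r:=\mathcal{C}/\mathcal{G}$ the projection, so that $\M(\hat N_r,\hat\sigma)\subset\mathcal{B}_r$ carries the subspace topology. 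For $r$ large set
\[
\mathcal{U}_r:=\{\hat{\mathsf{C}}_r+v\ ;\ \mathfrak{L}^{*}_{\hat{\mathsf{C}}_r}(v)=0,\ \|v\|_{2,2}\le r^{-3}\},
\]
the Coulomb slice of radius $r^{-3}$ through $\hat{\mathsf{C}}_r=\hat{\mathsf{C}}_1\#_r\hat{\mathsf{C}}_2$. By Theorem~\ref{thm:lgt} the set of monopoles lying in $\mathcal{U}_r$ is exactly the set, call it $\mathcal{M}_r$, of configurations $\hat{\mathsf{C}}_r+\underline{\hat{\mathsf{C}}}_0\oplus\underline{\hat{\mathsf{C}}}^{\bot}$ appearing in the statement; so it is enough to show that $q(\mathcal{M}_r)$ is open in $\M(\hat N_r,\hat\sigma)$.

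I would then proceed in two moves. First, Lemma 4.5.9 says that, for $r$ large, two gauge-equivalent configurations inside a small slice at $\hat{\mathsf{C}}_r$ must coincide, so $q|_{\mathcal{U}_r}$, and in particular $q|_{\mathcal{M}_r}$, is injective. Second, I invoke the usual slice theorem: the elliptic decomposition $T_{\hat{\mathsf{C}}_r}\mathcal{C}=\im\mathfrak{L}_{\hat{\mathsf{C}}_r}\oplus\ker\mathfrak{L}^{*}_{\hat{\mathsf{C}}_r}$ together with the implicit function theorem applied to the action map $(u,\hat{\mathsf{C}})\mapsto u\cdot\hat{\mathsf{C}}$ shows that, once $r$ is large, $\mathcal{U}_r$ (with trivial stabilizer when $\hat{\mathsf{C}}_r$ is irreducible, and after quotienting by its constant $S^1$-stabilizer when $\hat{\mathsf{C}}_r$ is reducible) maps homeomorphically onto an open neighborhood $W_r$ of $[\hat{\mathsf{C}}_r]$ in $\mathcal{B}_r$. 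Since the Seiberg--Witten equations are gauge invariant and every gauge orbit meeting $W_r$ has a slice representative in $\mathcal{U}_r$ unique up to the stabilizer, a class in $W_r$ is a monopole class precisely when that slice representative is a monopole; hence $q(\mathcal{M}_r)=W_r\cap\M(\hat N_r,\hat\sigma)$, which is open in $\M(\hat N_r,\hat\sigma)$ because $W_r$ is open in $\mathcal{B}_r$. One bookkeeping point: $W_r$ corresponds to the open ball $\|v\|_{2,2}<r^{-3}$ while $\mathcal{M}_r$ is defined with the closed ball; the two sets coincide because the glued configuration $\hat{\mathsf{C}}_r$ solves the Seiberg--Witten equations up to an error far smaller than $r^{-3}$, so that all genuine monopoles in the slice sit well inside $\|v\|_{2,2}<r^{-3}$.

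The genuinely nontrivial ingredient is the uniformity in $r$: one needs the single radius $r^{-3}$ to serve simultaneously as the radius of a bona fide Coulomb slice at every $\hat{\mathsf{C}}_r$, and to lie in the range where Lemma 4.5.9 holds, as $r\to\infty$. This is not the slice theorem at a single fixed configuration; it rests on the quantitative estimates of Sections~4.1--4.5 of \cite{Nicolaescu2000NotesOS} --- the linear gluing estimates and the asymptotically exact diagrams \ref{equ:3T} and \ref{equ:3O} --- which bound the norms of $\mathfrak{L}_{\hat{\mathsf{C}}_r}$, $\mathfrak{L}^{*}_{\hat{\mathsf{C}}_r}$ and the relevant parametrices uniformly along the gluing family. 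The reducible case adds only the routine step of carrying the residual constant $S^1$-action through the argument; it acts freely on the irreducible locus of $\mathcal{U}_r$, so the quotient remains a local chart for $\mathcal{B}_r$ near $[\hat{\mathsf{C}}_r]$ and the conclusion is unchanged.
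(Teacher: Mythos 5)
Your proposal is correct and follows the same route the paper (and Nicolaescu's Corollary 4.5.10) intends: identify the given set as the genuine monopoles in the Coulomb slice through $\hat{\mathsf{C}}_r$ via Theorem~\ref{thm:lgt}, obtain injectivity of the projection to $\hat{\mathcal{B}}_{\hat N_r}$ from Lemma~4.5.9, and use the slice theorem to conclude that the image is the intersection of $\M(\hat N_r,\hat\sigma_1\#\hat\sigma_2)$ with an open set. The paper itself supplies no proof beyond the citation and a one-sentence pointer to Lemma~4.5.9, so your reconstruction (including the remarks on the open-versus-closed ball bookkeeping and the uniformity of the slice radius $r^{-3}$ along the gluing family) is the expected argument.
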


Moreover, this collection of open sets is an open cover of moduli space $\M(\hat{N}_r, \hat{\sigma}_1\#\hat{\sigma}_2)$:
\begin{theorem}[\cite{Nicolaescu2000NotesOS} Theorem 4.5.15]\label{thm:ls}
Let 
\[
\hat{\mathcal{Z}}_{\Delta} := \{ (\hat{\mathsf{C}}_1,\hat{\mathsf{C}}_2)\in \hat{\mathcal{Z}}_1\times\hat{\mathcal{Z}}_2; \partial_\infty \hat{\mathsf{C}}_1 =  \partial_\infty \hat{\mathsf{C}}_2\}
\]
be the space of compatible monopoles. Then
\[
\bigcup_{\mathsf{C}_r = \hat{\mathsf{C}}_1 \#_r \hat{\mathsf{C}}_2,(\hat{\mathsf{C}}_1,\hat{\mathsf{C}}_2)\in\hat{\mathcal{Z}}_{\Delta}} \{
\hat{\mathsf{C}}_r + \underline{\hat{\mathsf{C}}}_0\oplus \underline{\hat{\mathsf{C}}}^\bot; \|\underline{\hat{\mathsf{C}}}_0\|_{2,2}\leq r^{-3}, \kappa_r (\underline{\hat{\mathsf{C}}}_0) = 0, \underline{\hat{\mathsf{C}}}^\bot = \Phi(\underline{\hat{\mathsf{C}}}_0), \mathfrak{L}^{*}_{\hat{\mathsf{C}}_r} (\underline{\hat{\mathsf{C}}}_0\oplus \underline{\hat{\mathsf{C}}}^\bot) = 0
\}
\]
is $ \M(\hat{N}_r, \hat{\sigma}_1\#\hat{\sigma}_2)$.
\end{theorem}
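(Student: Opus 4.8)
The inclusion of the displayed union into $\M(\hat{N}_r,\hat{\sigma}_1\#\hat{\sigma}_2)$ is already Corollary~\ref{thm:lgc} (each set in the union is an open subset of the moduli space), so the content of the statement is the reverse inclusion: for every large $r$, each monopole on $\hat{N}_r$ lies, up to gauge, in one of the gluing neighbourhoods produced by Theorem~\ref{thm:lgt}. The plan is the standard neck-stretching compactness argument, run by contradiction. Suppose the reverse inclusion fails; then there are $r_n\to\infty$ and monopoles $\hat{\mathsf{C}}^{(n)}$ on $\hat{N}_{r_n}$ — genuine Seiberg--Witten solutions representing points of $\M(\hat{N}_{r_n},\hat{\sigma}_1\#\hat{\sigma}_2)$ — none of which lies in any set of the stated form. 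The first step is to record the a priori control: the $C^0$-bound on the spinor together with the Weitzenb{\"o}ck/energy estimates recalled in Section~\ref{subsection:PSCmetric} give, after passing to Coulomb gauge on each compact piece, uniform $C^\infty_{loc}$-bounds for $\hat{\mathsf{C}}^{(n)}$ on $\hat{N}_1$, on $\hat{N}_2$, and on the ever-lengthening interior of the neck $N\times[-r_n,r_n]$.

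The second step is convergence. After passing to a subsequence and applying gauge transformations, $\hat{\mathsf{C}}^{(n)}$ converges in $C^\infty_{loc}$ to a finite-energy monopole $\hat{\mathsf{C}}_1^\infty$ on $\hat{N}_1$, to a finite-energy monopole $\hat{\mathsf{C}}_2^\infty$ on $\hat{N}_2$, and on the interior of the neck to a translation-invariant monopole, i.e.\ to a monopole ${\mathsf{C}}_\infty$ on $N$. Since the total energy is a bounded topological quantity, conserved in the limit, and since no energy can concentrate — there is no bubbling in this abelian range, and the positive-scalar-curvature geometry of the necks $\S^1\times D^3$, $D^2\times\S^2$ (Section~\ref{subsection:PSCmetric}) forces ${\mathsf{C}}_\infty$ to be the isolated reducible monopole on $N=\S^1\times\S^2$ — the asymptotic values match: $\partial_\infty\hat{\mathsf{C}}_1^\infty={\mathsf{C}}_\infty=\partial_\infty\hat{\mathsf{C}}_2^\infty$. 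Hence $(\hat{\mathsf{C}}_1^\infty,\hat{\mathsf{C}}_2^\infty)\in\hat{\mathcal{Z}}_\Delta$, and we may form $\hat{\mathsf{C}}_{r_n}:=\hat{\mathsf{C}}_1^\infty\#_{r_n}\hat{\mathsf{C}}_2^\infty$.

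The third and central step is the quantitative estimate $\|\hat{\mathsf{C}}^{(n)}-\hat{\mathsf{C}}_{r_n}\|_{2,2}\le r_n^{-3}$ for $n$ large, after one further gauge transformation. Off the neck this is the $C^\infty_{loc}$-convergence above. On the neck one invokes the exponential decay of finite-energy (asymptotically) cylindrical monopoles toward their asymptotic value: both $\hat{\mathsf{C}}^{(n)}$ and $\hat{\mathsf{C}}_{r_n}$ differ from ${\mathsf{C}}_\infty$ by $O(e^{-\delta s})$ at depth $s$ into the neck, where $\delta>0$ is the spectral gap of the Hessian $\mathcal{T}_{{\mathsf{C}}_\infty}$, strictly positive because ${\mathsf{C}}_\infty$ is a nondegenerate monopole on $N$ (with $H^2(N)$ torsion, there are no small eigenvalues and no nonconstant flow lines from ${\mathsf{C}}_\infty$ to itself). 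As $e^{-\delta r_n}$ beats the polynomial scale $r_n^{-3}$, the two configurations are $r_n^{-3}$-close in $L^{2,2}$. Projecting $\hat{\mathsf{C}}^{(n)}$ into the slice through $\hat{\mathsf{C}}_{r_n}$ and decomposing it with respect to $\H_{r_n}^+\oplus\mathcal{Y}_{r_n}^+$, Theorem~\ref{thm:lgt} then identifies it with a configuration $\hat{\mathsf{C}}_{r_n}+\underline{\hat{\mathsf{C}}}_0\oplus\underline{\hat{\mathsf{C}}}^\bot$ satisfying $\|\underline{\hat{\mathsf{C}}}_0\|_{2,2}\le r_n^{-3}$, $\kappa_{r_n}(\underline{\hat{\mathsf{C}}}_0)=0$, $\underline{\hat{\mathsf{C}}}^\bot=\Phi(\underline{\hat{\mathsf{C}}}_0)$ — precisely a point of the set indexed by $(\hat{\mathsf{C}}_1^\infty,\hat{\mathsf{C}}_2^\infty)$, contradicting the choice of $\hat{\mathsf{C}}^{(n)}$.

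The step I expect to be the main obstacle is the convergence together with the ``no escape of energy'' claim: ruling out bubbling at points and, more seriously, the formation of a broken configuration through an intermediate monopole on $N$ carrying nonzero relative energy (equivalently, a nontrivial Seiberg--Witten trajectory along the neck). Here the special geometry — $N=\S^1\times\S^2$ with positive scalar curvature and a single isolated reducible monopole, and likewise positive scalar curvature on $\hat{N}_1$, $\hat{N}_2$ — is exactly what excludes this; the remaining technical point is to pin the decay rate $\delta$ down uniformly in $n$ so that $e^{-\delta r_n}<r_n^{-3}$ for all large $n$.
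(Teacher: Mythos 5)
The paper quotes this as Theorem~4.5.15 of \cite{Nicolaescu2000NotesOS} and does not reprove it, so there is no in-paper argument to compare yours against. Your neck-stretching compactness skeleton --- take a sequence of genuine monopoles $\hat{\mathsf{C}}^{(n)}$ on $\hat{N}_{r_n}$, obtain a priori bounds, pass to $C^\infty_{loc}$ limits $(\hat{\mathsf{C}}_1^\infty,\hat{\mathsf{C}}_2^\infty)$ off the neck and a translation-invariant limit on it, match the asymptotics, establish $r_n^{-3}$-closeness to the glued approximation $\hat{\mathsf{C}}_1^\infty\#_{r_n}\hat{\mathsf{C}}_2^\infty$, then invoke Theorem~\ref{thm:lgt} --- is the correct outline and is in spirit what Nicolaescu does.

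The place where your sketch goes wrong is in how you justify the decay on the neck. You assert that $H^2(N)$ is torsion and that ${\mathsf{C}}_\infty$ is ``the isolated'' nondegenerate monopole on $N=\S^1\times\S^2$. Neither is true: $H^2(\S^1\times\S^2;\Z)\cong\Z$ is free, and $\M_\sigma$ on $\S^1\times\S^2$ is a circle of reducibles, not a point --- this is exactly what Proposition~\ref{prop:dimOfReducible} and the identification $\M_s\cong\S^1$ in the proof of Proposition~\ref{prop:noncompactTransversality-general} say. Consequently the Hessian $\mathcal{T}_{{\mathsf{C}}_\infty}$ has a nontrivial kernel (the tangent $T_{{\mathsf{C}}_\infty}\M_\sigma$ plus the stabilizer direction $T_1G_\infty$), and the ``strictly positive spectral gap'' you cite must instead be a gap between this nontrivial kernel and the rest of the spectrum; the exponential decay estimate has to be run modulo these flat directions, which is precisely the Morse--Bott refinement of Nicolaescu's \S4.2 asymptotics. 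You also need to argue that the asymptotic values from the two sides of the neck actually coincide as points of the circle $\M_\sigma$, not merely up to the $\mathcal{G}^{\partial_\infty}$-action, so that the limit lands in $\hat{\mathcal{Z}}_\Delta$ and not just in the larger set $\hat{\mathcal{Z}}$ of Lemma~\ref{lem:fibProd}. These are the delicate points that the cited proof takes care of and that your sketch glosses over.
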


\subsection{Computation of virtual tangent space and obstruction space}
Now we have stated all results we need. Next we compute the dimension of the moduli space $\dim H^1_{\hat{\mathsf{C}}_0}$ and the dimension of the obstruction space $\dim H^2(F_{\hat{\mathsf{C}}_0})$ for any monopole $\hat{\mathsf{C}}_0$ on $X_0$, $D^3\times \S^1$, and $\S^2\times D^2$.

\begin{prop}\label{prop:dimOfReducible}
Let matrics $g_{bullet}$ be the ones chosen in subsection \ref{subsection:PSCmetric}. Let $\ss(\S^1 \times D^3)$ be the unique $\text{spin}^c$ structure of $\S^1 \times D^3$, and $\ss(D^2 \times \S^2)$ be the unique $\text{spin}^c$ structure of $ D^2 \times \S^2 $ such that the first Chern class of the determinant line bundle is zero. 
Then the moduli space of SW equations without perturbation $\M(\S^1 \times D^3,g_{bullet} ,\ss(\S^1 \times D^3))$ is a circle and $\M(D^2 \times \S^2,g_{bullet} ,\ss(D^2 \times \S^2))$ is a point. 
\end{prop}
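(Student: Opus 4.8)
The plan is to use the positive scalar curvature of $g_{bullet}$ to kill all irreducible solutions, and then identify the resulting reducible moduli space with a first-cohomology torus of each of the two pieces.

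First I would observe that the bullet metrics constructed in Section~\ref{subsection:PSCmetric} are product-like near $\partial(\S^1\times D^3)=\partial(D^2\times\S^2)=\S^1\times\S^2$, so each extends to a metric of everywhere-positive scalar curvature on the cylindrical completion $\hat N$ obtained by attaching $\S^1\times\S^2\times[0,\infty)$, with a PSC product metric on the neck. Let $(\psi,A)$ be a finite-energy monopole on $\hat N$ for the given $\text{spin}^c$ structure. The finite-energy condition forces exponential decay to a monopole on $\S^1\times\S^2$ which, again by positive scalar curvature, is itself reducible, so the boundary term at infinity in the Weitzenb{\"o}ck integration-by-parts identity quoted in Section~\ref{subsection:PSCmetric} (see also \cite{kronheimer_mrowka_2007}~(4.22)) vanishes; then $0=\int_{\hat N}\bigl(|\nabla^A\psi|^2+\tfrac{s}{4}|\psi|^2+\tfrac{1}{2}\langle\mathbf c(F_A^+)\psi,\psi\rangle\bigr)$, and since the curvature equation makes the last summand a non-negative quartic in $\psi$ while $s>0$, every term is non-negative, forcing $\psi\equiv 0$. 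Hence every monopole on $\hat N$ (and on the neck) is reducible and the equations collapse to $F_A^+=0$ on $\hat N$ with flat asymptotic connection on $\S^1\times\S^2$.

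Next I would trivialize the determinant line bundle. Since $H^2(\S^1\times D^3;\Z)=0$ and we have picked the $\text{spin}^c$ structure on $D^2\times\S^2$ whose determinant line bundle has $c_1=0$ (Proposition~\ref{prop:spincRestriction}, Theorem~\ref{thm:changeOfSpinc}), $\L$ is trivial, so I may take the trivial product connection $A_0$ as reference and write $A=A_0+a$ for an imaginary $1$-form $a$ of finite energy with flat asymptotic value. Then $F_A^+=(da)^+$, so the reducible locus is $\{a:(da)^+=0\}$ modulo the gauge group. Fixing the gauge by the Hodge/APS decomposition on the cylindrical manifold (the Coulomb condition $d^*a=0$ with the correct behaviour along the neck) turns $(da)^+=0,\ d^*a=0$ into the assertion that $a$ is a bounded, neck-translation-invariant harmonic $1$-form, and conversely every such $a$ solves $F_A^+=0$; hence the reducible moduli space is $\mathcal H^1_{ex}(\hat N;\R)/\Lambda$, where $\mathcal H^1_{ex}$ is the space of extended-$L^2$ harmonic $1$-forms — equal to $H^1(\hat N;\R)$ for these cylindrical products — and $\Lambda=\pi_0(\widehat{\mathcal G})=[\hat N,\S^1]=H^1(\hat N;\Z)$ is the lattice of large gauge transformations.

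Finally I would evaluate the two cases. For $\hat N\simeq\S^1\times D^3\simeq\S^1$ one has $H^1(\hat N;\R)=\R$ and $H^1(\hat N;\Z)=\Z$, so $\M(\S^1\times D^3,g_{bullet},\ss(\S^1\times D^3))=\R/\Z\cong\S^1$ (and the restriction map $\partial_\infty$ to $\M(\S^1\times\S^2)\cong\S^1$ is the identity); for $\hat N=D^2\times\S^2$ one has $H^1(\hat N;\R)=0$, so the only reducible is $A_0$ and the moduli space is a single point, mapping to one flat connection of $\M(\S^1\times\S^2)$. The step I expect to be the main obstacle is the middle one: verifying that the decay-rate ($L^2_\mu$, $L^2_{ex}$) conditions together with the global Coulomb gauge-fixing really do cut the reducible locus down to exactly this finite-dimensional torus — that no extra $L^2$ harmonic $1$-forms intervene, that the gauge-fixing can be carried out globally, and that the outcome is a smooth manifold rather than a singular stratum. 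Here I would lean on Nicolaescu's Chapter~4 analysis of the asymptotic-value map $\partial_\infty$ \cite{Nicolaescu2000NotesOS}, using that the explicit PSC product geometry makes every harmonic form in sight pulled back from the $\S^1$ or the $\S^2$ factor.
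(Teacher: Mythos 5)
Your proposal is correct and follows essentially the same route as the paper: use the Weitzenb\"ock identity with the positive scalar curvature of the bullet metric to force all monopoles to be reducible, reduce the equations to $F_A^+=0$ for a trivialized determinant line bundle, and then identify the reducible moduli space with the torus $H^1(M;\R)/H^1(M;\Z)$, which is a circle for $\S^1\times D^3$ and a point for $D^2\times\S^2$. The only real difference is one of emphasis: the paper states the argument compactly, invoking $\im d\cap\im{*d}=0$ to pass from $F_A^+=0$ to $dA=0$, while you spell out the cylindrical-end analysis (extended-$L^2$ harmonic forms, APS gauge-fixing, vanishing of boundary terms at infinity) that underlies that step; both are sound, and your version is arguably more explicit about why the harmonic/Hodge reasoning persists on a manifold with cylindrical end.
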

\begin{proof}
By the Weitzenb{\"o}ck formula, a non-negative scalar curvature on $3$- or $4$-manifolds leads solely to reducible solutions of the Seiberg-Witten equations (see \cite{kronheimer_mrowka_2007} (4.22)). Hence all monopoles are of the form $(A,0)$, and the Seiberg-Witten equations degenerate to one equation
\[
F_{A}^+ = 0.
\]
Since $F_{A}^+ = \frac{1}{2}(dA + *dA)$ and $\im d\cap \im d^* = \im d\cap \im *d = 0$, $F_{A}^+ = 0$ is equivalent to $dA = 0$.

Fix any $U(1)$-connection $A_0$ of the determinant line bundle of the chosen $\text{spin}^c$ structure. 
In Proposition \ref{prop:spincRestriction} we showed that the first Chern class of the determinant line bundle is zero. Hence $F_{A_0}$ is exact. Let $da_0 = -F_{A_0}$. Then $(A,0)$ is a monopole iff 
\[
A =  A_0 + a_0 + a
\]
for some closed imaginary $1$-form $a$. Hence the space of monopoles is the coset of the space of closed forms.

Now consider the action by the gauge group $\mathscr{G}=Map(M,\S^1)$. Elements in the identity component $I$ of $\mathscr{G}$ can be written as $\e^{\mathbf{i}f}$ where $f$ can be any smooth function ($0$-form), and it changes $A$ by the addition of $\mathbf{i}df$. Also $\mathscr{G}/I = H^1(M;\Z)$. Hence for $M =D^3\times \S^1$ or $\S^2\times D^2$, the moduli space of monopoles can be identified with the torus $H^1(M;\R)/H^1(M;\Z)$.
\end{proof}

By Proposition \ref{prop:spincRestriction} and Proposition \ref{prop:dimOfReducible}, we have 
\begin{corollary}\label{cor:dimOfReducible}
Let $\ss$ be any $\text{spin}^c$ structure of $X$ and $\ss'$ be its unique extension to $X'$ as in Theorem \ref{thm:changeOfSpinc}.  Let matrics $g_{bullet}$ be the ones chosen in subsection \ref{subsection:PSCmetric}. Then the moduli space of SW equations without perturbation $\M(D^3\times \S^1,g_{bullet} ,\ss|_{D^3\times \S^1})$ is a circle and $\M(\S^2\times D^2,g_{bullet} ,\ss'|_{\S^2\times D^2})$ is a point. All monopoles are reducible.
\end{corollary}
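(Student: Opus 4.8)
\emph{Proof proposal.} The plan is to deduce everything from Proposition \ref{prop:dimOfReducible} by identifying the restricted $\text{spin}^c$ structures with the two distinguished ones appearing there. First I would handle the piece $S^1 \times D^3$: since $H^2(S^1 \times D^3;\Z) = 0$, Remark \ref{remark:spincline} shows that $S^1 \times D^3$ carries a unique $\text{spin}^c$ structure up to isomorphism, so $\ss|_{S^1 \times D^3}$ must coincide with the structure $\ss(S^1\times D^3)$ of Proposition \ref{prop:dimOfReducible}. That proposition then gives directly that $\M(S^1 \times D^3, g_{bullet}, \ss|_{S^1 \times D^3})$ is a circle.

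Next I would treat the piece $D^2 \times S^2$. Here there is more than one $\text{spin}^c$ structure, so I would invoke Proposition \ref{prop:spincRestriction}, which says that $\ss'|_{D^2\times S^2}$ is the unique $\text{spin}^c$ structure on $D^2\times S^2$ whose determinant line bundle has vanishing first Chern class; that is, $\ss'|_{D^2\times S^2} = \ss(D^2\times S^2)$ in the notation of Proposition \ref{prop:dimOfReducible}. Applying that proposition again gives that $\M(D^2\times S^2, g_{bullet}, \ss'|_{D^2\times S^2})$ is a single point. For the final sentence, I would recall from Subsection \ref{subsection:PSCmetric} that $g_{bullet}$ has everywhere positive scalar curvature on both $S^1\times D^3$ and $D^2\times S^2$, so the Weitzenb{\"o}ck formula forces the spinor component of any unperturbed monopole to vanish, exactly as in the opening lines of the proof of Proposition \ref{prop:dimOfReducible}; hence all monopoles are reducible.

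The only point that needs any care — and it is bookkeeping rather than a genuine difficulty — is keeping the $\text{spin}^c$ structures straight across the four manifolds $X$, $X'$, $S^1\times D^3$, and $D^2\times S^2$: one uses Theorem \ref{thm:changeOfSpinc} to know that $\ss'$ exists and is the unique structure on $X'$ with $\ss'|_{X_0} = \ss|_{X_0}$, and Proposition \ref{prop:spincRestriction} to pin down its restriction to $D^2\times S^2$ via the determinant line bundle. Once these identifications are recorded, the corollary is an immediate consequence of Proposition \ref{prop:dimOfReducible}, so I do not expect any substantive obstacle.
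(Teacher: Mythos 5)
Your proof is correct and matches the paper's (terse) reasoning: the paper derives the corollary directly from Propositions \ref{prop:spincRestriction} and \ref{prop:dimOfReducible}, and your write-up simply spells out the identification of $\ss|_{S^1\times D^3}$ (via $H^2(S^1\times D^3;\Z)=0$) and of $\ss'|_{D^2\times S^2}$ (via Proposition \ref{prop:spincRestriction}) with the distinguished structures appearing in Proposition \ref{prop:dimOfReducible}. The reducibility claim via positive scalar curvature and the Weitzenb\"ock formula is likewise exactly what the paper uses.
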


\begin{prop}\label{prop:virtualDimX0}
Let $g(X)$ be a metric of $X$ such that $g|_{\partial X_0}$ is the product of canonical metrics on $\S^1$ and $\S^2$. Let $\ss$ be any $\text{spin}^c$ structure of $X$ satisfying the dimension assumption (\ref{equ:dimAssumption}). Let $\hat{\ss}$ be the restriciton of $\ss$ on $X_0$. Then the virtual dimension 
\[
d(\hat{\mathsf{C}}_0)= 1
\]
for any monopole $\hat{\mathsf{C}}_0$ on $X_0$.
\end{prop}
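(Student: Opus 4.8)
The plan is to recognize $d(\hat{\mathsf{C}}_0)$ as a Fredholm index and to pin it down by an additivity (excision) argument, comparing the cylindrical manifold $\hat{X}_0 = X_0 \cup_{\S^1\times\S^2}\bigl((\S^1\times\S^2)\times[0,\infty)\bigr)$ with the closed manifold $X = X_0\cup_{\S^1\times\S^2}N$ and with the cap $N = \S^1\times D^3$. Since $d(\hat{\mathsf{C}}_0) = -\chi(\widehat{\mathcal{K}}_{\hat{\mathsf{C}}_0}) = \ind\hat{\mathcal{T}}_{\hat{\mathsf{C}}_0}$ is the index of an operator depending continuously on $\hat{\mathsf{C}}_0$, it is locally constant on the space of configurations over $X_0$, so it suffices to determine it once; we will see it equals $1$. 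Along the way the indices involved will be evaluated at reducible configurations, where, because $q(\psi)$ and the Clifford term vanish to second order at $\psi = 0$, the deformation operator decouples into a twisted Dirac part $\D_A$ and an anti-self-dual de Rham part $d^*\oplus d^+$. I will write $d(X)$ and $d(N)$ for the virtual dimensions attached to $X$ with $\ss$ and to $\S^1\times D^3$ with $\ss|_{\S^1\times D^3}$.

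The first step is the gluing exact sequence. Applying the short exact sequence (\ref{equ:shortExact}) separately over $\hat{X}_0$ and over $\hat{N}$ (the cap with its cylindrical end), and using that a genuine monopole on $X$ is exactly a pair of asymptotically matching monopoles on the two pieces once the neck is long enough — this is the content of the linear gluing theory of Section 4.1 of \cite{Nicolaescu2000NotesOS}, encoded in the diagrams (\ref{equ:3T}) and (\ref{equ:3O}) — one obtains a short exact sequence of elliptic complexes
\[
0 \to \widehat{\mathcal{K}}_X \to \widehat{\mathcal{K}}_{\hat{X}_0}\oplus\widehat{\mathcal{K}}_{\hat{N}} \to B_{\hat{\mathsf{C}}_0} \to 0,
\]
where $\widehat{\mathcal{K}}_X$ is the deformation complex of $X$ with a long neck and the last map is the difference of the two asymptotic-value maps $\partial_\infty$. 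Since the Euler characteristic of a complex is its index, an integer-valued locally constant quantity, this identity is exact for all sufficiently long necks, and taking Euler characteristics gives $d(\hat{\mathsf{C}}_0) = d(X) - d(N) - \chi(B_{\hat{\mathsf{C}}_0})$.

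It remains to evaluate the three terms on the right. First, $d(X)$ is precisely the left-hand side of the dimension assumption (\ref{equ:dimAssumption}), hence $d(X) = 1$. Second, $\chi(B_{\hat{\mathsf{C}}_0}) = 0$: the complex $B_{\hat{\mathsf{C}}_0}$ of (\ref{equ:complexB}) has $H^0 = T_1 G_\infty$ and $H^1 = T_{\mathsf{C}_\infty}\M_\sigma(\S^1\times\S^2)$ and $H^2 = 0$, and running the argument of Proposition \ref{prop:dimOfReducible} on the $3$-manifold $\S^1\times\S^2$ with its product (hence positive-scalar-curvature) metric shows that every monopole there is reducible, that $\M_\sigma(\S^1\times\S^2)\cong H^1(\S^1\times\S^2;\R)/H^1(\S^1\times\S^2;\Z)$ is a circle, and that the stabilizer $G_\infty$ of the reducible $\mathsf{C}_\infty$ is $\S^1$; thus $\dim H^0 = \dim H^1 = 1$ and $\chi(B_{\hat{\mathsf{C}}_0}) = 1 - 1 + 0 = 0$. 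Third, $d(N) = 0$: by Corollary \ref{cor:dimOfReducible} the moduli space $\M(\S^1\times D^3,g_{bullet},\ss|_{\S^1\times D^3})$ is a circle of reducibles, so at such a monopole $\dim H^0 = 1$ (the $\S^1$ stabilizer), $\dim H^2 = 0$ (positivity of the scalar curvature kills both the kernel and the cokernel of $\D_A$, and there are no self-dual harmonic $2$-forms since $b^2(\S^1\times D^3) = 0$), and $\dim H^1 = 1$ (the tangent direction to the circle, on which the constant gauge group acts trivially), whence $d(N) = -1 + 1 - 0 = 0$. Assembling, $d(\hat{\mathsf{C}}_0) = 1 - 0 - 0 = 1$.

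The main obstacle is the gluing exact sequence itself: making rigorous that, for a sufficiently long neck, a matching pair of asymptotic data on $\hat{X}_0$ and $\hat{N}$ is the same as a genuine solution on $X$, at the level of the deformation complexes rather than just the moduli spaces. This is exactly what the linear gluing results reproduced in the previous subsections deliver, once one knows (as verified above) that the boundary complex $B_{\hat{\mathsf{C}}_0}$ and the obstruction spaces over the caps are as described. The homological data of the surgery enters only through Proposition \ref{prop:spincRestriction}, which guarantees that $c_1(\det\ss|_{\S^1\times\S^2}) = 0$, so that the computations of $\M_\sigma(\S^1\times\S^2)$ and of $d(N)$ apply to the $\text{spin}^c$ structures that actually occur on the neck and the cap.
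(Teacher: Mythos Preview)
Your approach is genuinely different from the paper's and, modulo one point of rigor, it works. The paper applies the Atiyah--Patodi--Singer-type formula
\[
d(\hat{\mathsf{C}}_0)=\tfrac14\Big(\textstyle\int_{X_0}c_1^2-2(\chi_{X_0}+3\sigma_{X_0})\Big)+\beta(\mathsf{C}_\infty)
\]
directly, and then checks $\beta(\mathsf{C}_\infty)=0$ by quoting vanishing of $\eta_{\mathrm{sign}}$ and $\eta_{\mathrm{Dir}}$ for the round $\S^1\times\S^2$, and checks $c_1^2(X_0)=c_1^2(X)$, $\chi(X_0)=\chi(X)$, $\sigma(X_0)=\sigma(X)$ by hand (the last via a Mayer--Vietoris argument). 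You instead use index additivity for the decomposition $X=X_0\cup_{\S^1\times\S^2}(\S^1\times D^3)$ and compute the two remaining pieces $d(\S^1\times D^3)$ and $\chi(B)$ directly from the moduli descriptions already in the paper. This is cleaner in one respect: the eta invariants never appear (they cancel between the two sides with opposite boundary orientations), and you never need to compare $\chi$, $\sigma$, $c_1^2$ for $X$ and $X_0$ separately. The paper's route, on the other hand, is self-contained once the APS formula is granted and does not rely on the full linear gluing package.

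The weak point is the ``short exact sequence of elliptic complexes'' $0\to\widehat{\mathcal K}_X\to\widehat{\mathcal K}_{\hat X_0}\oplus\widehat{\mathcal K}_{\hat N}\to B\to 0$. As written this is not literally a short exact sequence: in degree two the complex $B$ vanishes, so you would be asserting $\mathcal Y_X\cong\mathcal Y_{\hat X_0,\mu}\oplus\mathcal Y_{\hat N,\mu}$, which is false (there is no matching condition on the target side). What is true is the \emph{index identity} $d(X)=d(\hat{\mathsf C}_0)+d(\S^1\times D^3)+\chi(B)$, and this is what you actually use. It can be extracted from the diagrams (\ref{equ:3T}) and (\ref{equ:3O}) by taking alternating sums of dimensions, or, more transparently, by writing down the APS formula on each cylindrical piece and observing that the boundary-correction terms combine to $-\chi(B)$: with $\mathsf C_\infty$ reducible one has $\chi(B)=1-b_1(N)$, while $\beta_{X_0}+\beta_{N}=(b_1(N)-1)$ since the $\mathbf F$-terms cancel. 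I would recommend stating the additivity as an index identity and pointing to one of these justifications rather than invoking a short exact sequence of complexes. Your computations $d(\S^1\times D^3)=0$ and $\chi(B)=0$ are correct; for $H^2$ on $\S^1\times D^3$ note that what you are really using is Proposition~\ref{prop:triObs}, which gives $H^2(F)=0$ and hence $H^2(\widehat{\mathcal K})=0$ via (\ref{equ:longExact}).
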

\begin{proof}
Let $\hat{N}$ be a cylindrical manifold with boundary $N = \partial_\infty \hat{N}$. Let $\hat{g}$ be a metric on $\hat{N}$ and $\hat{A}_0$ be a connection on $\hat{N}$. Let $A_0 = \partial_\infty\hat{A}_0$ and $g =\partial_\infty \hat{g}$. Define
\[
\mathbf{F}(g,A_0) := 4\eta_{Dir}({A_0})+ \eta_{sign}(g),
\]
where $\eta_{Dir}({A_0})$ is the eta invariant of the Dirac operator $\mathfrak{D}_{A_0}$, and $\eta_{sign}(g)$ is the eta invariant of the metric $g = \partial_\infty \hat{g}$.

Let ${\mathsf{C}}_\infty = \partial_\infty \hat{\mathsf{C}}_0$. Recall that we always assume that $\hat{\mathsf{C}}_0 \in \hat{\mathcal{C}}_{\mu ,sw}$. Hence ${\mathsf{C}}_\infty$ is a monopole on $\hat{N}$. By Corollary \ref{cor:dimOfReducible}, ${\mathsf{C}}_\infty$ is reducible. 
Then the formula of virtual dimension for the cylindrical manifold $\hat{N}$ is (see page 393 of Nicolaescu's book \cite{Nicolaescu2000NotesOS})
\[
d(\hat{\mathsf{C}}_0)= \frac{1}{4}\left(\int_{\hat{N}} c_1(\hat{A}_0)^2 - 2(\chi_{\hat{N}}+3\sigma_{\hat{N}})\right) + \beta({\mathsf{C}}_\infty),
\]
where
\[
\beta({\mathsf{C}}_\infty) := \frac{1}{2}(b_1(N) - 1) - \frac{1}{4}\mathbf{F}({\mathsf{C}}_\infty).
\]
The integral term is the same as the compact case, and the second term $ \beta({\mathsf{C}}_\infty)$ is called boundary correction term. In our case $N = \partial_\infty\hat{N} =\S^1\times \S^2$, and the metric $\hat{g} = g(X)|_{X_0}$ ensures that $g = \partial_\infty \hat{g}$ is the product of canonical metrics on $\S^1$ and $\S^2$. 
In this situation $\eta_{sign}(g) = 0$ (\cite{Komuro84}) and $\eta_{Dir}(\partial_\infty\hat{C}_0) = 0$ (\cite{Nicolaescu98} Appendix C). Hence $\mathbf{F}(\partial_\infty\hat{C}_0) = 0$. Moreover $b_1(\S^1\times \S^2) = 1$, so $\beta({\mathsf{C}}_\infty) = 0$.

Let $\L$ be the determinant line bundle of $\ss$ and $\hat{\L}$ be the determinant line bundle of $\hat{\ss}$. In the proof of Theorem \ref{thm:changeOfSpinc}, we see that 
\[
c_1(\hat{A}_0)^2 = \langle c_1(\hat{\L})^2, X_0 \rangle = \langle c_1({\L})^2, X \rangle = c_1({\L})^2.
\]
From the triangulation of the boundary sum one can compute that 
\begin{align*}
\chi(X) &=\chi(X_0)+ \chi(\S^1\times D^3) - \chi(\S^1\times \S^2)\\
&=  \chi(X_0)+ (1-1)- (1-1+1-1)\\
&=  \chi(X_0).
\end{align*}
To compute $\sigma(X_0)$ consider the following Mayer-Vietoris sequence
\begin{center}
\begin{tikzpicture}[commutative diagrams/every diagram]
\node (P0) at (0cm, 0cm) {$H^1(X_0)\oplus H^1(\S^1\times D^3)$};
\node (P1) at (-1cm, -0.1cm) {};
\node (P2) at (3.3cm, 0cm) {$H^1(\S^1 \times \S^2)$} ;
\node (P6) at (3.3cm, -1cm) {$\Z$} ;
\node (P3) at (5.5cm, 0cm) {$H^2(X)$};
\node (P4) at (-1cm, -1cm) {$\Z$};
\node (P5) at (8.5cm, 0cm) {$H^2(X_0)\oplus H^2(\S^1\times D^3)$};
\node (P7) at (7.2cm, -0.1cm) {};
\node (P8) at (7.2cm, -1cm) {?};
\node (P9) at (12cm, 0cm) {$H^2(\S^1 \times \S^2)$};
\node (P10) at (12cm, -1cm) {$\Z$};
\path[commutative diagrams/.cd, every arrow, every label]
(P0) edge node {} (P2)
(P2) edge node {$0$} (P3)
(P1) edge node {$\cong$} (P4)
(P3) edge node {$i^*$} (P5)
(P2) edge node {$\cong$} (P6)
(P4) edge node {$\cong$} (P6)
(P7) edge node {$\cong$} (P8)
(P5) edge node {} (P9)
(P8) edge node {$ i_\partial^*$} (P10)
(P9) edge node {$\cong$} (P10);
\end{tikzpicture}
\end{center}
From the assumption of the loop $\gamma$ we choose to do the surgery (the pairing of $\gamma$ and the generator of $H^1(X)=\Z$ is $1$), the dual of $\gamma$ is a $3$-manifold $M\subset X$ and $M\setminus (\S^1\times D^3) \subset X_0$ has the boundary $\{*\}\times \S^2 \subset \S^1\times \S^2 = \partial X_0$. Hence $ i_\partial^* = 0$ and therefore $i^*:H^2(X)\to H^2(X_0)$ is an isomorphism. 
For $2$-manifolds $\Sigma_1,\Sigma_2 \subset X$, we can assume $\gamma \cap \Sigma_i  = \emptyset$ for dimension reason. By choosing a small enough neighborhood of $\gamma$ we can further assume $\Sigma_i \subset X_0$. Hence the pairing of $\Sigma_1$ and $\Sigma_2$ is the same in $X$ and $X_0$. Therefore
\[
\sigma(X_0)= \sigma(X).
\]
Hence $d(\hat{\mathsf{C}}_0)= 1$.
\end{proof}

It turns out that our cases are simple: the obstruction space is trivial.
\begin{prop}\label{prop:noncompactTransversality-general}
Let $\hat{N} = X_0$ and $N= \partial X_0 = \S^1\times \S^2$. Let $\ss$ be any $\text{spin}^c$ structure of $X$. Let $\hat{\ss}$ be the restriciton of $\ss$ on $X_0$. Fix a $\mathsf{C}_\infty \in \M_s$. We can choose a generic perturbation $\eta$ on $X_0$ such that if $\hat{\mathsf{C}}_0$ is an $\eta$-monopole and $\partial_\infty \hat{\mathsf{C}}_0 = \mathsf{C}_\infty$, then $\hat{\mathsf{C}}_0$ is irreducible and $H^2(F(\hat{\mathsf{C}}_0))=0$.
\end{prop}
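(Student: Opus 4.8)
The plan is to run a Sard--Smale transversality argument on the universal moduli space of monopoles on $X_0$ with fixed asymptotic limit $\mathsf{C}_\infty$, handling reducible and irreducible monopoles separately. Throughout, the perturbation $\eta$ is taken to be a self-dual $2$-form that decays along the cylindrical end, so it does not disturb the prescribed limit $\mathsf{C}_\infty$, and the relevant deformation complex is $F_{\hat{\mathsf{C}}_0}$ (the ``fixed limit'' complex of \ref{equ:complexF}), whose second cohomology $H^2(F_{\hat{\mathsf{C}}_0})=\coker\widehat{\underline{SW}}_{\hat{\mathsf{C}}_0}$ is the object we must kill. Since $F_{\hat{\mathsf C}_0}\hookrightarrow \widehat{\mathcal K}_{\hat{\mathsf C}_0}$, strong regularity is what we are really after, which by the long exact sequence \ref{equ:longExact} already implies regularity.

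First I would exclude reducibles. A reducible $\eta$-monopole is a pair $(\hat{A},0)$ with $\partial_\infty\hat{A}=\mathsf{C}_\infty$ solving the affine equation $F^+_{\hat{A}}=\mathbf{i}\eta$; writing $\hat{A}=\hat{A}_{\mathrm{ref}}+a$ for a fixed reference connection with that flat limit turns this into $d^+a=\mathbf{i}\eta-F^+_{\hat{A}_{\mathrm{ref}}}$, which has a solution in the appropriate weighted space iff the right-hand side is $L^2$-orthogonal to the space $\mathcal{H}^+_{L^2}(X_0)$ of $L^2$ self-dual harmonic $2$-forms. By the Mayer--Vietoris computation in the proof of Proposition \ref{prop:virtualDimX0} --- and this is precisely where homological non-triviality of $\gamma$ enters, giving that $i^*\colon H^2(X)\to H^2(X_0)$ is an isometry for the intersection forms, realized by surfaces disjoint from $\gamma$ --- one gets $\dim\mathcal{H}^+_{L^2}(X_0)=b^+(X)>0$ in our setting. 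Hence varying the harmonic self-dual part of $\eta$ over an open dense set of values makes the displayed equation unsolvable, so for generic $\eta$ there is no reducible $\eta$-monopole with limit $\mathsf{C}_\infty$, i.e.\ every such $\hat{\mathsf{C}}_0$ is irreducible.

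For the vanishing $H^2(F_{\hat{\mathsf{C}}_0})=0$ I would use the standard argument on the universal moduli space $\{(\hat{\mathsf{C}}_0,\eta)\colon \hat{\mathsf{C}}_0\text{ irreducible},\ \partial_\infty\hat{\mathsf{C}}_0=\mathsf{C}_\infty,\ \widehat{SW}_\eta(\hat{\mathsf{C}}_0)=0\}$: the point is that the linearization of $\widehat{SW}_\eta$ in the joint (configuration, $\eta$) directions is surjective. Viewing an element of $H^2(F_{\hat{\mathsf{C}}_0})$ inside $\ker_{ex}\hat{\mathcal{T}}^{*}_{\hat{\mathsf{C}}_0}$ as in the obstruction diagram \ref{equ:3O}, it is a pair $(\phi,\mathbf{i}\omega)$ with $\omega$ a self-dual $2$-form. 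Since $\eta$ enters the curvature equation linearly, the $\eta$-derivative hits the $\omega$-slot, so it suffices to show that if $(\phi,\mathbf{i}\omega)$ is $L^2$-orthogonal to all interior self-dual $2$-forms then it is zero; that orthogonality forces $\omega\equiv 0$ on the interior, and then the adjoint system together with unique continuation for the twisted Dirac operator $\D_{\hat{A}}$ --- here $\hat\psi\not\equiv 0$, in fact $\hat\psi\neq 0$ on a dense open set because $\hat{\mathsf{C}}_0$ is irreducible --- forces $\phi\equiv 0$, exactly as in the closed-manifold transversality of \cite{Nicolaescu2000NotesOS}. Thus the universal moduli space is a Banach manifold and the projection to the perturbation space is Fredholm, so by Sard--Smale a generic $\eta$ is a regular value; for such $\eta$, every irreducible $\eta$-monopole $\hat{\mathsf{C}}_0$ with $\partial_\infty\hat{\mathsf{C}}_0=\mathsf{C}_\infty$ has $H^2(F_{\hat{\mathsf{C}}_0})=0$. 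Choosing $\eta$ in the intersection of the two residual sets from the last two paragraphs yields the proposition.

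The main obstacle is the transversality step in the weighted-Sobolev cylindrical setting: one has to verify that perturbations supported in the interior of $X_0$ already suffice to cut down $H^2(F_{\hat{\mathsf{C}}_0})$ rather than the larger $H^2(\widehat{\mathcal{K}}_{\hat{\mathsf{C}}_0})$, that the unique-continuation argument propagates the vanishing of $\phi$ all the way out the end, and that the universal moduli space carries the analytic structure (e.g.\ via $C^k$-perturbation Banach spaces and Taubes' trick) needed for Sard--Smale. All of this runs parallel to the closed-case transversality in \cite{Nicolaescu2000NotesOS} and to the Fredholm theory of its Chapter 4, but the bookkeeping that identifies $\coker\widehat{\underline{SW}}_{\hat{\mathsf{C}}_0}$ with $H^2(F_{\hat{\mathsf{C}}_0})$ and tracks the reducible locus requires care; the input $b^+(X)>0$, needed to rule out reducibles, is part of the standing hypotheses in which the proposition is applied.
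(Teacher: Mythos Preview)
Your proposal is correct and follows essentially the same route as the paper: exclude reducibles via the codimension-$b^+$ wall in the space of perturbations (using $b^+(X_0)=b^+(X)>0$), then run Sard--Smale on the universal moduli space with fixed asymptotic limit $\mathsf{C}_\infty$ to achieve $H^2(F_{\hat{\mathsf{C}}_0})=0$. The paper also inserts an auxiliary Sard--Smale step producing a set $\mathcal{Z}^0_{reg}$ on which $H^2_{\hat{\mathsf{C}}_0}=0$ (regularity rather than strong regularity), but this is bookkeeping for the next proposition rather than an ingredient you are missing here; your unique-continuation justification for the universal transversality is exactly the detail the paper suppresses with ``one can show''.
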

\begin{proof}
To mimic the definition of the wall in the compact case, define
\begin{equation*}\label{equ:noncampactWall}
\mathcal{W}^{k-1}_{\mu} := \{\eta\in L_\mu^{k-1,2}(i\Lambda^+(X_0));  \exists A\in \mathscr{A}(s),  F^{+_g}_A + i\eta = 0\}.
\end{equation*}
By the computation of the \textbf{ASD} operator $d^+ \oplus d^*
$, one can show that $\mathcal{W}^{k-1}_{\mu}$ is an affine space of codimension $b^+$ (see \cite{Nicolaescu2000NotesOS} Page 404) just as in the compact case. For each $\eta$ outside $\mathcal{W}^{k-1}_{\mu}$, all $\eta$-monopoles are irreducible. Consider the configuration space
\[
\hat{\mathcal{C}}^*_{\mu,sw}/\hat{\mathcal{G}}_{\mu,ex}.
\]
Here $\hat{\mathcal{C}}_{\mu,sw}$ is the space of configurations on $X_0$ that restrict to monopoles on $\partial X_0 = \S^1\times \S^2$, as defined in (\ref{equ:defCsw}).
Let $s=\hat{\ss}|_{\partial X_0}$ and 
\[
\M_s = \M(\S^1\times \S^2,s, g_{round}).
\]
Exactly as in the proof of Proposition \ref{prop:dimOfReducible}, one can show that $\M_s = \S^1$. Let
\[
\mathcal{Z} := \mathcal{Z}^{k-1}_{\mu} := L_\mu^{k-1,2}(i\Lambda^+(X_0)) \setminus \mathcal{W}^{k-1}_{\mu}
\]
be the space of nice perturbations. Consider
\begin{align*}
\mathcal{F} : \hat{\mathcal{C}}^*_{\mu,sw}/\hat{\mathcal{G}}_{\mu,ex}\times \M_s\times \mathcal{Z} &\to \hat{\mathcal{Y}}_\mu \times \M_s\times \M_s\\
( \hat{\mathsf{C}},\mathsf{C},\eta) &\mapsto ( \widehat{{SW}}_{\eta}(\hat{\mathsf{C}}),\partial_\infty \hat{\mathsf{C}}, \mathsf{C}).
\end{align*}
Let $\Delta$ be the diagonal of $\M_s\times \M_s$. One can show that $\mathcal{F}$ is transversal to $0\times \Delta \subset  \hat{\mathcal{Y}}_\mu \times \M_s\times \M_s$ by the diffenrential
\begin{align*}
D_{( \hat{\mathsf{C}}_0, \mathsf{C}_\infty, \eta)}\mathcal{F}: 
 T_{\hat{\mathsf{C}}_0} \mathcal{B}^*_{\mu,sw}\oplus T_{\mathsf{C}_\infty}\M_s\oplus T_\eta \mathcal{Z} &\to T_0\hat{\mathcal{Y}}_{g(b),\mu}\oplus T_{\mathsf{C}_\infty}\M_s \oplus T_{\mathsf{C}_\infty}\M_s\\
(\underline{\hat{\mathsf{C}}}_0,\underline{{\mathsf{C}}}_\infty,\zeta) &\mapsto  (\widehat{\underline{SW}}_{\eta}(\underline{\hat{\mathsf{C}}}_0) +\zeta, \partial_\infty \underline{\hat{\mathsf{C}}}_0,\underline{{\mathsf{C}}}_\infty).
\end{align*} 
Then apply Sard-Smale to the projection
\[
\pi: \mathcal{F}^{-1}(0\times \Delta) \to \mathcal{Z}
\]
to show that $\mathcal{Z}^0_{reg}$, the set of regular values of $\pi$, is of the second category in the sense of Baire (a countable intersection of open dense sets). 

For each $\eta \in \mathcal{Z}^0_{reg}$, the map 
\begin{align*}
\mathcal{F}_\eta : \hat{\mathcal{C}}^*_{\mu,sw}/\hat{\mathcal{G}}_{\mu,ex}\times \M_s&\to \hat{\mathcal{Y}}_\mu \times \M_s\times \M_s\\
( \hat{\mathsf{C}},\mathsf{C}) &\mapsto ( \widehat{{SW}}_{\eta}(\hat{\mathsf{C}}),\partial_\infty \hat{\mathsf{C}}, \mathsf{C}).
\end{align*}
is transversal to $0\times \Delta \subset  \hat{\mathcal{Y}}_\mu \times \M_s\times \M_s$. Let $pr_1$ be the projection to the first summand:
\[
pr_1: \hat{\mathcal{Y}}_\mu \times \M_s\times \M_s \to  \hat{\mathcal{Y}}_\mu.
\]
Then $Dpr_1\circ D\mathcal{F}_\eta$ must be surjective since  $Dpr_1(0\times \Delta)$ is zero. Hence 
\begin{align*}
D_{( \hat{\mathsf{C}}_0, \mathsf{C}_\infty)}(pr_1 \circ \mathcal{F}_\eta): 
 T_{\hat{\mathsf{C}}_0} \mathcal{B}^*_{\mu,sw}\oplus T_{\mathsf{C}_\infty}\M_s &\to T_0\hat{\mathcal{Y}}_{g(b),\mu}\\
(\underline{\hat{\mathsf{C}}}_0,\underline{{\mathsf{C}}}_\infty) &\mapsto  (\widehat{\underline{SW}}_{\eta}\underline{\hat{\mathsf{C}}}_0) .
\end{align*} 
is surjective. This means that $ \widehat{\underline{SW}}_{\eta}$ is surjective, i.e. $H^2_{\hat{\mathsf{C}}_0}= 0$. By the last several terms of the long exact sequence \ref{equ:longExact}
\begin{equation}
\cdots
\to H^1_{\hat{\mathsf{C}}_0}
\stackrel{\partial_\infty}{\to} H^1(B_{\hat{\mathsf{C}}_0})
\to H^2(F_{\hat{\mathsf{C}}_0}) 
\to H^2_{\hat{\mathsf{C}}_0}=0
\to H^2(B_{\hat{\mathsf{C}}_0}) = 0
\to 0,
\tag{\textbf L}
\end{equation}
$H^2(F_{\hat{\mathsf{C}}_0}) =0$ if and only if $\partial_\infty$ is surjective. This is equivalent to say that $\partial_\infty: \widehat{\M}(X_0,\eta) \to \M_s$ is a submersion at $\hat{\mathsf{C}}_0$.

Recall that
\begin{align}
\mathcal{F}_0\begin{pmatrix}
        A\\
          \Phi\\
    \end{pmatrix}
    &=
    \begin{pmatrix}
         d^* A\\
          \D_A \Phi \\
    \end{pmatrix}, \\
\label{equ:0defF1}\mathcal{F}_{1,\eta}\begin{pmatrix}
        A\\
          \Phi\\
    \end{pmatrix}
    &=     F^{+}_A + i\eta - \rho^{-1}(\sigma(\Phi, \Phi)).
\end{align}
Fix a $\mathsf{C}_\infty \in \M_s$, then
\begin{align*}
\mathcal{F}_{\mathsf{C}_\infty }:  \partial_\infty^{-1}(\mathsf{C}_\infty)/\hat{\mathcal{G}}_{\mu}  \times \mathcal{Z} &\to \hat{\mathcal{Y}}_\mu\\
( \hat{\mathsf{C}},\eta) &\mapsto  \mathcal{F}_{1,\eta}(\hat{\mathsf{C}})
\end{align*}
is transversal to $0 \in  \hat{\mathcal{Y}}_\mu $. As above, we can find a set $\mathcal{Z}_{reg}^1$ of the second category in the sense of Baire, such that for each $\eta \in \mathcal{Z}_{reg}$, $\mathcal{F}_{\mathsf{C}_\infty,\eta } =\mathcal{F}_{1,\eta}$ is transversal to $0 \in  \hat{\mathcal{Y}}_\mu $. This means that
\begin{equation}\label{equ:surjective-for-solution}
 H^2(F_{\hat{\mathsf{C}}_0}) =0
\end{equation} 
for any $\hat{\mathsf{C}}_0 \in ( \partial_\infty^{-1}(\mathsf{C}_\infty)/\hat{\mathcal{G}}_{\mu}  )\cap \mathcal{F}_{1,\eta}^{-1}(0)$. 
\end{proof}

\begin{prop}\label{prop:noncompactTransversality}
Let $\hat{N} = X_0$ and $N= \partial X_0 = \S^1\times \S^2$. Suppose $\S^1\times \{pt\} \subset \partial X_0 \subset X_0$ represents a generator of $H_1(X_0;\Z) =\Z$. Let $\ss$ be any $\text{spin}^c$ structure of $X$. Let $\hat{\ss}$ be the restriciton of $\ss$ on $X_0$. We can choose a generic perturbation $\eta$ on $X_0$ such that if $\hat{\mathsf{C}}_0$ is an $\eta$-monopole, it is irreducible and $H^2(F(\hat{\mathsf{C}}_0))=0$.
\end{prop}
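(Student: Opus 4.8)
The plan is to follow the proof of Proposition~\ref{prop:noncompactTransversality-general} almost verbatim and then upgrade it so that a single generic perturbation works simultaneously for all asymptotic values $\mathsf{C}_\infty\in\M_s$; the hypothesis that $[\S^1\times\{pt\}]$ generates $H_1(X_0;\Z)$ is exactly what makes this upgrade possible.

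First I would reuse the wall-avoidance step and the first (parametrized over $\M_s$) transversality argument from the proof of Proposition~\ref{prop:noncompactTransversality-general}: these already produce a residual set $\mathcal{Z}^0_{reg}$ of perturbations such that every $\eta$-monopole $\hat{\mathsf{C}}_0$ on $X_0$ is irreducible and has $H^2_{\hat{\mathsf{C}}_0}=0$, with no constraint on $\partial_\infty\hat{\mathsf{C}}_0$. Given this, the long exact sequence \ref{equ:longExact} (together with $H^2(B_{\hat{\mathsf{C}}_0})=0$) shows that $H^2(F_{\hat{\mathsf{C}}_0})=0$ is equivalent to surjectivity of $\partial_\infty\colon H^1_{\hat{\mathsf{C}}_0}\to H^1(B_{\hat{\mathsf{C}}_0})=T_{\partial_\infty\hat{\mathsf{C}}_0}\M_s$, i.e.\ to the restriction map $\partial_\infty\colon\widehat{\M}(X_0,\eta)\to\M_s$ being a submersion at $[\hat{\mathsf{C}}_0]$. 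So the only thing left is to choose $\eta\in\mathcal{Z}^0_{reg}$ making this hold at every $\eta$-monopole at once.

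Here is where the hypothesis enters. Since $[\S^1\times\{pt\}]=[\gamma]$ generates $H_1(X_0;\Z)=\Z$, the restriction $H^1(X_0;\R)\to H^1(\partial X_0;\R)=\R$ is an isomorphism. A tangent vector to $\M_s=\S^1$ at $\mathsf{C}_\infty$ is the class of a harmonic imaginary $1$-form $b$ on $\partial X_0$, and by the isomorphism $b$ is the boundary restriction of a closed imaginary $1$-form $\tilde{b}$ on $X_0$ that is cylindrical on the neck; adding $\tilde{b}$ --- cut off as in \eqref{equ:cut} --- to the connection component of a configuration shifts its asymptotic value by $b$ while fixing the asymptotic spinor. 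Consequently the $\M_s$-family of fibres $\{\widehat{\M}(X_0,\eta)\cap\partial_\infty^{-1}(\mathsf{C}'_\infty)\}_{\mathsf{C}'_\infty\in\M_s}$ is identified with the family, parametrized by the \emph{compact} circle $\M_s$, of moduli spaces of $\tilde{b}$-twisted monopoles over the single fixed value $\mathsf{C}_\infty$. I would then run the second transversality argument of Proposition~\ref{prop:noncompactTransversality-general} at the fixed value $\mathsf{C}_\infty$, but with the perturbation space enlarged to contain this one-parameter family of connection twists; since the family is compact, Sard--Smale now yields a single residual set of perturbations on which $H^2(F_{\hat{\mathsf{C}}_0})=0$ for all monopoles over all of $\M_s$ at once, the cohomological isomorphism being precisely what forces the linearized boundary evaluation to be onto $T\M_s$ uniformly in the twist. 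Intersecting with $\mathcal{Z}^0_{reg}$ produces the required $\eta$.

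The whole point --- and the main obstacle --- is this uniformity. Proposition~\ref{prop:noncompactTransversality-general} only delivers a good residual set of perturbations for each \emph{fixed} $\mathsf{C}_\infty$, and since $\M_s\cong\S^1$ is uncountable these cannot simply be intersected over all of it. The homological hypothesis converts ``varying the asymptotic value over $\M_s$'' into ``twisting the equation on $X_0$ by a compact circle of genuine closed $1$-forms'', so that the one Sard--Smale argument that controls $H^2(F_{\hat{\mathsf{C}}_0})$ for a single $\mathsf{C}_\infty$ now controls the entire circle of boundary values.
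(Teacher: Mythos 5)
Your key observation is the right one: since $[\S^1\times\{pt\}]$ generates $H_1(X_0;\Z)$, the restriction $H^1(X_0;\R)\to H^1(\partial X_0;\R)$ is onto, so any change of asymptotic value in $\M_s$ can be realized by adding a closed imaginary $1$-form $\hat a$ on $X_0$ to the connection component. This is exactly the hypothesis the paper exploits. What is missing is the precise reason this lets one get away with the \emph{single} residual set $\mathcal Z^1_{reg}$ produced by the second half of Proposition~\ref{prop:noncompactTransversality-general} for one fixed $\mathsf C_\infty$.

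Your proposed implementation --- enlarge the perturbation space by the compact circle of twists and appeal to Sard--Smale plus compactness --- does not close this gap as stated. A parametrized Sard--Smale over $\mathcal Z\times\M_s$ produces a residual subset of $\mathcal Z\times\M_s$ of parameter pairs $(\eta,t)$ where the twisted equation is transversal; that is not the same as a residual set of $\eta\in\mathcal Z$ for which \emph{every} $t$ works, and compactness of $\M_s$ alone does not convert the former into the latter. The paper's actual step, which you hint at but never state, is to observe from \eqref{equ:0defF1} and \eqref{equ:differentialOfF1} that the linearization $d_{(\hat\Phi,\hat A)}\mathcal F_{1,\eta}$ depends only on the spinor $\hat\Phi$, not on $\hat A$; and that closed $\hat a$ leaves the equation $\mathcal F_{1,\eta}=0$ unchanged. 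Therefore any $\eta$-monopole $(\hat\Phi,\hat A)$ with arbitrary boundary value can be translated by such an $\hat a$ to an $\eta$-monopole $(\hat\Phi,\hat A+\hat a)$ with boundary $\mathsf C_\infty$, where transversality is already known for $\eta\in\mathcal Z^1_{reg}$, and the two configurations have literally the same differential. Once this translation-invariance of the linearization is in hand, no further Sard--Smale argument and no compactness of $\M_s$ is needed; the residual set from the fixed $\mathsf C_\infty$ already does the job, and one simply takes $\mathcal Z_{reg}=\mathcal Z^0_{reg}\cap\mathcal Z^1_{reg}$.
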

\begin{proof}

We just need to show that the perturbation $\eta$ in Propsition \ref{prop:noncompactTransversality-general} works for all $\partial_\infty \hat{\mathsf{C}}_0 \in \M_s$. Pick any $\mathsf{C}_\infty \in \M_s$, remain to show that if $\hat{\mathsf{C}}_0$ is an $\eta$-monopole and $\partial_\infty \hat{\mathsf{C}}_0 = \mathsf{C}_\infty$, $\hat{\mathsf{C}}_0$ is irreducible and $H^2(F(\hat{\mathsf{C}}_0))=0$.
 
Let $(0, A)$ be a representative of $ {\mathsf{C}}_\infty$. Choose any $(\hat\Phi, \hat A)\in \hat{\mathcal{C}}^*_{\mu,sw}$, then $\partial_\infty (\hat\Phi, \hat A)$ is an $(\eta|_N)$-monopole on ${N}$. We want to show that even if $\partial_\infty(\hat\Phi, \hat A)$ does not represent $ {\mathsf{C}}_\infty$, $d_{(\hat\Phi, \hat A)}\mathcal{F}_1|_{\partial_\infty^{-1}(\partial_\infty(\hat A))/\hat{\mathcal{G}}_{\mu}} $ is still surjective.

Since $\eta$ is zero on the neck, $\partial_\infty \hat A$ is closed and $\partial_\infty \hat\Phi =0$  (see the proof of Proposition \ref{prop:dimOfReducible}). 
Hence $ \partial_\infty(\hat A)-A$ is closed. Since $H^1(\hat{N};\R)\to H^1(N;\R)$ is surjective, 
one can find a closed form $\hat a$ on $\hat N$ such that $ \partial_\infty(\hat A)-A =\partial_\infty(\hat a) +df $ for some function $f$ on $N$. Hence $ \partial_\infty(\hat A + \hat a) = A +df$, which belongs to the gauge equivalence class of $A$. This means $\partial_\infty(\hat A + \hat a) = {\mathsf{C}}_\infty$. 
Because $\hat a$ is closed, if $(\hat\Phi, \hat A)$ is a solution of $\mathcal{F}_{1,\eta}$, $(\hat\Phi, \hat A + \hat a)$ is also a solution of $\mathcal{F}_{1,\eta}$. By (\ref{equ:surjective-for-solution}),  
\begin{align}
 d_{(\hat\Phi, \hat A + \hat a)}(\mathcal{F}_1|_{\partial_\infty^{-1}(\mathsf{C}_\infty)/\hat{\mathcal{G}}_{\mu}}): T_{(\hat\Phi, \hat A + \hat a)} \partial_\infty^{-1}(\mathsf{C}_\infty)/\hat{\mathcal{G}}_{\mu} &\to \hat{\mathcal{Y}}_\mu\\
 (\alpha,\phi) &\mapsto d^+ \alpha  -\rho^{-1}(\sigma(\hat\Phi, \phi)+\sigma(\phi, \hat\Phi)) \label{equ:differentialOfF1}
\end{align}
is surjective. Note that $d_{(\hat\Phi, \hat A)}\mathcal{F}_1$ does not depend on $\hat A$. Also an element of either $T_{(\hat\Phi, \hat A )} \partial_\infty^{-1}(\partial_\infty(\hat A))/\hat{\mathcal{G}}_{\mu}$ or $T_{(\hat\Phi, \hat A + \hat a)} \partial_\infty^{-1}(\mathsf{C}_\infty)/\hat{\mathcal{G}}_{\mu}$ can be written as $(\alpha,\phi)$ such that $\partial_\infty \alpha $ represents $0\in H^1(N;\R)$.
Hence
\[
d_{(\hat\Phi, \hat A)}\mathcal{F}_1|_{\partial_\infty^{-1}(\partial_\infty(\hat A))/\hat{\mathcal{G}}_{\mu}} = d_{(\hat\Phi, \hat A + \hat a)}\mathcal{F}_1|_{\partial_\infty^{-1}(\mathsf{C}_\infty)/\hat{\mathcal{G}}_{\mu}}
\]
is surjective.

Let
\[
\mathcal{Z}_{reg} = \mathcal{Z}^0_{reg} \cap \mathcal{Z}^1_{reg}.
\]
For any $\eta \in \mathcal{Z}_{reg}$, if $\hat{\mathsf{C}}_0$ is an $\eta$-monopole, it is irreducible and $H^2(F(\hat{\mathsf{C}}_0))=0$. Moreover, $\mathcal{Z}_{reg}$ is still a countable intersection of open and dense sets, so it is of the second category in the sense of Baire.
\end{proof}

\begin{remark}
The statement of Proposition \ref{prop:noncompactTransversality} is not true in general. If the boundary $N = \S^1\times \S^2$ and $L^1_{top} =0$, we must have 
\[
\dim H^2(F(\hat{\mathsf{C}}_0)) = \dim H^2( \widehat{\mathcal{K}}_{\hat{\mathsf{C}}_0}) + 1.
\] 
To prove this, it suffices to find an element in $T_{\hat{\mathsf{C}}_0} \hat{\mathcal{C}}_{\mu,sw} $, such that its image is not in the image of $T_{\hat{\mathsf{C}}_0} \partial_\infty^{-1}(\mathsf{C}_\infty) $. Indeed, there exists a $1$-form $\alpha \in \Omega^1(\hat N)$, such that $\partial_\infty \alpha $ generates $H^1(N)$ (namely $(\alpha,0)\notin T_{\hat{\mathsf{C}}_0} \partial_\infty^{-1}(\mathsf{C}_\infty) $), and $d^+ \alpha$ is a nonzero element in $H^2(\hat{N})$. 
Conversely, if $d^+ \alpha'$ is nonzero in $H^2(\hat{N})$, then it's not compactly supported, otherwise it would be orthogonal to any self dual harmornic $2$-forms. Hence $\partial_\infty \alpha' $ is nonzero in $H^1(N)$. Therefore 
\begin{equation}\label{extraVector}
\left. d\mathcal{F}_1 (\alpha,0) \neq d\mathcal{F}_1\right|_{T ( \partial_\infty^{-1}(\mathsf{C}_\infty) )} (\beta,0)
\end{equation}
for any $(\beta,0) \in T ( \partial_\infty^{-1}(\mathsf{C}_\infty) )$. When the virtual dimension of the moduli space is less then $1$, for a generic perturbation such that for any solution $(\hat{A},\hat{\Phi})$,
\begin{equation}\label{extraVector2}
d_{(\hat{A},\hat{\Phi})}\mathcal{F}_1 (\alpha,0) \notin \text{im}\left. d_{(\hat{A},\hat{\Phi})}\mathcal{F}_1 \right|_{T\partial_\infty^{-1}(\mathsf{C}_\infty) },
\end{equation}
even though $\hat{b}_+ > 0$. This is because in this case the connection part is not able to kill $d_{(\hat{A},\hat{\Phi})}\mathcal{F}_1 (\alpha,0) $ by (\ref{extraVector}), and the spinor part is responsible to kill the other complement, instead of $d_{(\hat{A},\hat{\Phi})}\mathcal{F}_1 (\alpha,0) $, otherwise it will produce one more dimension of the cokernel and one more dimension of the moduli space, which would not happen by the classical transversality argument. Hence $\dim H^2(F(\hat{\mathsf{C}}_0)) = \dim H^2( \widehat{\mathcal{K}}_{\hat{\mathsf{C}}_0}) + 1$ for any solution $\hat{\mathsf{C}}_0$.

In fact, the condition on the virtual dimension can be omitted. $d^+ \alpha$ is not compactly supported , and the harmonic projection $\H(d^+ \alpha)$ satisfies
\[
\partial_\infty^0 \H(d^+ \alpha) \neq 0
\]
On the other hand, the second term 
\[
-\rho^{-1}(\sigma(\hat\Phi, \phi)+\sigma(\phi, \hat\Phi))
\]
of (\ref{equ:differentialOfF1}) is in $L_\mu$ since $\partial_\infty \hat{\Phi} = 0$. Hence (\ref{extraVector2}) is true as long as all solutions on the boundary are reducible.

This example is a counter example of \cite{Nicolaescu2000NotesOS} Proposition 4.4.1. The equation
\[
\dim H^2( \widehat{\mathcal{K}}_{\hat{\mathsf{C}}_0})  = \hat{b}_+
\]
for $\hat \Phi=0$ computed in \cite{Nicolaescu2000NotesOS} Page 404, combined with the equation
\[
\left. \dim \ker_{ex} (\textbf{ASD}^*) \right|_{\Omega^2(\hat{N})} =  \hat{b}_+ + \dim L^2_{top}
\]
computed in \cite{Nicolaescu2000NotesOS} Page 312, also shows the the existence of $\alpha$ satisfying (\ref{extraVector}) without any explicit construction.
\end{remark}

\begin{prop}\label{prop:triObs}
For $\hat{N}= X_0$, $\widehat{\S^1\times \S^3}$ or $D^3\times \S^1$ with positive scalar curvature metric $\hat{g}$ chosen in subsection \ref{subsection:PSCmetric}, We can choose suitable perturbations $\eta = \eta(\hat{N})$ such that if $\hat{\mathsf{C}}_0$ is an $\eta$-monopole, $H^2(F(\hat{\mathsf{C}}_0))=0$.
\end{prop}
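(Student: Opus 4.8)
The plan is to split the three manifolds into two groups. For $\hat N = X_0$ there is essentially nothing new: this is exactly Proposition \ref{prop:noncompactTransversality}, whose hypothesis that $\S^1\times\{pt\}\subset\partial X_0$ generates $H_1(X_0;\Z)$ is our standing assumption that $\gamma$ is a homologically essential generator, and which already produces a generic $\eta$ for which every $\eta$-monopole on $X_0$ is irreducible with $H^2(F(\hat{\mathsf C}_0))=0$. Moreover that $\eta$ can be taken to vanish on the neck, so it will be compatible with the trivial perturbation chosen on the model pieces when we glue. Thus the genuine content is the two positive scalar curvature model pieces $D^3\times\S^1$ and $\widehat{\S^1\times\S^3}$ (both diffeomorphic to $\S^1\times D^3$), for which I would take $\eta=0$.

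For such a model piece $\hat N$ with the metric $\hat g$ of Subsection \ref{subsection:PSCmetric}, I would first repeat the argument of Proposition \ref{prop:dimOfReducible}: since $\hat g$ has positive scalar curvature and the round product metric on the neck $\S^1\times\S^2$ also does, the Weitzenb{\"o}ck formula (see \cite{kronheimer_mrowka_2007}) forces every finite-energy monopole to be reducible, $\hat{\mathsf C}_0=(\hat A,0)$ with $\hat A$ asymptotic to a flat connection and $\mathsf{C}_\infty=\partial_\infty\hat{\mathsf C}_0$ reducible as well. At such a configuration the linearization $\widehat{\underline{SW}}_{\hat{\mathsf C}_0}$ is block diagonal --- the cross terms $\mathbf{c}(\dot a)\psi_0$ and the differential at $\psi_0=0$ of the quadratic term $q$ both vanish since $\psi_0=0$ --- so the complex $F_{\hat{\mathsf C}_0}$ decouples into a twisted Dirac complex and an \textbf{ASD} complex, and $H^2(F_{\hat{\mathsf C}_0})$ splits accordingly. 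The Dirac summand is $\coker(\D_{\hat A})$ on the relevant $L^2$-spaces; because $\hat A$ is flat and $\hat g$ has positive scalar curvature, the Weitzenb{\"o}ck identity recalled in Subsection \ref{subsection:PSCmetric} gives $\ker\D_{\hat A}=0$, and because the neck metric is the round product we have $\eta_{Dir}(\partial_\infty\hat A)=0$ (\cite{Nicolaescu98}) and $\eta_{sign}=0$ (\cite{Komuro84}), so the relevant Atiyah--Patodi--Singer index vanishes and hence $\coker(\D_{\hat A})=0$.

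It remains to kill the \textbf{ASD} summand, for which I would feed the short exact sequence of complexes $0\to F_{\hat{\mathsf C}_0}\to\widehat{\mathcal K}_{\hat{\mathsf C}_0}\to B_{\hat{\mathsf C}_0}\to 0$ into the long exact sequence \ref{equ:longExact}. Since $H^2(\hat N;\R)=0$ for $\S^1\times D^3$, there are no $L^2$ self-dual harmonic $2$-forms, so $\hat{b}_+=0$; with the Dirac cokernel already shown to vanish, the computation on page~404 of \cite{Nicolaescu2000NotesOS} then gives $H^2(\widehat{\mathcal K}_{\hat{\mathsf C}_0})=\hat{b}_+=0$. As also $H^2(B_{\hat{\mathsf C}_0})=0$, the long exact sequence collapses to $H^2(F_{\hat{\mathsf C}_0})\cong\coker\bigl(\partial_\infty\colon H^1_{\hat{\mathsf C}_0}\to H^1(B_{\hat{\mathsf C}_0})=T_{\mathsf{C}_\infty}\M_\sigma\bigr)$. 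Finally, exactly as in Proposition \ref{prop:dimOfReducible} the moduli space of reducibles on the model piece is $\Hom(\pi_1(\hat N),U(1))\cong\S^1$, the boundary moduli space $\M_\sigma$ of $\S^1\times\S^2$ is $\Hom(\pi_1(\S^1\times\S^2),U(1))\cong\S^1$, and the inclusion of the neck induces an isomorphism on $\pi_1$; hence $\partial_\infty$ is identified with the identity of a circle, it is surjective on tangent spaces, and $H^2(F_{\hat{\mathsf C}_0})=0$.

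The step that I expect to demand the most care is the functional-analytic bookkeeping: one has to verify that the decoupling of $F_{\hat{\mathsf C}_0}$ at a reducible, the identification of $H^1_{\hat{\mathsf C}_0}$ with the tangent space of the circle of flat connections, and the vanishing of the Dirac cokernel are all taken with the decay rate ($L^2_\mu$ versus $L^2_{ex}$) and the asymptotic boundary conditions that literally match Chapter~4 of \cite{Nicolaescu2000NotesOS}, so that the quoted formulas apply as stated. A secondary point is to pin down precisely which manifold $\widehat{\S^1\times\S^3}$ denotes, so that its scalar curvature, its $\hat{b}_+$, and the restriction $\pi_1(\S^1\times\S^2)\to\pi_1$ can be read off --- for $\S^1\times D^3$ all three are immediate.
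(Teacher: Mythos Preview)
Your argument is correct. For $X_0$ and for the Dirac contribution on the model pieces you do exactly what the paper does: invoke Proposition~\ref{prop:noncompactTransversality} for $X_0$, and on $\S^1\times D^3$ use the Weitzenb{\"o}ck formula together with the APS index and the vanishing of $\eta_{Dir}$, $\eta_{sign}$ on the round $\S^1\times\S^2$ to kill $\ker_{ex}\D^*_{\hat A_0}$. Where you diverge is in the treatment of the \textbf{ASD} summand. The paper appeals directly to Nicolaescu's identification (Example~4.1.24 and Proposition~4.3.30 in \cite{Nicolaescu2000NotesOS}) $H^2(F_{\hat{\mathsf C}_0})\cong \ker_{ex}\D^*_{\hat A_0}\oplus H^2_+(\hat N)\oplus L^2_{top}$ and then checks that $H^2_+(\S^1\times D^3)=0$ and $L^2_{top}=\im\bigl(H^2(\S^1\times D^3)\to H^2(\S^1\times\S^2)\bigr)=0$; you instead feed the short exact sequence \ref{equ:shortExact} into the long exact sequence \ref{equ:longExact}, use $H^2(\widehat{\mathcal K}_{\hat{\mathsf C}_0})=\hat b_+=0$, and identify the remaining obstruction with $\coker\bigl(\partial_\infty\colon H^1_{\hat{\mathsf C}_0}\to T_{\mathsf C_\infty}\M_\sigma\bigr)$, which vanishes because $\partial_\infty\colon\M(\S^1\times D^3)\to\M(\S^1\times\S^2)$ is the identity of a circle. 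The paper's route is more self-contained (it reads the answer off a single formula), while yours has the advantage of anticipating exactly the geometric fact used later in Section~\ref{subsection:proofUnparemetrized}, namely that this restriction map is a diffeomorphism; your closing caveat about matching the $L^2_\mu$ versus $L^2_{ex}$ conventions is well placed, since the citation of page~404 needs the Dirac cokernel to have already been removed, as you note.
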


\begin{proof}
As in the usual argument of transversality, we just need to take care of the boundary term to prove that, if $b_+(\hat{N})>0$, we can choose a pertubation $\eta \in H^2_+(\hat{N})$ such that all $\eta$-monopoles are strongly regular (and irreducible) (Proposition \ref{prop:noncompactTransversality}). Since $H^2_+(X_0)$ is assumed to be nontrivial, the statement is true for $X_0$.

For $\hat{N} \simeq \S^1\times \S^3$, $D^3\times \S^1$ or $\S^2\times D^2$, all monopoles are reducible. Let $\hat{\mathsf{C}}_0 = (\hat{A}_0,0)$ be a reducible monopole for the SW equations without perturbation. The connection $\hat{A}_0$ on the cylindrical manifold $\hat{N}$ gives an asymptotically cylindrical Dirac operator $\D^*_{\hat{A}_0}$ with
\[
\partial_\infty \D^*_{\hat{A}_0} = \mathfrak{D}^*_{{A}_0}.
\]
The middle column of the Obstruction space diagram \ref{equ:3O} comes from the exact sequence (\cite{Nicolaescu2000NotesOS} Proposition 4.3.30)
\[
0\to H^2(F(\hat{\mathsf{C}}_0)) \to \text{ker}_{\text{ex}}\hat{\mathcal{T}}^*_{\hat{C}_0} \stackrel{\partial_\infty^0}{\to} \im(T_1\hat{G}_0 \stackrel{\partial_\infty}{\to} T_1G_\infty) \to 0.
\]
Recall that in (\ref{defOfTWithoutMu}) we define
\[
\hat{\mathcal{T}}_{\hat{\mathsf{C}}_0} := \widehat{\underline{SW}}_{\hat{\mathsf{C}}_0} \oplus \frac{1}{2}\mathfrak{L}^*_{\hat{\mathsf{C}}_0}.
\]
If $\mathbf{i}f\in T_1\hat{G}_0$, then it's in the kernel of $\mathfrak{L}_{\hat{\mathsf{C}}_0}$, and therefore in $\ker_{ex}\hat{\mathcal{T}}^*_{\hat{C}_0}= \ker_{ex} (\widehat{\underline{SW}}^*_{\hat{\mathsf{C}}_0} \oplus \frac{1}{2}\mathfrak{L}_{\hat{\mathsf{C}}_0})$. On the other hand, if
\[
(\Psi, \mathbf{i}f)\in  L_{ex}^{1,2}(\hat{\S}_{\hat{\sigma}}^- \oplus \mathbf{i}\Lambda_+^2T^*\hat{N}) \oplus L_{ex}^{1,2}( \mathbf{i}\Lambda^0 T^*\hat{N})
\]
is in $\ker_{ex}\hat{\mathcal{T}}^*_{\hat{C}_i}$, then $\mathbf{i}f\in T_1G_0$. Thus
\[
  \partial^0_\infty \ker_{ex}\hat{\mathcal{T}}^*_{\hat{C}_0}
 \cong \partial_\infty T_1G_0.
\]
Namely, $H^2(F(\hat{\mathsf{C}}_0))$ doesn't contain constant functions. Hence 
\[H^2(F(\hat{\mathsf{C}}_0)) = \ker_{\text{ex}}\D^*_{\hat{A}_0} \oplus \ker_{\text{ex}}(d^+ \oplus d^*)^*|_{\Lambda_+^2(T^*\hat{N}) \oplus \Lambda^0_0 (T^*\hat{N})}.
\]
Then by the computation of the \textbf{ASD} operator $d^+ \oplus d^*
$ (\cite{Nicolaescu2000NotesOS} Example 4.1.24), 
\begin{equation}\label{equ:obstructionF=kerexD*}
H^2(F(\hat{\mathsf{C}}_0)) = \text{ker}_{\text{ex}}\D^*_{\hat{A}_0}\oplus H^2_+(\hat{N})\oplus L^2_{top}, 
\end{equation}
where $L^2_{top}=\im( i^*:H^2(\hat{N})\to H^2(\partial\hat{N}))$ for inclusion map $i:N\to \hat{N}$. Thus the second and the third components are trivial for $\hat{N}=\widehat{\S^1\times \S^3} $ or ${D^3\times \S^1}$. Now compute the dimension of $\text{ker}_{\text{ex}}\D^*_{\hat{A}_0} $. Since each of them has a positive scalar curvature metric, by the Weitzenb{\"o}ck formula, the twisted Dirac operater is invertible since $A_0$ is flat. This means that $\ker \mathfrak{D}^*_{A_0} = 0$ and therefore
\begin{equation}\label{equ:kerex=ker}
\ker_{ex}\D^*_{\hat{A}_0} = \ker_{L^2}\D^*_{\hat{A}_0}.
\end{equation}
Hence
\[
I_{APS}(\D_{\hat{A}_0} ) = \dim_{\C} \ker_{L^2} \D_{\hat{A}_0} -  \dim_{\C} \ker_{L^2} \D^*_{\hat{A}_0},
\]
where $I_{APS}(\hat{L})$ is the Atiyah-Patodi-Singer index of the APS operator $\hat{L}$. One can also prove that $ \ker_{L^2} \D_{\hat{A}_0}$ is trivial by the Weitzenb{\"o}ck formula (see \cite{Nicolaescu2000NotesOS} Page 323). Hence 
\[
-\dim \text{ker}_{\text{ex}}\D^*_{\hat{A}_0}= I_{APS}(\D_{\hat{A}_0} ).
\]
By the Atiyah-Patodi-Singer index theorem (\cite{atiyah_patodi_singer_1975}) we have 
\[
I_{APS}(\D_{\hat{A}_0} ) = \frac{1}{8} \int_{\hat{N}} (p_1(\hat{\nabla}^{\hat{g}}) + c_1(\hat{A}_0)^2) -\frac{1}{2}(\dim\ker\mathfrak{D}_{A_0}  +\eta_{Dir}({A_0})),
\]
where $\hat{\nabla}^{\hat{g}}$ is the Levi-Civita connection of $\hat{g}$, $p_1(\hat{\nabla}^{\hat{g}})$ and $c_1(\hat{A}_0)$ are the first Pontryagin class and the first Chern class determined by the Chern-Weil construction, and $\eta_{Dir}({A_0})$ is the eta invariant of the Dirac operator $\mathfrak{D}_{A_0}$. 
For any $4$-manifold with boundary, one has ``signature defect'' (see \cite{Nicolaescu2000NotesOS} (4.1.34), see also \cite{atiyah_patodi_singer_1975}, \cite{atiyah_patodi_singer_1975II} and \cite{atiyah_patodi_singer_1976} for the motivation)
\[
\eta_{sign}(g) = \frac{1}{3}\int_{\hat{N}} p_1(\hat{\nabla}^{\hat{g}}) - \sigma(\hat{N}) 
\]
where $\eta_{sign}(g)$ is the eta invariant of the metric $g = \partial_\infty \hat{g}$. Also recall that
\[
\mathbf{F}(g,A_0) := 4\eta_{Dir}({A_0})+ \eta_{sign}(g).
\]
Combine all of these, one has 
\[
8\dim \text{ker}_{\text{ex}}\D^*_{\hat{A}_0} =\mathbf{F}(\partial_\infty\hat{C}_0)  + \sigma(\hat{N})  - \int_{\hat{N}} c_1(\hat{A}_0)^2.
\]
For $\hat{N} \simeq \S^1\times \S^3$, $D^3\times \S^1$ or $\S^2\times D^2$, $\sigma(\hat{N})  = 0$. 
For $\hat{N} \simeq D^3\times \S^1$ or $\S^2\times D^2$, $N = \partial_\infty\hat{N} =\S^1\times \S^2$, and the metric $\hat{g}$ chosen in subsection \ref{subsection:PSCmetric} ensures that $g = \partial_\infty \hat{g}$ is the product of canonical metric on $\S^1$ and $\S^2$. 
In this situation $\eta_{sign}(g) = 0$ (\cite{Komuro84}) and $\eta_{Dir}(\partial_\infty\hat{C}_0) = 0$ (\cite{Nicolaescu98} Appendix C). Hence $\mathbf{F}(\partial_\infty\hat{C}_0) = 0$. 
For $\hat{N} \simeq D^3\times \S^1$ or $\S^2\times D^2$, as shown in Proposition \ref{prop:dimOfReducible}, $\hat{A}_0$ is a flat connection. Hence for $\hat{N} \simeq D^3\times \S^1$ or $\S^2\times D^2$, $\dim \text{ker}_{\text{ex}}\D^*_{\hat{A}_0}=0$. 
So the first component of $H^2(F(\hat{\mathsf{C}}_0)) $ is also trivial. Thus $\hat{C}_0$ is strongly regular for $\hat{N}=\widehat{\S^1\times \S^3} $ or ${D^3\times \S^1}$ without perturbations.
\end{proof}

For $\hat{N}=\S^2\times D^2$, unfortunately, $L^2_{top}$ is $1$-dimensional ($ i^*:H^2({\S^2\times D^2})\to H^2(\S^2\times \S^1)$ is an isomorphism between two copies of $\Z$), so $H^2(F(\hat{\mathsf{C}}_2))$ is $1$-dimensional in the obstruction diagram for $\hat{\mathsf{C}}_1$ on $X_0$ and $\hat{\mathsf{C}}_2$ on $\S^2\times D^2$. However, we have
\begin{prop}\label{prop:trivialObs}
When $r$ is large enough, the obstruction space $\mathcal{H}^-_r$ for $X'=X_0 \cup_{\S^1\times \S^2}D^2\times  \S^2$ is still $0$.
\end{prop}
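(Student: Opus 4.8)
The plan is to extract $\mathcal{H}^-_r$ from the obstruction diagram \ref{equ:3O} for the splitting $X' = X_0\cup_{\S^1\times\S^2}(D^2\times\S^2)$, with $\hat{\mathsf{C}}_1$ a monopole on the cylindrical-end $X_0$ and $\hat{\mathsf{C}}_2$ a monopole on $D^2\times\S^2$ carrying the bullet metric of Subsection \ref{subsection:PSCmetric}. The left column of \ref{equ:3O} presents $\mathcal{H}^-_r$, for large $r$, as an extension $0\to\ker\Delta_-^c\to\mathcal{H}^-_r\to\ker\Delta_-^0\to 0$, so it is enough to show $\ker\Delta_-^c = 0$ and $\ker\Delta_-^0 = 0$.

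First I would assemble the inputs on the two sides. Choosing on $X_0$ the generic perturbation $\eta$ of Proposition \ref{prop:noncompactTransversality}, every monopole $\hat{\mathsf{C}}_1$ on $X_0$ is irreducible with $H^2(F(\hat{\mathsf{C}}_1)) = 0$; since an irreducible monopole has trivial stabilizer, the exact sequence $0\to H^2(F(\hat{\mathsf{C}}_1))\to\ker_{ex}\hat{\mathcal{T}}^*_{\hat{\mathsf{C}}_1}\xrightarrow{\partial_\infty^0}\im(T_1\hat G_1\to T_1 G_\infty)\to 0$ from the proof of Proposition \ref{prop:triObs} forces $\ker_{ex}\hat{\mathcal{T}}^*_{\hat{\mathsf{C}}_1} = 0$, hence $L_1^- = 0$ and $\mathfrak{C}_1^- = 0$. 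On the other side every monopole $\hat{\mathsf{C}}_2 = (\hat A_0,0)$ on $D^2\times\S^2$ is reducible (Corollary \ref{cor:dimOfReducible}), and as recorded just before the present statement --- using \ref{equ:obstructionF=kerexD*}, the vanishing of $\ker_{ex}\D^*_{\hat A_0}$ from the proof of Proposition \ref{prop:triObs}, and the fact that $i^*:H^2(D^2\times\S^2)\to H^2(\S^1\times\S^2)$ is an isomorphism --- $H^2(F(\hat{\mathsf{C}}_2)) = L^2_{top}$ is one-dimensional. Moreover $T_1\hat G_2 = \R$ (constant gauge transformations fixing the reducible $\hat{\mathsf{C}}_2$), so $\ker_{ex}\hat{\mathcal{T}}^*_{\hat{\mathsf{C}}_2} = H^2(F(\hat{\mathsf{C}}_2))\oplus T_1\hat G_2$ is two-dimensional and $\mathfrak{C}_2^- = \partial_\infty^0\ker_{ex}\hat{\mathcal{T}}^*_{\hat{\mathsf{C}}_2} = T_1 G_\infty$.

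The vanishing of $\ker\Delta_-^0$ is then immediate from the bottom row $0\to\ker\Delta_-^0\to\mathfrak{C}_1^-\oplus\mathfrak{C}_2^-\xrightarrow{\Delta_-^0}\mathfrak{C}_1^- + \mathfrak{C}_2^-\to 0$ of \ref{equ:3O}: with $\mathfrak{C}_1^- = 0$ the map $\Delta_-^0$ is, up to sign, the identity of $\mathfrak{C}_2^-$. For $\ker\Delta_-^c$, the top row $0\to\ker\Delta_-^c\to H^2(F(\hat{\mathsf{C}}_1))\oplus H^2(F(\hat{\mathsf{C}}_2))\xrightarrow{\Delta_-^c} L_1^- + L_2^-\to 0$ together with $H^2(F(\hat{\mathsf{C}}_1)) = 0$ reduces the claim to injectivity of $\partial_\infty^c$ on $H^2(F(\hat{\mathsf{C}}_2)) = L^2_{top}$. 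The generator $\omega$ of this line is an asymptotically cylindrical self-dual harmonic $2$-form whose de Rham class restricts nontrivially to $H^2(\S^1\times\S^2)$ (this is the defining property of $L^2_{top}$, which here is all of $H^2(\S^1\times\S^2;\R)$). Its limit over the neck $\R\times(\S^1\times\S^2)$ is the translation-invariant self-dual $2$-form $dt\wedge\alpha_0 + *_3\alpha_0$ attached to a harmonic $1$-form $\alpha_0$ on $\S^1\times\S^2$, and restriction to a cross-section gives $i^*[\omega] = [*_3\alpha_0]$; nontriviality of $i^*[\omega]$ forces $\alpha_0$ to generate $H^1(\S^1\times\S^2;\R)$. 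Since $T_{\mathsf{C}_\infty}\M_\sigma \cong H^1(\S^1\times\S^2;\R)$ (the reducible locus, as in Proposition \ref{prop:dimOfReducible}) and $\partial_\infty^c\omega = [\alpha_0]$, it is nonzero, so $\partial_\infty^c$ is injective on $H^2(F(\hat{\mathsf{C}}_2))$ and $\ker\Delta_-^c = 0$. Combining, $\mathcal{H}^-_r = 0$ for $r$ large.

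I expect the main obstacle to be the last identification, $\partial_\infty^c\omega = [\alpha_0]\neq 0$: one must check carefully that the non-$L^2$ self-dual form recorded by $L^2_{top}$ limits to a genuine tangent direction of $\M_\sigma$ rather than to a gauge direction in $T_1 G_\infty$, i.e. that its asymptotic $1$-form carries no $dt$-component and is harmonic but not exact. A secondary technical point is that \ref{equ:3O} is only \emph{asymptotically} exact, so one must invoke the stabilization of the small-eigenvalue spaces for $r\gg 0$ to pass to the exact dimension count $\dim\mathcal{H}^-_r = \dim\ker\Delta_-^c + \dim\ker\Delta_-^0 = 0$.
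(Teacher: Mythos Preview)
Your proof is correct and follows the same overall structure as the paper --- use the obstruction diagram \ref{equ:3O} to split $\mathcal{H}^-_r$ into $\ker\Delta_-^0$ and $\ker\Delta_-^c$, and kill each piece separately --- but your treatment of $\ker\Delta_-^c$ takes a genuinely different route.

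For $\ker\Delta_-^c$, the paper argues purely by dimension count and the abstract Lagrangian structure: it observes $\dim H^1_{\hat{\mathsf{C}}_2}=0$, whence $L_2^+=0$ in the first row of diagram \ref{equ:3T}, and then invokes the complementarity equation $L_i^+\oplus L_i^- = T_{C_\infty}\M_\sigma$ (from Nicolaescu \S4.1.5) to conclude $L_2^-=\R$. Since $H^2(F(\hat{\mathsf{C}}_1))\oplus H^2(F(\hat{\mathsf{C}}_2))=\R$ and $L_1^-+L_2^-=\R$, the map $\Delta_-^c$ is forced to be an isomorphism. You instead compute $\partial_\infty^c$ explicitly on the generator of $L^2_{top}$, tracing the asymptotic self-dual form $dt\wedge\alpha_0+*_3\alpha_0$ back to the harmonic $1$-form $\alpha_0$ and arguing $[\alpha_0]\neq 0$ in $H^1(N)\cong T_{C_\infty}\M_\sigma$. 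Your approach is more concrete and gives geometric insight into \emph{why} the obstruction dies, but --- as you yourself flag --- it requires verifying precisely how $\partial_\infty^c$ on $\ker_{ex}\hat{\mathcal{T}}^*$ interacts with the ASD decomposition, which is delicate. The paper's route via Lagrangian complementarity sidesteps this entirely: it never needs to know what $\partial_\infty^c\omega$ actually is, only that $L_2^-$ has the right dimension. That abstraction is the payoff of the complementarity lemma, and it makes the paper's argument shorter and more robust.
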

\begin{proof}
Let $\hat{N}_1 = X_0$, $\hat{N}_2 = \S^2\times D^2$. Then $N=\partial_\infty \hat{N}_i= \S^2\times \S^1$. The method is to trace through the obstruction diagram.

First, by Proposition \ref{prop:triObs}, $H^2(F(\hat{\mathsf{C}}_1))=0$, and $H^2(F(\hat{\mathsf{C}}_2))\cong \R$.

Next, we identify $\mathfrak{C}_i^-$. Recall that
\[
\hat{\mathcal{T}}_{\hat{\mathsf{C}}_i} := \widehat{\underline{SW}}_{\hat{\mathsf{C}}_i} \oplus \frac{1}{2}\mathfrak{L}^*_{\hat{\mathsf{C}}_i}.
\]
If $\mathbf{i}f\in T_1G_i$, then it's in the kernel of $\mathfrak{L}_{\hat{\mathsf{C}}_i}$, and therefore in $\ker_{ex}\hat{\mathcal{T}}^*_{\hat{C}_i}= \ker_{ex} (\widehat{\underline{SW}}^*_{\hat{\mathsf{C}}_i} \oplus \frac{1}{2}\mathfrak{L}_{\hat{\mathsf{C}}_i})$. On the other hand, if
\[
(\Psi, \mathbf{i}f)\in  L_{ex}^{1,2}(\hat{\S}_{\hat{\sigma}}^- \oplus \mathbf{i}\Lambda_+^2T^*\hat{N}_i) \oplus L_{ex}^{1,2}( \mathbf{i}\Lambda^0 T^*\hat{N}_i)
\]
is in $\ker_{ex}\hat{\mathcal{T}}^*_{\hat{C}_i}$, then $\mathbf{i}f\in T_1G_i$. Thus
\[
\mathfrak{C}_i^- =  \partial^0_\infty \ker_{ex}\hat{\mathcal{T}}^*_{\hat{C}_i}
 \cong \partial_\infty T_1G_i.
\]

For manifolds with cylindrical end, we can choose a generic perturbation in a $b^+$-dimensional space just as in the compact case (see page 404 of Nicolaescu's book \cite{Nicolaescu2000NotesOS} for a proof). 
Since $b^+( X_0)>0$, we can choose a compactly supported $2$-form $\eta$ such that all monopoles on $\hat{N}_1 = X_0$ are irreducible. 
Since $\hat{N}_2 = \S^2\times D^2$ and $N= \S^2\times \S^1$ admit PSC metric, all monopoles on $\hat{N}_2 = \S^2\times D^2$ and $N$ are reducible. So $\mathfrak{C}_1^- = 0$ and $ \mathfrak{C}_2^- \cong \R$. So $\Delta_-^0$ is an isomorphism in the obstruction diagram.  
Since each row of the diagram is asymptotically exact, any unit vector of $S_r (\ker\Delta_-^0) $ approaches $0$ as $r\to \infty$. So $S_r (\ker\Delta_-^0)=0$ and thus $\ker\Delta_-^0$ must be trivial when $r$ is large enough. Since each column of the diagram \ref{equ:3O} is exact, ${\H}_r^-  \cong \ker\Delta_-^c$.

Next we identify $L_i^-$. We have assumed $\M(X_0)$ is $1$-dimensional, and since $D^2\times  \S^2$ has a PSC metric and $H^1(D^2\times  \S^2) = 0$, $\M(D^2\times  \S^2)$ is only one reducible point. $\S^1\times \S^2$ also has a PSC metric and $H^1(\S^1\times \S^2) = 0$, so $\M(\S^1\times \S^2)$ is a circle of reducible solutions. So 
\begin{align*}
\dim_\R H^1_{\hat{\mathsf{C}}_1} &= 1, \\
\dim_\R H^1_{\hat{\mathsf{C}}_2} &= 0, \\
\dim_\R T_{C_\infty}\M_\sigma &= 1. 
\end{align*}
In the first row of diagram \ref{equ:3T}, $L^+_2 = \Delta_+^c (H^1_{\hat{\mathsf{C}}_2})$. Hence $L^+_2$ is certainly $0$. By complementarity equations from the Lagrangian condition (see (4.1.22) of Section 4.1.5 of Nicolaescu's book), we have 
\[
L_i^+\oplus L_i^- =  T_{C_\infty}\M_\sigma.
\]
So $L_2^-$ is $\R$. Thus in the first row of obstruction diagram \ref{equ:3O}, $L_1^- + L_2^- = \R$. Since $H^2(F(\hat{\mathsf{C}}_1))\oplus H^2(F(\hat{\mathsf{C}}_2)) = \R$, $\Delta_-^c $ is an isomorphism and ${\H}_r^-  \cong \ker\Delta_-^c=0$.
\end{proof}

\subsection{Global gluing theorem}\label{section:global-gluing}
We already have local gluing results. Now we can combine them to prove that, the moduli space of solutions of the new manifold is the fiber product of two old moduli spaces. 

We assume the following:

$\mathbf{A_1}$  $(N,g)$ is $\S^3$ or $\S^1\times \S^2$ with a positive scalar metric.

$\mathbf{A_2}$  $b_+(\hat{N}_1) > 0$, $b_+(\hat{N}_2)=0$.

$\mathbf{A_3}$  All the finite energy monopoles on $\hat{N}_1$ are irreducible and strongly regular.

$\mathbf{A_4}$  Any finite energy ${\hat{\sigma}_2}$-monople $\hat{\mathsf{C}}_2$ is reducible and $\dim_{\R} H^1_{\hat{\mathsf{C}}_2}$ is $0$ or $1$.

$\mathbf{A_5}$ The obstruction space $\mathcal{H}^-_r$ is $0$ when $r$ is large enough.

Recall that
\[
\hat{\mathcal{Z}}_{\Delta} := \{ (\hat{\mathsf{C}}_1,\hat{\mathsf{C}}_2)\in \hat{\mathcal{Z}}_1\times\hat{\mathcal{Z}}_2; \partial_\infty \hat{\mathsf{C}}_1 =  \partial_\infty \hat{\mathsf{C}}_2\}
\]
is the space of compatible monopoles, and $\widehat{\mathcal{G}}_i$ is the gauge group on $\hat{N}_i$. Define
\[
\widehat{\mathcal{G}}_\Delta := \{(\hat{\gamma}_1,\hat{\gamma}_2) \in \widehat{\mathcal{G}}_1\times\widehat{\mathcal{G}}_2 ; \partial_\infty\hat{\gamma}_1 =  \partial_\infty\hat{\gamma}_2\}.
\]

Let
\[
\hat{\mathfrak{N}}:= \hat{\mathcal{Z}}_{\Delta}  /\widehat{\mathcal{G}}_\Delta .
\]
The cutoff trick described before (see \ref{equ:cut} and \ref{equ:paste}) gives gluing maps
\[
\#_r: \widehat{\mathcal{G}}_\Delta\to \widehat{\mathcal{G}}_{\hat{N}_r}
\]
and
\[
\#_r: \hat{\mathcal{Z}}_{\Delta} \to \mathcal{C}_{\hat{N}_r}.
\]
The second one is  $(\widehat{\mathcal{G}}_\Delta , \widehat{\mathcal{G}}_{\hat{N}_r})$-equivariant, since these gluing maps share the same parameter $r$. So we can mod out by the $(\widehat{\mathcal{G}}_\Delta , \widehat{\mathcal{G}}_{\hat{N}_r})$-action, and get
\[
\hat{\#}_r: \hat{\mathfrak{N}} \to \hat{\mathcal{B}}_{\hat{N}_r}
\]
We also denote the image of this map by $\hat{\mathfrak{N}}$.

\begin{theorem}\label{thm:ggt}
Under assumptions ($\mathbf{A_1}$) - ($\mathbf{A_5}$), for large enough $r$, $\hat{\#}_r\hat{\mathfrak{N}}$ is isotopic to the moduli space of genuine monopoles ${\M}(\hat{N}_r)$ as submanifolds of $\hat{\mathcal{B}}_{\hat{N}_r}$.

\end{theorem}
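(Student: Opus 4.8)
The plan is to follow Nicolaescu's Section~4.6, with one essential change: rather than requiring each finite-energy monopole on $\hat{N}_2$ to be individually regular (which can fail, since $H^2(F(\hat{\mathsf{C}}_2))$ may be $1$-dimensional, as for $\hat{N}_2 = \S^2 \times D^2$), I would feed assumption $(\mathbf{A_5})$ directly into the local gluing theorem. Concretely: for $r$ large, $(\mathbf{A_5})$ gives $\H_r^- = 0$, and since the Kuranishi obstruction map $\kappa_r$ of Theorem~\ref{thm:lgt} takes values in $\H_r^-$, it vanishes identically. Hence for every compatible pair $(\hat{\mathsf{C}}_1,\hat{\mathsf{C}}_2) \in \hat{\mathcal{Z}}_\Delta$, writing $\hat{\mathsf{C}}_r = \hat{\mathsf{C}}_1 \#_r \hat{\mathsf{C}}_2$, the genuine monopoles in the slice of $\hat{\mathsf{C}}_r$ within distance $r^{-3}$ form the smooth graph $\{\hat{\mathsf{C}}_r + \underline{\hat{\mathsf{C}}}_0 \oplus \Phi(\underline{\hat{\mathsf{C}}}_0) : \underline{\hat{\mathsf{C}}}_0 \in B_0(r^{-3}) \subset \H_r^+\}$, and by Theorem~\ref{thm:lgc} its image is an open set $U_{[\hat{\mathsf{C}}_1,\hat{\mathsf{C}}_2]}$ of $\M(\hat{N}_r)$ diffeomorphic to the ball $B_0(r^{-3}) \subset \H_r^+$; in particular $\M(\hat{N}_r)$ is a smooth compact manifold of dimension $\dim\H_r^+$.

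Next I would match dimensions. The left column of the virtual tangent space diagram \ref{equ:3T} reads $0 \to \ker\Delta_+^c \xrightarrow{S_r} \H_r^+ \to \ker\Delta_+^0 \to 0$, and by $(\mathbf{A_2})$--$(\mathbf{A_4})$ together with Remark~\ref{rem:imL(kerLinfty)} one has $\mathfrak{C}_1^+ \cong \R$ ($\hat{\mathsf{C}}_1$ irreducible) and $\mathfrak{C}_2^+ = 0$ ($\hat{\mathsf{C}}_2$ reducible), so $\Delta_+^0$ is an isomorphism and $\ker\Delta_+^0 = 0$; thus $S_r$ is an isomorphism $\ker\Delta_+^c \xrightarrow{\sim} \H_r^+$. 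Since $\ker\Delta_+^c = T_{[\hat{\mathsf{C}}_1,\hat{\mathsf{C}}_2]}\hat{\mathfrak{N}}$ (top row of \ref{equ:3T}) and $\H_r^+ = T\M(\hat{N}_r)$, the spaces $\hat{\mathfrak{N}}$ and $\M(\hat{N}_r)$ have equal dimension and $d\hat{\#}_r = S_r$ is injective; as $\hat{\mathfrak{N}}$ is compact (a fibre product of compact moduli spaces) and $\hat{\#}_r$ is injective for $r$ large, $\hat{\#}_r$ is an embedding.

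Then I would build and analyze the comparison map. Define $\Xi_r : \hat{\mathfrak{N}} \to \M(\hat{N}_r)$ by sending $[\hat{\mathsf{C}}_1,\hat{\mathsf{C}}_2]$ to the genuine monopole $[\hat{\mathsf{C}}_r + \Phi(0)]$, i.e.\ the point $\underline{\hat{\mathsf{C}}}_0 = 0$ of $U_{[\hat{\mathsf{C}}_1,\hat{\mathsf{C}}_2]}$; since $\hat{\mathsf{C}}_r$ is an approximate monopole with exponentially small error, $\|\Phi(0)\|_{2,2} = O(\e^{-\delta r})$, so $\Xi_r$ is gauge-equivariant, smooth, and $C^1$-close to $\hat{\#}_r$; in particular $d\Xi_r$ is close to $S_r$ and hence an isomorphism, so $\Xi_r$ is a local diffeomorphism. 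Surjectivity of $\Xi_r$ is Theorem~\ref{thm:ls}: the charts $U_{[\hat{\mathsf{C}}_1,\hat{\mathsf{C}}_2]}$ cover $\M(\hat{N}_r)$, and the isomorphism $S_r$ lets one absorb the slice parameter $\underline{\hat{\mathsf{C}}}_0$ into a motion of $[\hat{\mathsf{C}}_1,\hat{\mathsf{C}}_2]$ within $\hat{\mathfrak{N}}$, so each monopole on $\hat{N}_r$ is $\Xi_r$ of a nearby pair. Injectivity is the step I expect to be the \emph{main obstacle}: locally, two distinct configurations in one slice are gauge-inequivalent by Lemma~4.5.9 of \cite{Nicolaescu2000NotesOS}, but to rule out that two far-apart points of $\hat{\mathfrak{N}}$ glue to gauge-equivalent monopoles I would argue by contradiction, taking $r_n \to \infty$ and pairs $[\hat{\mathsf{C}}_1^n,\hat{\mathsf{C}}_2^n] \neq [\hat{\mathsf{D}}_1^n,\hat{\mathsf{D}}_2^n]$ with $\Xi_{r_n}$ equal, and passing to subsequential limits in the compact $\hat{\mathfrak{N}}$; distinct limits produce monopoles that are eventually separated in $\hat{\mathcal{B}}_{\hat{N}_{r_n}}$ by the exponential-decay estimates controlling the gluing, while coincident limits place both pairs in a single local gluing chart and contradict Lemma~4.5.9. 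Hence $\Xi_r : \hat{\mathfrak{N}} \xrightarrow{\sim} \M(\hat{N}_r)$ is a diffeomorphism.

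Finally I would produce the isotopy: the straight-line family $H_t([\hat{\mathsf{C}}_1,\hat{\mathsf{C}}_2]) := [\hat{\mathsf{C}}_r + t\,\Phi(0)]$, $t \in [0,1]$, joins $\hat{\#}_r\hat{\mathfrak{N}}$ to $\M(\hat{N}_r)$ inside $\hat{\mathcal{B}}_{\hat{N}_r}$ through embeddings (each $dH_t$ is $C^0$-close to the injective $d\hat{\#}_r$), so $H_t$ is an isotopy of $\hat{\mathfrak{N}}$ into $\hat{\mathcal{B}}_{\hat{N}_r}$; by the isotopy extension theorem for compact finite-dimensional submanifolds of the Hilbert manifold $\hat{\mathcal{B}}_{\hat{N}_r}$ it extends to an ambient isotopy, showing that $\hat{\#}_r\hat{\mathfrak{N}}$ and $\M(\hat{N}_r)$ are isotopic as submanifolds, as required.
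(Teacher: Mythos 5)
Your proposal follows the same overall strategy as the paper: use $(\mathbf{A_5})$ to kill the Kuranishi map $\kappa_r$, identify $\H_r^+$ with $T\hat{\mathfrak{N}}$ via the left column of the virtual tangent space diagram (using $\mathfrak{C}_2^+ = 0$ for the reducible $\hat{\mathsf{C}}_2$ to conclude $\ker\Delta_+^0 = 0$), realize $\M(\hat{N}_r)$ locally as the graph of $\Phi$ over a ball in $\H_r^+$, and then deform the graph to the zero section by a straight-line isotopy, with local surjectivity (Theorem \ref{thm:ls}) supplying global coverage. The two accounts differ mainly in emphasis: the paper phrases the final step in bundle language ($\H_r^+$ forms $T\hat{\mathfrak{N}}$, $\mathcal{Y}_r^+$ the normal bundle, $\M(\hat{N}_r)$ a local section of it) and then simply declares the agreement "globally true by Theorem \ref{thm:ls}", while you make the comparison map $\Xi_r$ explicit and attempt to verify it is a global diffeomorphism directly. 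Your flagging of global injectivity as the genuine gap to worry about is well-placed; your compactness-and-subsequences sketch for it is plausible but not fully rigorous (the ambient space $\hat{\mathcal{B}}_{\hat{N}_{r_n}}$ changes with $n$, so "eventually separated" needs the uniform estimates underlying Nicolaescu's capture lemma rather than a naive limiting argument), yet the paper itself handles this point with comparable terseness, deferring to the local surjectivity theorem. A small notational slip: you label the vertical arrow of the left column of diagram \ref{equ:3T} by $S_r$, whereas in the paper $S_r$ denotes the horizontal (row) gluing maps; the short exact sequence of the column and the resulting identification $\H_r^+ \cong^a \ker\Delta_+^c$ are nevertheless exactly what the paper records as (\ref{equ:H+=kerDelta}), so this is cosmetic. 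You also derive $\mathfrak{C}_2^+ = 0$ from reducibility of $\hat{\mathsf{C}}_2$ and the stabilizer description, whereas the paper routes through the Lagrangian complementarity $\mathfrak{C}_2^+ \oplus \mathfrak{C}_2^- = T_1 G_\infty$ after noting $\mathfrak{C}_2^- = \R$; both are valid and equivalent.
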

\begin{proof}
For any point $ (\hat{\mathsf{C}}_1,\hat{\mathsf{C}}_2)$ in $\hat{\mathcal{Z}}_{\Delta}$, let 
\[
\mathsf{C}_r = \#_r  (\hat{\mathsf{C}}_1,\hat{\mathsf{C}}_2) = \hat{\mathsf{C}}_1 \#_r \hat{\mathsf{C}}_2.
\]
By assumption $\mathbf{A_1}$, all monopoles on $N$ are reducible. Thus $T_1G_\infty = \R$. By assumption $\mathbf{A_4}$, $\mathfrak{C}_2^- = \R$, so that $\mathfrak{C}_2^+ = 0$. Hence $\Delta^0_+$ must be an isomorphism in the last row of diagram \ref{equ:3T}. So 
\begin{equation}\label{equ:H+=kerDelta}
\H_r^+ \cong^a \ker\Delta_+^c,
\end{equation}
where $ \cong^a$ means that the isomorphism is given by an asymptotic map in the sense of \cite{Nicolaescu2000NotesOS} page 301.

Now we want to show 
\begin{equation}\label{equ:kerDelta=Tangent}
\ker\Delta_+^c \cong T_{[\mathsf{C}_r]}\hat{\mathfrak{N}}.
\end{equation}
By the definition of $H^1_{\hat{\mathsf{C}}_i}$ and boundary difference map $\Delta_+^c $, a point in $\ker\Delta_+^c $ is a pair $(\underline{\hat{\mathsf C}}_1, \underline{\hat{\mathsf C }}_2)\in \mathcal{S}_{\hat{\mathsf{C}}_1} \times \mathcal{S}_{\hat{\mathsf{C}}_2}$ in the local slice of monopoles, such that $\partial_\infty \underline {\hat{\mathsf{C}}}_1 =  \partial_\infty \underline{\hat{\mathsf{C}}}_2$. 
On the other hand, any point of $T_{[\mathsf{C}_r]}\hat{\mathfrak{N}}$ can be represented by $(\hat{\gamma}_1\underline{\hat{\mathsf C}}_1, \hat{\gamma}_2\underline{\hat{\mathsf C }}_2)\in T\hat{\mathcal{Z}}_{\Delta}$ for  $(\underline{\hat{\mathsf C}}_1, \underline{\hat{\mathsf C }}_2)\in\ker\Delta_+^c$ and $(\hat{\gamma}_1,\hat{\gamma}_2) \in \widehat{\mathcal{G}}_1 \times\widehat{\mathcal{G}}_2$, by the definition of slice. Since $\underline{\hat{\mathsf C}}_1$ and $\underline{\hat{\mathsf C}}_2$ have the same boundary value, and $(\hat{\gamma}_1\underline{\hat{\mathsf C}}_1, \hat{\gamma}_2\underline{\hat{\mathsf C }}_2) \in T\hat{\mathcal{Z}}_{\Delta}$, $\hat{\gamma}_1$ and $\hat{\gamma}_2$ must coincide on the boudary. Thus $(\hat{\gamma}_1,\hat{\gamma}_2) \in T\widehat{\mathcal{G}}_\Delta$. Therefore, $\ker\Delta_+^c \cong T_{[\mathsf{C}_r]}\hat{\mathfrak{N}}$.

By (\ref{equ:H+=kerDelta}) and (\ref{equ:H+=kerDelta}), the family of $\H_r^+$ indexed by $\mathsf{C}_r$ forms the tangent bundle of $\hat{\mathfrak{N}}$ when $r$ is sufficiently large. We again denote it by $\H_r^+$. By the definition of $\mathcal{Y}_r^+$, it's the normal bundle of $\hat{\mathfrak{N}}$ in $\hat{\mathcal{B}}_{\hat{N}_r}$. By condition $\mathbf{A_5}$, the map $\kappa_r$ in theorem \ref{thm:lgc} must be zero. We conclude that ${\M}(\hat{N}_r) $ is a section of the normal bundle of $ \hat{\mathfrak{N}}$ locally. Thus for each $\mathsf{C}_r$, there exists an open neighborhood $U_r$, such that ${\M}(\hat{N}_r) \cap U_r \cong  \hat{\mathfrak{N}}\cap U_r $. By theorem \ref{thm:ls}, this fact is globally true.
\end{proof}

Now we can show that $\hat{\mathfrak{N}}$ above is desired fiber product of moduli space.
\begin{lemma}\label{lem:fibProd}
Let $\mathcal{Z}$ be monopoles on $N$. Define
\[
\mathcal{G}^{\partial_\infty}:= \partial_\infty\widehat{\mathcal{G}}_1 \cdot  \partial_\infty\widehat{\mathcal{G}}_2,
\]
\[
\M^{\partial_\infty} := \mathcal{Z}/\mathcal{G}^{\partial_\infty},
\]
\[
\hat{\mathcal{Z}} := \{ (\hat{\mathsf{C}}_1,\hat{\mathsf{C}}_2)\in \hat{\mathcal{Z}}_1\times\hat{\mathcal{Z}}_2; \partial_\infty \hat{\mathsf{C}}_1 \equiv  \partial_\infty \hat{\mathsf{C}}_2 \mod \mathcal{G}^{\partial_\infty} \}.
\]
Then we have
\[
\hat{\mathcal Z}/\widehat{\mathcal{G}}_1\times\widehat{\mathcal{G}}_2 = 
 \{ ([\hat{\mathsf{C}}_1],[\hat{\mathsf{C}}_2])\in \hat{\M}_1\times\hat{\M}_2; \partial_\infty [\hat{\mathsf{C}}_1] =  \partial_\infty [\hat{\mathsf{C}}_2] \in \M^{\partial_\infty}\}
\]
and
\[
\hat{\mathcal Z}/\widehat{\mathcal{G}}_1\times\widehat{\mathcal{G}}_2 \cong \hat{\mathfrak{N}}.
\]
\end{lemma}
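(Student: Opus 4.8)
The plan is to treat the two displayed identities separately: the first is a direct unwinding of definitions, while the second requires a short gauge-fixing argument, and it is there that the hypotheses $\mathbf{A_1}$ and $\mathbf{A_4}$ are used. Throughout one uses that the gauge groups are abelian and that $N$ is connected.

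\textbf{The first identity.} Because $\partial_\infty\widehat{\mathcal{G}}_i\subset\mathcal{G}^{\partial_\infty}$ by the very definition of $\mathcal{G}^{\partial_\infty}$, the asymptotic map descends to $\partial_\infty\colon\hat{\M}_i\to\M^{\partial_\infty}$, so the right-hand set is meaningful. I would then record three observations: $\hat{\mathcal{Z}}\subset\hat{\mathcal{Z}}_1\times\hat{\mathcal{Z}}_2$ is invariant under $\widehat{\mathcal{G}}_1\times\widehat{\mathcal{G}}_2$; a pair of monopoles $(\hat{\mathsf{C}}_1,\hat{\mathsf{C}}_2)$ lies in $\hat{\mathcal{Z}}$ precisely when $\partial_\infty[\hat{\mathsf{C}}_1]=\partial_\infty[\hat{\mathsf{C}}_2]$ in $\M^{\partial_\infty}$, a condition on the gauge classes alone; and the fibres of $\hat{\mathcal{Z}}_1\times\hat{\mathcal{Z}}_2\to\hat{\M}_1\times\hat{\M}_2$ are exactly the $\widehat{\mathcal{G}}_1\times\widehat{\mathcal{G}}_2$-orbits. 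The first identity is immediate from these.

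\textbf{The second identity.} The inclusions $\hat{\mathcal{Z}}_\Delta\hookrightarrow\hat{\mathcal{Z}}$ (literal equality of asymptotic values is stronger than equality modulo $\mathcal{G}^{\partial_\infty}$) and $\widehat{\mathcal{G}}_\Delta\subset\widehat{\mathcal{G}}_1\times\widehat{\mathcal{G}}_2$ induce a natural map $\iota\colon\hat{\mathfrak{N}}=\hat{\mathcal{Z}}_\Delta/\widehat{\mathcal{G}}_\Delta\to\hat{\mathcal{Z}}/(\widehat{\mathcal{G}}_1\times\widehat{\mathcal{G}}_2)$, and the claim amounts to showing $\iota$ is a bijection. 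Surjectivity is a ``diagonalization'': given $(\hat{\mathsf{C}}_1,\hat{\mathsf{C}}_2)\in\hat{\mathcal{Z}}$, use $\mathcal{G}^{\partial_\infty}=\partial_\infty\widehat{\mathcal{G}}_1\cdot\partial_\infty\widehat{\mathcal{G}}_2$ to write $\partial_\infty\hat{\mathsf{C}}_1=(\partial_\infty\hat\gamma_1)(\partial_\infty\hat\gamma_2)\cdot\partial_\infty\hat{\mathsf{C}}_2$ with $\hat\gamma_i\in\widehat{\mathcal{G}}_i$; then $(\hat\gamma_1^{-1}\hat{\mathsf{C}}_1,\hat\gamma_2\hat{\mathsf{C}}_2)\in\hat{\mathcal{Z}}_\Delta$ is $\widehat{\mathcal{G}}_1\times\widehat{\mathcal{G}}_2$-equivalent to $(\hat{\mathsf{C}}_1,\hat{\mathsf{C}}_2)$. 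For injectivity, suppose $(\hat{\mathsf{C}}_1,\hat{\mathsf{C}}_2)$ and $(\hat{\mathsf{D}}_1,\hat{\mathsf{D}}_2)$ in $\hat{\mathcal{Z}}_\Delta$ satisfy $\hat{\mathsf{D}}_i=\hat\gamma_i\hat{\mathsf{C}}_i$ with $\hat\gamma_i\in\widehat{\mathcal{G}}_i$; equating the two (equal) pairs of boundary values shows $s:=(\partial_\infty\hat\gamma_1)(\partial_\infty\hat\gamma_2)^{-1}$ lies in $\Stab(\mathsf{C}_\infty)$, where $\mathsf{C}_\infty:=\partial_\infty\hat{\mathsf{C}}_1$ is a monopole on $N$.

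The main obstacle — the only non-formal step — is to dispose of this $s$, since $\hat\gamma_1$ and $\hat\gamma_2$ need not individually preserve the diagonal. Here the hypotheses enter. By $\mathbf{A_1}$, $N$ is connected and carries a positive scalar curvature metric, hence $\mathsf{C}_\infty$ is reducible and $\Stab(\mathsf{C}_\infty)$ in the gauge group of $N$ consists only of constant maps, so $s=c_0\in\S^1$ is constant. By $\mathbf{A_4}$, the monopole $\hat{\mathsf{D}}_2$ on $\hat N_2$ is reducible, so the constant gauge transformation $c_0\in\widehat{\mathcal{G}}_2$ fixes $\hat{\mathsf{D}}_2$; replacing $\hat\gamma_2$ by $c_0\hat\gamma_2$ leaves $\hat\gamma_2\hat{\mathsf{C}}_2=\hat{\mathsf{D}}_2$ and $\hat\gamma_1\hat{\mathsf{C}}_1=\hat{\mathsf{D}}_1$ unchanged while forcing $\partial_\infty(c_0\hat\gamma_2)=\partial_\infty\hat\gamma_1$, so $(\hat\gamma_1,c_0\hat\gamma_2)\in\widehat{\mathcal{G}}_\Delta$ carries $(\hat{\mathsf{C}}_1,\hat{\mathsf{C}}_2)$ to $(\hat{\mathsf{D}}_1,\hat{\mathsf{D}}_2)$. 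Thus $\iota$ is a bijection; combined with the first identity this gives both displayed equalities. All of this is at the level of topological spaces, the submanifold structure on $\hat{\mathfrak{N}}$ being the one furnished by Theorem~\ref{thm:ggt}.
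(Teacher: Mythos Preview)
Your argument is correct and follows the same route as the paper: the first identity is unwound from the definitions, and for the second you set up the natural map $\iota\colon\hat{\mathfrak{N}}\to\hat{\mathcal{Z}}/(\widehat{\mathcal{G}}_1\times\widehat{\mathcal{G}}_2)$ and check bijectivity, with surjectivity given by exactly the diagonalization argument the paper uses. You are in fact more careful than the paper on injectivity: the paper simply asserts that ``$\hat{\mathfrak{N}}$ is certainly a subset of $\hat{\mathcal{Z}}/(\widehat{\mathcal{G}}_1\times\widehat{\mathcal{G}}_2)$'' without justification, whereas your stabilizer argument correctly identifies that the discrepancy $s=(\partial_\infty\hat\gamma_1)(\partial_\infty\hat\gamma_2)^{-1}$ must be absorbed, and that this genuinely uses $\mathbf{A_1}$ (so $s$ is a constant) together with $\mathbf{A_4}$ (so the constant fixes $\hat{\mathsf{D}}_2$). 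Without these hypotheses injectivity can fail---for instance, if both $\hat{\mathsf{C}}_i$ were irreducible but $\mathsf{C}_\infty$ reducible, then $(\hat{\mathsf{C}}_1,\hat{\mathsf{C}}_2)$ and $(\hat{\mathsf{C}}_1,c_0\hat{\mathsf{C}}_2)$ with $c_0\neq 1$ would be distinct in $\hat{\mathfrak{N}}$ but equal in $\hat{\mathcal{Z}}/(\widehat{\mathcal{G}}_1\times\widehat{\mathcal{G}}_2)$---so your extra care is warranted.
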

\begin{proof}
The first equility is just by definition. We prove the second one:

$\hat{\mathfrak{N}}$ is certainly a subset of $\hat{\mathcal Z}/\widehat{\mathcal{G}}_1\times\widehat{\mathcal{G}}_2$. For any $([\hat{\mathsf{C}}_1],[\hat{\mathsf{C}}_2])$ in $\hat{\mathcal Z}/\widehat{\mathcal{G}}_1\times\widehat{\mathcal{G}}_2$, suppose it's represented by $(\hat{\mathsf{C}}_1,\hat{\mathsf{C}}_2)\in \hat{\mathcal{Z}}$. Then there exists $g\in \mathcal{G}^{\partial_\infty}$ such that $g\cdot \partial_\infty \hat{\mathsf{C}}_1 =  \partial_\infty \hat{\mathsf{C}}_2$. Suppose $g=\partial_\infty g_1\cdot \partial_\infty g_2$, where $g_i\in \widehat{\mathcal{G}}_i$. Now $([\hat{\mathsf{C}}_1],[\hat{\mathsf{C}}_2])=([g_1\cdot\hat{\mathsf{C}}_1],[g_2^{-1}\cdot\hat{\mathsf{C}}_2])\in \hat{\mathcal Z}/\widehat{\mathcal{G}}_1\times\widehat{\mathcal{G}}_2$ and $(g_1\cdot\hat{\mathsf{C}}_1,g_2^{-1}\cdot\hat{\mathsf{C}}_2)\in \hat{\mathcal{Z}}_\Delta$. So $\hat{\mathcal Z}/\widehat{\mathcal{G}}_1\times\widehat{\mathcal{G}}_2 \subset \hat{\mathfrak{N}}$.
\end{proof}

\begin{corollary}\label{fiberProduct}
\begin{align}
\M(X)&\cong\M(X_0) \times_{\M(\S^1\times \S^2)} \M(\S^1\times D^3)\\
\M(X')&\cong\M(X_0) \times_{\M(\S^1\times \S^2)} \M(D^2\times \S^2)
\end{align}
\end{corollary}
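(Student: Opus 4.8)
The plan is to apply the global gluing theorem, Theorem~\ref{thm:ggt}, twice: to the decompositions
\[
X = X_0 \cup_{\S^1\times\S^2}(\S^1\times D^3), \qquad X' = X_0 \cup_{\S^1\times\S^2}(D^2\times\S^2),
\]
in each case with $\hat{N}_1 = X_0$, $N = \S^1\times\S^2$, and $\hat{N}_2$ the filling handle, all equipped with the bullet / product positive scalar curvature metrics of Section~\ref{subsection:PSCmetric} and with a generic perturbation supported away from the neck (Proposition~\ref{prop:noncompactTransversality}). The work is to verify the hypotheses $\mathbf{A_1}$--$\mathbf{A_5}$ in both cases and then to unwind the conclusion $\M(\hat{N}_r)\cong\hat{\#}_r\hat{\mathfrak{N}}$ of Theorem~\ref{thm:ggt} into the asserted fiber product using Lemma~\ref{lem:fibProd}.

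First I would dispatch $\mathbf{A_1}$--$\mathbf{A_5}$. Hypothesis $\mathbf{A_1}$ holds because $N = \S^1\times\S^2$ carries the product PSC metric. For $\mathbf{A_2}$, $b_+(X_0) = b_+(X) > 0$ (from $i^*\colon H^2(X)\xrightarrow{\cong}H^2(X_0)$ and $\sigma(X_0) = \sigma(X)$ as in the proof of Proposition~\ref{prop:virtualDimX0}, together with the standing assumption $b_+(X_0)>0$ of Proposition~\ref{prop:triObs}), while $b_+(\S^1\times D^3) = b_+(D^2\times\S^2) = 0$. Hypothesis $\mathbf{A_3}$ is Proposition~\ref{prop:noncompactTransversality}: for the chosen perturbation every finite-energy monopole on $X_0$ is irreducible and strongly regular. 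Hypothesis $\mathbf{A_4}$ is Corollary~\ref{cor:dimOfReducible}: every finite-energy monopole on $\S^1\times D^3$ (resp.\ $D^2\times\S^2$) is reducible, and the cylindrical moduli space is $\S^1$ (resp.\ a point), so $\dim_\R H^1_{\hat{\mathsf{C}}_2}\in\{0,1\}$. For $\mathbf{A_5}$ the two cases diverge. When $\hat{N}_2 = \S^1\times D^3$ we have $L^2_{top} = \im\bigl(H^2(\S^1\times D^3)\to H^2(\S^1\times\S^2)\bigr) = 0$, so by~(\ref{equ:obstructionF=kerexD*}) and Proposition~\ref{prop:triObs} already $H^2(F(\hat{\mathsf{C}}_2)) = 0$; combined with $H^2(F(\hat{\mathsf{C}}_1)) = 0$ from strong regularity, the first row of the obstruction diagram~(\ref{equ:3O}) forces $\ker\Delta_-^c = 0$, and the column argument of Proposition~\ref{prop:trivialObs} then gives $\mathcal H^-_r = 0$. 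When $\hat{N}_2 = D^2\times\S^2$ one has $L^2_{top}\cong\R$ and $H^2(F(\hat{\mathsf{C}}_2))\cong\R\neq0$, but the vanishing $\mathcal H^-_r = 0$ is exactly the content of Proposition~\ref{prop:trivialObs}.

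With the hypotheses checked, Theorem~\ref{thm:ggt} shows that for large $r$ the moduli space $\M(\hat{N}_r)$ is isotopic in $\hat{\mathcal{B}}_{\hat{N}_r}$ to $\hat{\#}_r\hat{\mathfrak{N}}$; since $\hat{N}_r$ is diffeomorphic to $X$ (resp.\ $X'$) and the PSC caps introduce no new reducibles or wall-crossing as the neck lengthens, $\M(\hat{N}_r)$ is the Seiberg--Witten moduli space of $X$ (resp.\ $X'$) for the corresponding stretched metric and perturbation. By Lemma~\ref{lem:fibProd}, $\hat{\mathfrak{N}}$ is identified with $\{([\hat{\mathsf{C}}_1],[\hat{\mathsf{C}}_2])\in\hat{\M}_1\times\hat{\M}_2 : \partial_\infty[\hat{\mathsf{C}}_1] = \partial_\infty[\hat{\mathsf{C}}_2]\in\M^{\partial_\infty}\}$, with $\hat{\M}_i = \M(\hat{N}_i)$ and $\M^{\partial_\infty} = \mathcal{Z}/(\partial_\infty\widehat{\mathcal{G}}_1\cdot\partial_\infty\widehat{\mathcal{G}}_2)$. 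To conclude I would note that $\M^{\partial_\infty}$ coincides with $\M(\S^1\times\S^2)$: the identity component of the gauge group of $\S^1\times\S^2$ always extends over either side, and $\pi_0$ of that gauge group is $H^1(\S^1\times\S^2;\Z)=\Z$, a generator of which pulls back over $\S^1\times D^3$ from the $\S^1$ factor and also extends over $X_0$ because $H^1(X_0;\Z)\to H^1(\S^1\times\S^2;\Z)$ is onto (the splitting used in Theorem~\ref{thm:changeOfSpinc}). Hence $\hat{\mathfrak{N}} = \M(X_0)\times_{\M(\S^1\times\S^2)}\M(\S^1\times D^3)$, and in the same way with $D^2\times\S^2$ in place of $\S^1\times D^3$, which is the claim.

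The step I expect to be the real obstacle is not internal to this corollary but is $\mathbf{A_5}$ in the $X'$ case, where $H^2(F(\hat{\mathsf{C}}_2))$ is genuinely one-dimensional and the vanishing of $\mathcal H^-_r$ has to be teased out of the asymptotically exact obstruction diagram~(\ref{equ:3O}); that is already handled by Proposition~\ref{prop:trivialObs}. Within the present argument the only point demanding care is the last one, the identification $\M^{\partial_\infty} = \M(\S^1\times\S^2)$: were some component of the gauge group on the separating $\S^1\times\S^2$ to fail to extend over $X_0$ or over the handle, the fiber product would have to be formed over the quotient $\M^{\partial_\infty}$ rather than over the honest Seiberg--Witten moduli space of $\S^1\times\S^2$.
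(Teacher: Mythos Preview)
Your proposal is correct and follows essentially the same route as the paper: verify $\mathbf{A_1}$--$\mathbf{A_5}$ via Propositions~\ref{prop:triObs} and~\ref{prop:trivialObs}, apply Theorem~\ref{thm:ggt}, then use Lemma~\ref{lem:fibProd} and the surjectivity of $H^1(X_0)\to H^1(\S^1\times\S^2)$ to identify $\M^{\partial_\infty}$ with $\M(\S^1\times\S^2)$. You supply more detail than the paper in checking $\mathbf{A_5}$ for the $\S^1\times D^3$ case (the paper just cites Propositions~\ref{prop:triObs} and~\ref{prop:trivialObs} without spelling out the obstruction-diagram step), and your observation that only $\partial_\infty\widehat{\mathcal G}_1=\mathcal G$ is needed (so the handle side is irrelevant) matches the paper's one-line justification.
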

\begin{proof}
By Proposition \ref{prop:triObs} and \ref{prop:trivialObs}, all assumptions of Theorem \ref{thm:ggt} are satisfied. Thus $\M(X)\cong \hat{\mathfrak{N}}$. By Lemma \ref{lem:fibProd},
\[
\M(X)\cong\M(X_0) \times_{\M^{\partial_\infty}(\S^1\times \S^2)} \M(\S^1\times D^3).
\]
But in our case, $H^1(X_0)\to H^1(\S^1\times \S^2)$ is surjective. Thus $\partial_\infty\widehat{\mathcal{G}}_1 =  \mathcal{G}$. Therefore $\M^{\partial_\infty}(\S^1\times \S^2) = \M(\S^1\times \S^2)$. 

The proof of the second equation is similar.
\end{proof}

\subsection{The proof of $1$-surgery formula}\label{subsection:proofUnparemetrized}
Now we can investigate Seiberg-Witten invariants of $X$ and $X'$. According to section 2.2 of \cite{LL01}, for higher dimensional moduli space ${\M}(\hat{N}_r)$, given an integral cohomology class $\Theta$ of moduli space $\hat{\mathcal{B}}_{\hat{N}_r}$, the Seiberg-Witten invariant associate to this class is 
\[
SW^{ \Theta}(\hat{N}_r, \ss) := \langle\Theta,[ {\M}(\hat{N}_r,\ss) ] \rangle
\]

Since $H^1(X)= H^1(X_0) = \R$, $\hat{\mathcal{B}}_X\cong \hat{\mathcal{B}}_{X_0}\cong  \C P^\infty_+\times \S^1$. We choose $\Theta$ to be a generator of $H^1( \C P^\infty_+\times \S^1,\Z)$. 

We first show that the invariant $SW^{ \Theta}$ is well defined:
\begin{lemma}\label{lem:detect-exotic}
Suppose that $b^+(X)>1$ and that $f:X\to X$ is a diffeomorphism. Let $h$ and $k$ be generic paramters. Then $SW^\Theta(E_X, \ss, h) = SW^\Theta(E_X, \ss, k)$.
\end{lemma}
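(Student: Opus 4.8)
The plan is to run the standard cobordism argument for Seiberg--Witten invariants, adapted to the odd-dimensional moduli space and the cut-down class $\Theta$. First I would join the two generic parameters $h$ and $k$ by a smooth path $\{p_t\}_{t\in[0,1]}$ with $p_0=h$, $p_1=k$, chosen so that the associated universal moduli space is cut out transversally. Since $b^+(X)>1$, the reducible wall in the space of parameters has codimension $b^+\ge 2$, so a generic path avoids it entirely; hence every monopole along the path is irreducible, and the parametrized moduli space
\[
\mathcal{M}_{[h,k]}:=\{(t,\mathsf{C}): t\in[0,1],\ \mathsf{C}\in\M(X,\ss,p_t)\}
\]
is a smooth compact manifold with boundary. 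By the index computation in (\ref{equ:dimAssumption}), augmented by the extra parameter, it has dimension $2$ (in general $\dim\M(X,\ss)+1$); it is compact by the usual Seiberg--Witten a priori estimate; and a homology orientation of $X$ orients it compatibly with its two ends, so that $\partial\mathcal{M}_{[h,k]}=\M(X,\ss,h)\sqcup\bigl(-\M(X,\ss,k)\bigr)$ as oriented $1$-manifolds.

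Next I would use that $\Theta$ is the pullback of a generator of $H^1(\S^1;\Z)$ under the projection $\hat{\mathcal{B}}_X\simeq\C P^\infty_+\times\S^1\to\S^1$, hence an honest cohomology class on the configuration space. The forgetful map $\mathcal{M}_{[h,k]}\to\hat{\mathcal{B}}_X$ then exhibits $[\M(X,\ss,h)]$ and $[\M(X,\ss,k)]$ as homologous $1$-cycles in $\hat{\mathcal{B}}_X$, and therefore
\[
SW^\Theta(X,\ss,h)-SW^\Theta(X,\ss,k)=\bigl\langle\Theta,\,[\M(X,\ss,h)]-[\M(X,\ss,k)]\bigr\rangle=0.
\]
Concretely: each boundary circle maps to $\S^1$ with some degree, and across the surface $\mathcal{M}_{[h,k]}$ these degrees must sum to zero. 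For a moduli space of dimension $2n+1$ with $n>0$ one runs the same argument with $\Theta$ replaced by $\Theta\cup c_1(\C P^\infty)^n$, which is again closed, so no new idea is needed.

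This also records that $SW^\Theta$ is a diffeomorphism invariant, which is what lets it detect exotic smooth structures: a diffeomorphism $f\colon X\to X$ induces a homeomorphism of configuration spaces carrying $\M(X,\ss,h)$ onto $\M(X,f^*\ss,f^*h)$, identifies $\Theta$ with $\pm\Theta$ according to the action of $f$ on $H^1(X;\Z)=\Z$, and may reverse the homology orientation; combined with the parameter-independence just proved this gives $SW^\Theta(X,\ss)=\pm SW^\Theta(X,f^*\ss)$, so a nonzero value of the invariant obstructs $X$ from being diffeomorphic to any homeomorphic manifold on which the invariant behaves differently.

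I do not expect a serious obstacle here: Seiberg--Witten moduli spaces are automatically compact (the Weitzenb{\"o}ck a priori bound on the spinor leaves no room for bubbling), so the only genuine inputs are the wall-avoidance --- which is exactly where the hypothesis $b^+(X)>1$ is used, and which fails when $b^+=1$ --- and the orientability and coherent orientation of the cobordism, both of which are standard. The one point that deserves a careful sentence is that $\Theta$ really is defined on all of $\hat{\mathcal{B}}_X$ (not merely on a proper stratum), so that pairing it against cobordant $1$-cycles is legitimate; for this I would invoke the description of $\hat{\mathcal{B}}_X$ as a $\C P^\infty$-bundle over $\S^1$ from the preceding discussion together with the treatment of cut-down invariants in \cite{LL01}.
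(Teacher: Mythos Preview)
Your proposal is correct and follows essentially the same approach as the paper: both pick a generic path of parameters (using $b^+(X)>1$ to avoid the reducible wall), obtain a compact oriented $2$-dimensional parametrized moduli space as a cobordism between $\M(X,\ss,h)$ and $\M(X,\ss,k)$, and then pair with $\Theta$. The only cosmetic difference is that the paper phrases the last step as ``cutting'' the cobordism by $\Theta$ to get a $1$-dimensional cobordism between the cut-down moduli spaces, whereas you phrase it as the two boundary $1$-cycles being homologous in $\hat{\mathcal{B}}_X$; these are equivalent formulations of the same argument.
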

\begin{proof}
Since $b^+(X)>1$, by a generic argement (similar to the one in the proof of \ref{prop:noncompactTransversality-general}), there exists a generic path $K$ from $h$ to $k$. Hence there exists a cobordism from $\M(E_X,\ss,h)$ to $\M(E_X,\ss,k)$. This cobordism is a $2$-dimensional manifold with $1$-dimensional boundary, so after cutting it by the class $\Theta$, we obtain a $1$-dimensional cobordism which gives $SW^\Theta(E_X, \ss, h) = SW^\Theta(E_X, \ss, k)$ (see Figure \ref{fig:infinite-cobordism}).
\begin{figure}[ht!]
    \begin{Overpic}{\includegraphics[scale=0.6]{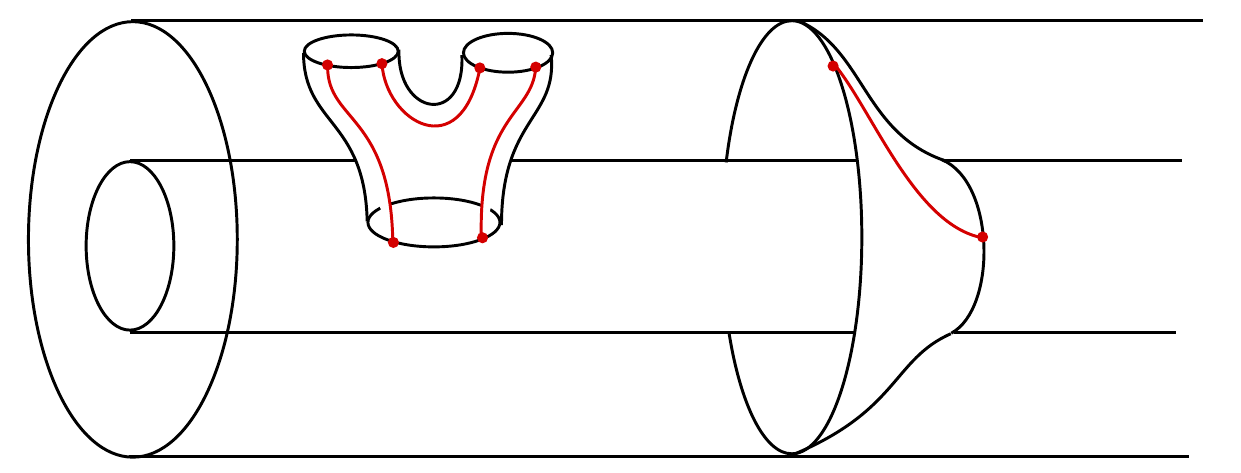}}
     \put(10,33){$h$}
    \put(10,20){$k$}
         \put(44.5,34){$\M(E_X,\ss,h)$}
        \put(41,18){$\M(E_X,\ss,k)$}
        
    \end{Overpic}
    \caption{The cobordism in $\CP^\infty \times \S^1 \times I$.}
    \label{fig:infinite-cobordism}
\end{figure}
\end{proof}

\begin{theorem}\label{main-thm}
$SW^\Theta(X,\ss)=SW(X',\ss')$.
\end{theorem}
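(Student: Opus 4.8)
The plan is to feed the two fibre-product descriptions of Corollary~\ref{fiberProduct} into the explicit moduli spaces of the elementary pieces and thereby reduce both sides of the claimed identity to the degree of a single map. Write $S^1:=\M(\S^1\times\S^2,g_{bullet})$, which by Proposition~\ref{prop:dimOfReducible} is a circle consisting entirely of reducibles, and let $\partial_\infty\colon\M(X_0)\to S^1$ denote the asymptotic-value map; here $\M(X_0)$ is the moduli space of finite-energy $\hat{\ss}$-monopoles on $X_0$ entering the gluing, a smooth oriented $1$-manifold by Propositions~\ref{prop:virtualDimX0} and~\ref{prop:noncompactTransversality}, closed because $\M(X_0)\cong\M(X)$ (shown below). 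The hypotheses of Theorem~\ref{thm:ggt} hold by Propositions~\ref{prop:triObs} and~\ref{prop:trivialObs}, exactly as in Corollary~\ref{fiberProduct}.

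First I would pin down the restriction maps of the two caps. Since $H^1(\S^1\times D^3;\R/\Z)\to H^1(\S^1\times\S^2;\R/\Z)$ is an isomorphism, the induced map $\partial_\infty\colon\M(\S^1\times D^3)\to\M(\S^1\times\S^2)=S^1$ is a diffeomorphism of circles, so the projection onto the first factor identifies the fibre product $\M(X)\cong\M(X_0)$. On the other side, $\M(D^2\times\S^2)$ is a single reducible point by Proposition~\ref{prop:dimOfReducible}, and by Proposition~\ref{prop:spincRestriction} its image $p_0\in S^1$ is a single point (the trivial flat connection); hence $\M(X')\cong\M(X_0)\times_{S^1}\{p_0\}=\partial_\infty^{-1}(p_0)$. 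Since $\dim\M(X')=0$ by the Atiyah--Singer index theorem on $X'$ (using that $c_1(\L')^2=c_1(\L)^2$ by Theorem~\ref{thm:changeOfSpinc}), $p_0$ is a regular value of $\partial_\infty$, so $\partial_\infty^{-1}(p_0)$ is a finite set of signed points and
\[
SW(X',\ss')\;=\;\#\,\partial_\infty^{-1}(p_0)\;=\;\deg\bigl(\partial_\infty\colon\M(X_0)\to S^1\bigr).
\]

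Next I would identify the left-hand side with the same degree. Because the restriction $H^1(X;\Z)\to H^1(X_0;\Z)$ is an isomorphism, the homotopy equivalence $\hat{\mathcal{B}}_X\simeq\hat{\mathcal{B}}_{X_0}$ carries the $S^1$-bundle projection defining $\Theta$ to the Picard projection of $\hat{\mathcal{B}}_{X_0}$; and since $\partial_\infty$ restricts to an isomorphism on the Picard circles while $\M(\S^1\times\S^2)=S^1$ is precisely the Picard circle of $\S^1\times\S^2$, the class $\Theta$ restricts on $\M(X)\cong\M(X_0)$ to $\partial_\infty^{*}u$, where $u$ generates $H^1(\M(\S^1\times\S^2);\Z)=H^1(S^1;\Z)$. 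Hence
\[
SW^\Theta(X,\ss)\;=\;\langle\Theta,[\M(X)]\rangle\;=\;\langle\partial_\infty^{*}u,[\M(X_0)]\rangle\;=\;\deg\bigl(\partial_\infty\colon\M(X_0)\to S^1\bigr),
\]
and comparing the two displays proves the theorem.

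The step I expect to be the main obstacle is the bookkeeping of orientations implicit in asserting that the two displayed degrees are equal as signed integers. One must check that the gluing diffeomorphisms of Theorem~\ref{thm:ggt} carry the homology orientations of $\M(X)$ and $\M(X')$ to the orientations of the fibre products assembled from the cylindrical-end moduli spaces of $X_0$, $\S^1\times D^3$, $D^2\times\S^2$ and $\S^1\times\S^2$ --- this is where one invokes Nicolaescu's orientation conventions together with the facts that $\partial_\infty\colon\M(\S^1\times D^3)\to S^1$ is orientation-preserving and that $\M(D^2\times\S^2)$ is a positively oriented point --- and that the orientation so induced on $\partial_\infty^{-1}(p_0)$, as the transverse fibre of $\partial_\infty$, matches both the one used to compute $\deg(\partial_\infty)$ and the homology orientation defining $SW(X',\ss')$. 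Once these signs are reconciled, the two displays complete the argument; a minor secondary point, already contained in the transversality results invoked for Corollary~\ref{fiberProduct}, is that $p_0$ is genuinely a regular value of $\partial_\infty$.
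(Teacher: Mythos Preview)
Your proposal is correct and follows essentially the same route as the paper: both reduce the identity to the mapping degree of $\partial_\infty\colon\M(X_0)\to\M(\S^1\times\S^2)=S^1$, using that the $\S^1\times D^3$ cap restricts by a diffeomorphism of circles (so $\M(X)\cong\M(X_0)$) while the $D^2\times\S^2$ cap restricts to a single point (so $\M(X')\cong\partial_\infty^{-1}(p_0)$), and then identifying $\Theta$ with the pullback of the generator of $H^1(S^1)$. The paper writes $\M(X_0)=\coprod_i S^1_i$ and sums the individual degrees $d_i$, whereas you work with the total degree directly; this is purely cosmetic.

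One point of presentation: the sentence ``Since $\dim\M(X')=0$ \ldots\ $p_0$ is a regular value of $\partial_\infty$'' has the implication the wrong way round --- knowing the virtual dimension is $0$ does not by itself make $p_0$ regular. The paper obtains regularity (in fact, that $\partial_\infty$ is a submersion everywhere) from strong regularity $H^2(F_{\hat{\mathsf{C}}_0})=0$ via the tail of the long exact sequence~\eqref{equ:longExact}, exactly the mechanism you flag at the end as ``already contained in the transversality results.'' You should make that deduction explicit rather than inferring it from the index count.
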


\begin{proof}
Since each of $\M(\S^1\times \S^2)$ and $\M(\S^1\times D^3)$ is a circle of reducibles, and these circles are given by the monodromy of connections around their $\S^1$ factor, it's clear that
\[
\partial_\infty: \M(\S^1\times D^3) \to \M(\S^1\times \S^2)
\]
is identity. By Corollary \ref{fiberProduct}, $\M(X)\cong\M(X_0)$.

For $\M(X_0)$, $\partial_\infty:\M(X_0) \to \M(\S^1\times \S^2)$ is not necessarily a homeomorphism, but we can prove that this map is a submersion. Recall that we have choosen a generic perturbation $\eta$ such that $\M(X_0)=\M(X_0,\eta)$ contains only strongly regular points. By the long exact sequence \ref{equ:longExact}:
\[
\cdots
\stackrel{\phi}{\to} H^1_{\hat{\mathsf{C}}_1} = T_{\hat{\mathsf{C}}_1} \M(X_0)
\to H^1(B)=T_{\partial_\infty\hat{\mathsf{C}}_1} \M^{\partial_\infty X_0}(\S^1\times \S^2)
\to H^2(F) =0
\to \cdots
\]
where 
\[
\M^{\partial_\infty X_0}(\S^1\times \S^2) = \mathcal{Z}(\S^1\times \S^2)/\partial_\infty\widehat{\mathcal{G}}_1= \mathcal{Z}(\S^1\times \S^2)/{\mathcal{G}} = \M(\S^1\times \S^2)
\]
(since $H^1(X_0) \stackrel{i^*}{\to} H^1(\S^1\times \S^2) $ is surjective), 
\[
\partial_\infty:\M(X_0) \to \M(\S^1\times \S^2)
\]
is a submersion.

By compactness result, $\M(X_0)$ is a disjoint union of finite many circles, say
$\amalg_{i\in\Gamma} \S^1_i $. Let $d_i$ be the mapping degree of $\partial_\infty|_ {\S^1_i} :  \S^1_i  \to  \M(\S^1\times \S^2) = \S^1$. We claim that 
\[
SW(X,\Theta) = \sum_{i\in \Gamma} d_i.
\]

Let
\[
\hat{\mathfrak{N}}_i :=  \S^1_i \times_{\M(\S^1\times \S^2)} \M(\S^1\times D^3)  \subset \hat{\mathfrak{N}}
\]
be the space of configurations obtained by gluing $\S^1_i$ and $\M(\S^1\times D^3)$. Consider the pullback diagram of moduli spaces:
\begin{equation}
\xymatrix@C=4ex{
\M(X) \subset & \mathcal{B}_X =   \C P^\infty_+\times \S^1 \ar@<-7ex>[d]^{p_1}  \ar[r]^{p_2} & \mathcal{B}_{\S^1\times D^3} = \C P^\infty_+\times \S^1 \ar@<-7ex>[d]^{\partial_\infty^2}  &\supset \M(\S^1\times D^3)=  \{0\} \times\S^1 \ar@<8.3ex>[d]^\cong \\
 \S^1_i \subset & \mathcal{B}_{X_0} =  \C P^\infty_+\times \S^1 \ar[r]^{\partial_\infty^1}   & \mathcal{B}_{\S^1\times \S^2}= \C P^\infty_+\times \S^1  & \supset \M(\S^1\times \S^2) = \{0\} \times\S^1
}
\end{equation}
When restricted to $\S^1$-factors, $\partial_\infty^1$ and $\partial_\infty^2$ are identity maps of $\S^1$, so $p_1$ and $p_2$ are identity maps of $\S^1$. Therefore, $\hat{\mathfrak{N}}_i $ winds around the $\S^1$-factor of $\mathcal{B}_X$ by $d_i$ times. So
\[
\langle [\hat{\mathfrak{N}}_i], \Theta\rangle = d_i.
\]
By Theorem \ref{thm:ggt}, $\M(X)$ is isotopic to $\hat{\mathfrak{N}}$ in $ \mathcal{B}_X $, so 
\[
 \langle[ {\M}(X) ],\Theta \rangle = \sum_{i\in \Gamma} d_i.
 \]

On the other hand, 
\[
\partial_\infty: \M(D^2\times S^2) \to \M(\S^1\times \S^2)
\]
is the inclusion of one point. Thus we have
\begin{equation}
\xymatrix@C=4ex{
\M(X') \subset & \mathcal{B}_{X'}=   \C P^\infty_+\times \S^1 \ar@<-7ex>[d]^{p_1}  \ar[r]^{p_2} & \mathcal{B}_{D^2\times S^2} = \C P^\infty_+ \ar@<-7ex>[d]^{\partial_\infty^2}  &\supset \M(D^2\times S^2)=  \{0\}  \ar@<9ex>[d] \\
 \S^1_i \subset & \mathcal{B}_{X_0} =  \C P^\infty_+\times \S^1 \ar[r]^{\partial_\infty^1}   & \mathcal{B}_{\S^1\times \S^2}= \C P^\infty_+\times \S^1  & \supset \M(\S^1\times \S^2) = \{0\} \times\S^1
}
\end{equation}
Since $\partial_\infty^1|_{ \S^1_i}$ is a submersion, $\hat{\mathfrak{N}}_i :=  \S^1_i \times_{\M(\S^1\times \S^2)} \M(D^2\times S^2)$ contains $d_i$ points. Again by Theorem \ref{thm:ggt}, $\M(X')$ is isotopic to $\hat{\mathfrak{N}}$ in $ \mathcal{B}_{X'} $. So 
\[
SW(X') =  \sum_{i\in \Gamma} d_i = SW(X,\Theta).
\]
\end{proof}

\begin{remark}\label{rem:high-dim-moduli}
Theorem \ref{main-thm} works for $\dim \M(X) >1$ as long as it is odd. In that case we define $SW^\Theta(X,\ss)$ by 
\[
SW^\Theta(X,\ss) := \langle[ {\M}(X) ],\Theta \cup c_1(\C P^\infty)^n \rangle
\]
for $\dim \M(X) =2n+1$. Note that in this case $\dim \M(X') =2n$ and the ordinary invariant is 
\[
SW(X',\ss') := \langle[ {\M}(X') ], c_1(\C P^\infty)^n \rangle.
\]
Hence for $\dim \M(X) >1$, the argument of Theorem \ref{main-thm} follows from a similar proof.
\end{remark}
\section{Applications}

\subsection{Exotic smooth structures on nonsimply connected manifolds}
First observe that by definition and Lemma \ref{lem:detect-exotic}, the cut-down invariant also detects exotic smooth structures. As lots of exotic smooth structures are detected by $SW$, we can now generalize those results to nonsimply connected manifolds by the surgery formula:
\begin{theorem}\label{thm:not-diffeomorphic}
Suppose $X_1$, $X_2$ are two simply connected smooth $4$-manifolds with $b^+_2(X_i)>1$. Suppose $\ss _1$ is a $\text{spin}^c$-structure on $X_1$, such that for any $\text{spin}^c$-structure $\ss_2$ of $X_2$, 
\[
SW(X_1, \ss_1) \neq SW(X_2, \ss_2).
\]
Then $X_1\# (\S^1\times \S^3)$ is not diffeomorphic to $X_2\# (\S^1\times \S^3)$.
\end{theorem}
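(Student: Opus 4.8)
The plan is to exhibit $X_i$ as the result of a $1$-surgery on $Y_i:=X_i\#(\S^1\times\S^3)$ along a canonical loop, and then to push a hypothetical diffeomorphism $Y_1\cong Y_2$ through the surgery formula (Theorem \ref{main-thm}) to contradict the hypothesis. First I would fix the surgery picture. Let $\gamma_i=\S^1\times\{pt\}$ be the core of the $\S^1\times\S^3$ summand of $Y_i$; since $X_i$ is simply connected, $\gamma_i$ generates $H_1(Y_i;\Z)=\Z$. A small tubular neighborhood of $\gamma_i$ is the copy of $\S^1\times D^3$ inside $\S^1\times\S^3$, whose complement in $\S^1\times\S^3$ is again $\S^1\times D^3$; hence the surgery replaces the $\S^1\times\S^3$ summand by $(\S^1\times D^3)\cup_{\S^1\times\S^2}(D^2\times\S^2)=\S^4$, so it returns $X_i$. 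The two framings differ by the nontrivial loop in $\pi_1(SO(3))$, which extends (linearly) over the $\S^1\times D^3$ complement, so both framings give $X_i$; this is consistent with the framing-independence in Theorem \ref{thm:changeOfSpinc}. Next I would record the $\mathrm{Spin}^c$ bookkeeping: $H^2(Y_i;\Z)\cong H^2(X_i;\Z)$ (K\"unneth), and by Theorem \ref{thm:changeOfSpinc} the operation ``restrict to $X_{i,0}$, then extend'' is an $H^2$-equivariant bijection $\mathcal{S}(Y_i)\to\mathcal{S}(X_i)$ that preserves $c_1(\L)^2$; since $\chi(Y_i)=\chi(X_i)-2$ and $\sigma(Y_i)=\sigma(X_i)$, a $\mathrm{Spin}^c$ structure $\ss'$ on $Y_i$ satisfies $\dim\M(Y_i,\ss')=\dim\M(X_i,\ss_i)+1$, where $\ss_i$ is the corresponding structure on $X_i$.

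Now argue by contradiction: assume $f:Y_1\to Y_2$ is a diffeomorphism. It induces a bijection $f^*:\mathcal{S}(Y_2)\to\mathcal{S}(Y_1)$ and acts as $\pm\mathrm{id}$ on $H^1(\;\cdot\;;\Z)=\Z$, so after choosing the generators $\Theta_i$ (pullbacks of generators of $H^1(\S^1)$) compatibly we may assume $f^*\Theta_2=\Theta_1$. Since $b^+(Y_i)=b^+(X_i)>1$, Lemma \ref{lem:detect-exotic} and its evident analogue for the class $\Theta\cup c_1(\CP^\infty)^n$ make the cut-down invariants parameter-independent, so $f$ gives $SW^{\Theta_1}(Y_1,f^*\ss')=SW^{\Theta_2}(Y_2,\ss')$ for all $\ss'\in\mathcal{S}(Y_2)$, and likewise with the extra $c_1(\CP^\infty)^n$ factor. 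Take $\ss_1$ as in the hypothesis. Since a $4$-manifold with $b^+>1$ has only finitely many Seiberg--Witten basic classes, $SW(X_2,\ss_2)=0$ for all but finitely many $\ss_2$, so the hypothesis forces $SW(X_1,\ss_1)\neq0$; hence $d:=\dim\M(X_1,\ss_1)$ is even and $\geq0$. Let $\ss_1'\in\mathcal{S}(Y_1)$ correspond to $\ss_1$, so $\dim\M(Y_1,\ss_1')=d+1$ is odd; let $\ss_2':=(f^*)^{-1}\ss_1'\in\mathcal{S}(Y_2)$, and let $\ss_2\in\mathcal{S}(X_2)$ correspond to $\ss_2'$, so $\dim\M(X_2,\ss_2)=d$ (moduli-space dimension is diffeomorphism invariant). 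Applying the surgery formula on both sides — Theorem \ref{main-thm} when $d=0$, Remark \ref{rem:high-dim-moduli} when $d>0$ — together with the invariance above yields
\[
SW(X_1,\ss_1)=SW^{\Theta_1}(Y_1,\ss_1')=SW^{\Theta_2}(Y_2,\ss_2')=SW(X_2,\ss_2),
\]
contradicting the hypothesis, and proving $X_1\#(\S^1\times\S^3)\not\cong X_2\#(\S^1\times\S^3)$.

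The Seiberg--Witten content is fully supplied by Theorem \ref{main-thm} and Lemma \ref{lem:detect-exotic}, so the substantive work is topological-bookkeeping: verifying that the surgery on $\gamma_i$ really recovers $X_i$ for either framing, and that the $\mathrm{Spin}^c$-structure correspondence is a genuine bijection respecting moduli-space dimensions (both routine from Section \ref{subsection:Spin}). The one point that needs care, and which I regard as the main subtlety rather than a real obstacle, is the orientation/sign accounting: $SW$ and $SW^\Theta$ are defined only after a choice of homology orientation (and, for $SW^\Theta$, an orientation of the circle factor), so the displayed equalities are to be understood with compatible choices, and the hypothesis $SW(X_1,\ss_1)\neq SW(X_2,\ss_2)$ interpreted accordingly (e.g.\ for absolute values, or for the full multiset of basic classes). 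With that understood, the argument goes through verbatim.
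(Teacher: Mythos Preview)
Your proof is correct and follows essentially the same route as the paper: lift $\ss_1$ to $\ss_1'$ on $Y_1$, push it through a putative diffeomorphism $f:Y_1\to Y_2$, and then apply the surgery formula (Theorem \ref{main-thm} / Remark \ref{rem:high-dim-moduli}) on both sides together with the diffeomorphism invariance of $SW^\Theta$ (Lemma \ref{lem:detect-exotic}) to force $SW(X_1,\ss_1)=SW(X_2,\ss_2)$ for some $\ss_2$. You are in fact more careful than the paper on two points it leaves implicit: you observe (via finiteness of basic classes) that the hypothesis forces $SW(X_1,\ss_1)\neq 0$, hence $\dim\M(X_1,\ss_1)$ is even and the surgery formula is applicable; and you flag the orientation/sign bookkeeping, which the paper does not address.
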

\begin{proof}
Let $\ss_i'$ be the $\text{spin}^c$-structure of $X_i\# (\S^1\times \S^3)$ such that $\ss_i'$ coincides with $\ss_i$ on the common part. Then by Remark \ref{rem:high-dim-moduli}, 
\[
SW^\Theta(X_1\# (\S^1\times \S^3), \ss_1') \neq SW^\Theta(X_2\# (\S^1\times \S^3), \ss_2').
\]
If there exists a diffeomorphism $f:X_1\to X_2$, by Lemma \ref{lem:detect-exotic}, we have
\[
SW^\Theta(X_1\# (\S^1\times \S^3), \ss_1') = SW^\Theta(X_2\# (\S^1\times \S^3), f(\ss_1')).
\]
Since $H^2(X_2;\Z)\cong H^2(X_2\# (\S^1\times \S^3);\Z)$, there exists a $\text{spin}^c$-structure $\ss_2$ on $X_2$ such that $f(\ss_1') = \ss_2'$. This contradicts the inequality.
\end{proof}

Therefore, we have a lot of exotic nonsimply connected manifolds, for example:
\begin{corollary}
Suppose that $b^+(X) > 1$ and $\pi_1(X) = 1 =\pi_1(X - T )$ where $T$ is a homologically nontrivial torus of self-intersection $0$. Suppose that there exists a $\text{spin}^c$-structure $\ss$ on $X$ such that $SW(X,\ss) \neq 0$. Then $X\# S^1 \times S^3$ admits infinitely many exotic smooth structures. In particular, for the elliptic surface $E(n)$ with $n> 1$, the nonsimply connected manifold $E(n) \# S^1 \times S^3$ admits infinitely many exotic smooth structures.
\end{corollary}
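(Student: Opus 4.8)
The plan is to combine the surgery formula (Theorem~\ref{main-thm}, together with Remark~\ref{rem:high-dim-moduli} and the diffeomorphism invariance of Lemma~\ref{lem:detect-exotic}, as already packaged in Theorem~\ref{thm:not-diffeomorphic}) with Fintushel--Stern knot surgery along the torus $T$. First, since $\pi_1(X\setminus T)=1$ and $[T]^2=0$, for every knot $K\subset S^3$ the knot surgery along $T$ produces a closed smooth $4$-manifold $X_K$ which is simply connected (van Kampen) and has the same homology and intersection form as $X$ (the homological input of the surgery, $T^2\times D^2\rightsquigarrow (S^3\setminus\nu K)\times S^1$, is homologically invisible), hence the same $b^+>1$; being smooth and simply connected with the same intersection form, $X_K$ is homeomorphic to $X$ by Freedman's theorem. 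Moreover $SW_{X_K}=SW_X\cdot\Delta_K(t^2)$ where $t=\exp(2\,\mathrm{PD}[T])$ (assumed of infinite order, as it is for the fiber of an elliptic surface), and $SW_X\ne0$ by hypothesis; in particular $X_K$ again satisfies the hypotheses of Theorem~\ref{thm:not-diffeomorphic}, and $X_K\#(\S^1\times\S^3)$ is homeomorphic to $X\#(\S^1\times\S^3)$.

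Second, I would choose a sequence of knots $K_1,K_2,\dots$ so that the invariants $SW_{X_{K_i}}$ are pairwise separated in the precise value-by-value form that Theorem~\ref{thm:not-diffeomorphic} requires: for every $i\ne j$ there must be a $\mathrm{spin}^c$ structure $\ss_i$ on $X_{K_i}$ with $SW(X_{K_i},\ss_i)\ne SW(X_{K_j},\ss)$ for all $\ss$ on $X_{K_j}$. Concretely, pick $d$ so that neither $2d\,\mathrm{PD}[T]$ nor $4d\,\mathrm{PD}[T]$ is a difference of two basic classes of $X$ (possible since $\mathrm{PD}[T]$ is non-torsion and $X$ has finitely many basic classes), and let $K_i$ be a knot with $\Delta_{K_i}(t)=1+q_i(t^{d}-2+t^{-d})$ for a strictly increasing sequence of positive integers $q_i$ (such knots exist by Seifert's realization theorem, as this Laurent polynomial is symmetric and equals $1$ at $t=1$). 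Then the three summands of $SW_X\cdot\Delta_{K_i}(t^2)=SW_X+q_i(t^{2d}SW_X-2SW_X+t^{-2d}SW_X)$ have pairwise disjoint supports, so the set of values of $SW_{X_{K_i}}$ is $\{c,\ -2q_ic,\ q_ic : c\ \text{a value of}\ SW_X\}$; its largest absolute value is $2q_i\lVert SW_X\rVert_\infty$, which is strictly increasing in $i$. Hence for $i>j$ the extremal value of $SW_{X_{K_i}}$ exceeds in absolute value every value of $SW_{X_{K_j}}$, which supplies the required $\ss_i$.

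Third, applying Theorem~\ref{thm:not-diffeomorphic} to each pair $(X_{K_i},X_{K_j})$ shows that the manifolds $\{X_{K_i}\#(\S^1\times\S^3)\}_{i\ge1}$ are pairwise non-diffeomorphic; since they are all homeomorphic to $X\#(\S^1\times\S^3)$, the latter admits infinitely many exotic smooth structures. For the last assertion it remains to verify that $E(n)$ with $n>1$ meets the hypotheses: it is simply connected with $b^+=2n-1>1$, its generic fiber $F$ is a torus with $[F]^2=0$, $[F]\ne0$ non-torsion, and $\pi_1(E(n)\setminus F)=1$, while $SW_{E(n)}\ne0$ for $n\ge2$ (one has $SW_{K3}=1$, and for $n>2$ the fiber-sum formula gives $SW_{E(n)}$ the leading coefficient $\pm1$).

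I expect the main obstacle to be the second step: ensuring that the chosen family of knot surgeries separates the Seiberg--Witten invariants in the strong, value-by-value form demanded by Theorem~\ref{thm:not-diffeomorphic} (not merely making the invariants distinct as functions on $\mathrm{Spin}^c$), and in particular controlling the supports and ruling out cancellation in the convolution $SW_X\cdot\Delta_{K_i}(t^2)$ — the ``large shift $d$'' choice above is designed precisely to make the three shifted copies of $SW_X$ disjoint so that no cancellation occurs. A secondary, more routine point is the verification, via van Kampen and the homological computation for knot surgery, that each $X_{K_i}$ is simply connected and homeomorphic to $X$, so that Theorem~\ref{thm:not-diffeomorphic} applies and the resulting manifolds all share the topological type of $X\#(\S^1\times\S^3)$.
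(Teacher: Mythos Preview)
Your approach is essentially the same as the paper's: both apply Fintushel--Stern knot surgery along $T$ to produce an infinite family $\{X_{K_i}\}$ homeomorphic to $X$ whose maximal Seiberg--Witten values are pairwise distinct, and then invoke Theorem~\ref{thm:not-diffeomorphic}. Your version is in fact more careful than the paper about controlling cancellation in the convolution $SW_X\cdot\Delta_K(t^2)$ via the large-shift trick; just note the minor slip that the summands $SW_X$ and $-2q_i\,SW_X$ share the same support, so after combining them the value set is $\{(1-2q_i)c,\ q_i c : c\ \text{a value of}\ SW_X\}$ with largest absolute value $(2q_i-1)\lVert SW_X\rVert_\infty$ --- still strictly increasing in $i$, so the argument goes through unchanged.
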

\begin{proof}
For such $X$, Fintushel-Stern knot surgery theorem (see \cite{fintushel1997knotslinks4manifolds}, as well as their lecture notes \cite{fintushel2007lectures4manifolds} Lecture 3) says for any knot $K\subset \S^3$, there exists a manifold $X_K$ homeomorphic to $X$ and 
\[
\max_{n\in \Z}\{SW(X_K,\ss+n[T])\}
\]
depends on the largest coefficient of the Alexander polynomial of $K$. Any symmetric Laurent polynomial whose coefficient sum is $\pm 1$ is the Alexander polynomial of some knot. Hence the set 
\[
\{\max_{n\in \Z}\{SW(X_K,\ss+n[T])\}, K \text{ is a knot in }\S^3\}
\]
is infinite, and therefore we have an infinite family of manifolds that are homeomorphic to $X$ and satisfy the conditions of Theorem \ref{thm:not-diffeomorphic}.
\end{proof}

\subsection{Adjunction formula for odd-dimensional moduli space}
The generalized adjunction formula (\cite{KM94surfaces} and \cite{OS00}) gives a lower bound of the genus of a surface in a smooth $4$-manifold $X$, by Seiberg-Witten basic classes. A characteristic element $K\in H^2(X;\Z)$ is a Seiberg-Witten basic class of $X$ if $SW(X,K)\neq 0$. We can generalize this concept and the adjunction formula to odd dimensional moduli space.

\begin{definition}\label{def:generalized-basic-class}
If $H^1(X;\Z) \cong \Z$, a characteristic element $K\in H^2(X;\Z)$ is a Seiberg-Witten basic class of $X$ if:
\begin{enumerate}
\item[\textbullet] $SW(X,K)\neq 0$ for $\dim \M(X,K) =2n$;
\item[\textbullet]  $SW^\Theta(X,K)\neq 0$ for $\dim \M(X,K) =2n+1$.
\end{enumerate}
If $H^1(X;\Z) \neq \Z$, define the Seiberg-Witten basic class of $X$ as usual.
\end{definition}

We can also generalize the concept of the simple type:
\begin{definition}
A simply connected $4$-manifold $X$ is of simple type if each basic class $K$ satisfies $\dim \M(X,K) =0$. A $4$-manifold $X$ with $H^1(X;\Z) \cong \Z$ is of simple type if each basic class $K$ satisfies $\dim \M(X,K) = 1$.
\end{definition}

The following theorem has the same form as the generalized adjunction formula, but with our generalization of the basic class, the following formula will give more infomation for the nonsimply connected manifolds:
\begin{theorem}[Generalized adjunction formula]
Suppose that $\Sigma$ is an embedded, oriented, connected, homologically nontrivial surface in $X$ with genus $g(\Sigma)$ and self-intersection $[\Sigma]^2 \ge 0$. Then for every Seiberg-Witten basic class $K\in H^2(X;\Z)$, we have
\[
2g(\Sigma) -2 \ge [\Sigma]^2 + |K([\Sigma])|.
\]
If $X$ is of simple type and $g(\Sigma)>0$, then the same inequality holds without requiring $[\Sigma]^2 \ge 0$.
\end{theorem}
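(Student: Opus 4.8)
The plan is to reduce everything to the classical generalized adjunction inequality (\cite{KM94surfaces}, \cite{OS00}) by means of the surgery formula, splitting into cases according to Definition~\ref{def:generalized-basic-class}. If $H^1(X;\Z)\not\cong\Z$, or if $H^1(X;\Z)\cong\Z$ but $\dim\M(X,K)$ is even, then $K$ being a basic class means $SW(X,K)\neq 0$ in the ordinary sense, and the desired inequality is precisely the classical adjunction inequality applied to $X$ itself (the classical statement imposes no condition on $b_1$). So the only genuinely new case is $H^1(X;\Z)\cong\Z$, $\dim\M(X,K)=2n+1$ odd, and $SW^\Theta(X,K)\neq 0$; the rest of the argument treats this case.

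First I would choose a loop $\gamma\subset X$ representing a generator of $H_1(X;\Z)/\mathrm{torsion}\cong\Z$ with $\langle\alpha,\gamma\rangle=1$. After isotoping $\gamma$ to be disjoint from $\Sigma$ (possible since $1+2<4$) and shrinking its tubular neighborhood $N=\S^1\times D^3$, we may assume $\Sigma\subset X_0:=X\setminus N$. Let $X'=X_0\cup_{\S^1\times\S^2}(D^2\times\S^2)$ be the result of the surgery, so $H^1(X';\Z)=0$; let $\ss'$ be the unique $\mathrm{Spin}^c$ structure on $X'$ extending $\ss|_{X_0}$ (Theorem~\ref{thm:changeOfSpinc}), and put $K'=c_1(\L')$, which is automatically characteristic. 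By the surgery formula (Theorem~\ref{main-thm} together with Remark~\ref{rem:high-dim-moduli}) we obtain $SW(X',K')=SW^\Theta(X,K)\neq 0$ with $\dim\M(X',K')=2n$, so $K'$ is a Seiberg--Witten basic class of $X'$ in the classical sense; moreover $b^+(X')=b^+(X)$, since $H^2(X')\cong H^2(X_0)\cong H^2(X)$ with matching intersection forms (Theorem~\ref{thm:changeOfSpinc} and the proof of Proposition~\ref{prop:virtualDimX0}).

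Next I would transfer the data of $\Sigma$ from $X$ to $X'$. As the surgery is supported in $N$, disjoint from $\Sigma$, the surface $\Sigma$ lies in $X_0\subset X'$. Exactly as in the proof of Theorem~\ref{thm:changeOfSpinc}, the class $[\{pt\}\times\S^2]\in H_2(\partial X_0;\Z)$ bounds a $3$-manifold in $X_0$ (because $\langle\alpha,\gamma\rangle=1$); feeding this into the Mayer--Vietoris sequences of $X=X_0\cup N$ and of $X'=X_0\cup(D^2\times\S^2)$ shows that both $H_2(X_0;\Z)\to H_2(X;\Z)$ and $H_2(X_0;\Z)\to H_2(X';\Z)$ are injective, so $[\Sigma]\neq 0$ in $H_2(X';\Z)$. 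The self-intersection $[\Sigma]^2$ is computed by pushing $\Sigma$ off itself inside $X_0$, hence agrees in $X$ and in $X'$; in particular $[\Sigma]^2\ge 0$ in $X'$. Finally $\ss'|_{X_0}=\ss|_{X_0}$ gives $K'|_{X_0}=K|_{X_0}$, so $\langle K',[\Sigma]\rangle_{X'}=\langle K|_{X_0},[\Sigma]\rangle=\langle K,[\Sigma]\rangle_X$. Applying the classical generalized adjunction formula to $(X',\Sigma,K')$ then gives
\[
2g(\Sigma)-2\ \ge\ [\Sigma]^2+|K'([\Sigma])|\ =\ [\Sigma]^2+|K([\Sigma])|,
\]
as claimed. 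For the simple-type statement: by the bijection $\mathcal{S}(X)\to\mathcal{S}(X')$, $\ss\mapsto\ss'$ of Theorem~\ref{thm:changeOfSpinc} and the surgery formula, the basic classes of $X$ correspond to those of $X'$ with $\dim\M$ dropping by $1$, so $X$ is of simple type precisely when $X'$ is of classical simple type; one then repeats the reduction and invokes the simple-type form of the classical inequality in $X'$, which requires only $g(\Sigma)>0$. The case $H^1(X;\Z)\not\cong\Z$ is again classical.

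The step I expect to be the main obstacle is the homological bookkeeping of the third paragraph: verifying that the surgery genuinely leaves $[\Sigma]$, $[\Sigma]^2$, and $K([\Sigma])$ untouched. The crux is the injectivity of $H_2(X_0;\Z)\to H_2(X;\Z)$ and $H_2(X_0;\Z)\to H_2(X';\Z)$, which hinges on $[\{pt\}\times\S^2]$ bounding in $X_0$, i.e. on $\langle\alpha,\gamma\rangle=1$, the same mechanism already exploited in the proof of Theorem~\ref{thm:changeOfSpinc}. Beyond this, the argument only uses the already-established surgery formula and a citation of the classical adjunction inequality.
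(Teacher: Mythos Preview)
Your proposal is correct and follows essentially the same approach as the paper: reduce to the classical adjunction inequality by performing the surgery, transferring $\Sigma$ into $X'$, and invoking the surgery formula to see that $K'$ is a basic class there. Your treatment is in fact more explicit than the paper's in several places (checking $[\Sigma]\neq 0$ in $H_2(X';\Z)$ via the Mayer--Vietoris injectivity, and noting $b^+(X')=b^+(X)$), where the paper simply asserts ``It's easy to check that $\Sigma'$ satisfies all requirements.'' One small overstatement: you write that $X$ is of simple type \emph{precisely} when $X'$ is of classical simple type, but the surgery formula only directly gives the forward implication (which is all that is needed here), since for even-dimensional $\M(X,K)$ there is no surgery formula relating $SW(X,K)$ to an invariant of $X'$.
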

\begin{proof}
Let $K\in H^2(X;\Z)$ be a basic class. If $\dim \M(X,K)$ is even, the theorem follows from the adjunction formula for ordinary Seiberg-Witten invariant. 

Now suppose $\dim \M(X,K)$ is odd. If $H^1(X;\Z) \neq \Z$, then by the definition of the ordinary Seiberg-Witten invariant, $SW(X,K)= 0$. So $K$ is not a basic class. If $H^1(X;\Z) \cong \Z$, then by Definition \ref{def:generalized-basic-class} we have $SW^\Theta(X,K)\neq 0$. Let $\gamma$ be a loop in $X$ which is sent to $1$ by a generator of $H^1(X;\Z)$. Denote the resulting manifold after a surgery along $\gamma$ by $X'$. 

Now since $\gamma$ is $1$-dimensional, and $\Sigma$ is $2$-dimensional, we can assume that $\Sigma$ and a neighborhood $\S^1\times D^3$ of $\gamma$ are disjoint in $X$, so the surgery doesn't change this surface (just as what we did in the proof of Theorem \ref{thm:changeOfSpinc}). Denote the resulting surface by $\Sigma' \subset X'$. The surgery would not change the self-intersection of this surface (see the last part of the proof of Proposition \ref{prop:virtualDimX0}), so $[\Sigma]^2 =[\Sigma']^2$. By two Mayer-Vietoris sequences of cohomology groups (they are in the proof of Theorem \ref{thm:changeOfSpinc} and Proposition \ref{prop:virtualDimX0}), there exists an isomorphism $H^2(X;\Z) \to H^2(X';\Z) $. Let $K'$ be the image of $K$ under this isomorphism. Then by Remark \ref{rem:high-dim-moduli} we have 
\[
SW^\Theta(X,K)=SW(X',K') \neq 0.
\]
Hence $K'$ is a basic class on $X'$. Note that the dual of $K$ and the dual of $K'$ are the same surface in $X_0 := X - \S^1\times D^3 = X' - D^2 \times \S^2$. Also $\Sigma$ and $\Sigma'$ are the same surface in $X_0$. Hence 
\[
|K'([\Sigma'])| = |K'([\Sigma'])|.
\]
It's easy to check that $\Sigma'$ satisfies all requirements of the adjunction formula for ordinary Seiberg-Witten invariant. Hence
\[
2g(\Sigma') -2 \ge [\Sigma']^2 + |K'([\Sigma'])|.
\]
Therefore we have
\[
2g(\Sigma) -2 \ge [\Sigma]^2 + |K([\Sigma])|.
\]

Lastly we prove that if $X$ is of simple type and $g(\Sigma)>0$, then the same inequality holds without requiring $[\Sigma]^2 \ge 0$. When $X$ is simply connected, this follows from the adjunction formula for ordinary Seiberg-Witten invariant. When $H^1(X;\Z) \cong \Z$, by Theorem \ref{main-thm} and Remark \ref{rem:high-dim-moduli}, $X'$ is of simple type. Hence the statement follows from the adjunction formula for the simply connected $4$-manifold $X'$.
\end{proof}

\bibliographystyle{alpha}
\bibliography{./diff}
\end{document}